\theoremstyle{plain}
  \newtheorem{theorem}[subsection]{Theorem}
  \newtheorem{lemma}[subsection]{Lemma}
  \newtheorem{corollary}[subsection]{Corollary}
\theoremstyle{definition}
  \newtheorem{definition}[subsection]{Definition}
  \newtheorem{assumptions}[subsection]{Assumptions}
\newcommand{\msL}{\mspace{-90mu}}
\newcommand{\R}{{\bf R}}
\newcommand{\N}{{\bf N}}
\newcommand{\C}{{\bf C}}
\newcommand{\id}{\mathrm{d}}
\newcommand{\wt}[1]{\widetilde{#1}}
\newcommand{\wh}[1]{\widehat{#1}}
\newcommand{\psd}{pseudo-differential }
\newcommand{\summ}[3]{\overline{\wt{#1}}_{#2}^{\mspace{2mu}{#3}}}
\newcommand{\ba}{\widetilde{\alpha}}
\newcommand{\bb}{\widetilde{\beta}}
\newcommand{\bx}{\widetilde{x}}
\newcommand{\by}{\widetilde{y}}
\newcommand{\bm}{\widetilde{m}} 
\newcommand{\be}{\widetilde{\eta}}
\newcommand{\bxi}{\widetilde{\xi}}
\newcommand{\comp}{\#_{\textsc{kn}}}
\newcommand\Ident{\operatorname{id}}
\newcommand\Os{\operatorname{Os}-}
\renewcommand\Re{\operatorname{Re}\,}
\newcommand\expectation{{\bf E}}
\newcommand\probability{{\bf P}}
\newcommand\fourier{{\bf F}}
\newcommand{\intrd}{\int_{\R^d}}
\newcommand{\jb}[2][\psi]{\left\langle #2 \right\rangle_{#1}}
\newcommand{\lv}{\lesssim}
\newcommand{\gv}{\gtrsim}
\title[Constructing Markov processes using Feynman path integrals]{A pseudo-differential operator construction of Markov processes using Feynman path integrals}
\author{Alexander Potrykus}
\address{Department of Mathematics\\
Swansea University\\
Singleton Park\\
Swansea SA2 8PP\\
UK
}
\email{
 \href{mailto:A.K.K.Potrykus@swansea.ac.uk}{A.K.K.Potrykus@swansea.ac.uk}
}
\urladdr{\href{http://www-maths.swan.ac.uk/staff/akp}{http://www-maths.swan.ac.uk/staff/akp}
}
\subjclass[2000]{Primary: 35S10,47G30; Secondary: 60Jxx}
\keywords{Pseudo-differential operator, Feynman path integral, Negative definite function, Transition probability, Transiton function,  Markov Process, Time-inhomogeneous, Space-inhomogeneous, Fundamental Solution}
\begin{document}
\maketitle
\frenchspacing
\begin{abstract}
In this paper pseudo-differential operators with negative definite symbols are used to construct time- and space-inhomogeneous Markov processes. This is achieved by using the Markov evolution system associated with the fundamental solution of the corresponding pseudo-differential evolution equation. Negative definite symbols are non-standard and differ significantly from the class of H\"{o}rmander type symbols. The novelty of this work is the derivation and the representation of the fundamental solution as a Feynman path integral. This implies that the transition function of the constructed Markov process can be written as a pseudo-differential operator that has a Feynman path integral as its symbol. 
\end{abstract}

\section{Introduction}\label{secIntroduction}
We consider the parabolic pseudo-differential evolution equation
\begin{align}\label{eqPDE}
& \partial_t u(t;x)+a(t;x,D_x)u(t;x)=f(t;x)\qquad\text{for } 0\leq s<t\leq T\\
& u(s,x)=u_0(x),
\end{align}
where $u$ is a real valued function with $x\in \R^d$ and $f$ and $u_0$ belong to suitable function spaces. The operator $a(t;x,D_x)$ is pseudo-differential operator with negative definite symbol $a(t;x,\xi)$, defined on $\mathcal{S}(\R^d)$ by
\begin{align}\label{eqPSDO}
a(t;x,D_x)u(x)=\int_{\R^d}e^{ix\cdot \xi}a(t;x,\xi)\wh{u}(\xi)\text{\dj}\xi,
\end{align}
and $\text{\dj}\xi:=(2\pi)^{-\frac{d}{2}}\id\xi$. Recall that negative definite functions have a L\'evy-Khintchine representation, cf. \eqref{eqLevyKhintchine}. The purpose of this paper is to use the fundamental solution $e(t,s,x,D_x)$ of \eqref{eqPDE} to construct a time- and space-inhomogeneous Markov process $(X_t)_{t\geq 0}$ and to derive a formula for the transition function $p_{s,t}(x,A)$, $0\leq s\leq t<\infty$, $x\in \R^d$, $A\in\mathcal{B}(\R^d)$ in terms of the symbol $a(t;x,\xi)$. The symbol of the transition function can be written in terms of a Feynman path integral. For more information regarding mathematical approaches to Feynman path integrals cf. \cite{Albeverio-08,Johnson-00,Mazzucchi-09,Elworthy-84,Kolokoltsov-99,Fujiwara-80}. Note that negative definite functions do not fit into any of the classical symbol classes. Therefore standard theory cannot be applied.

It is possible to associate with a Markov process $(X_t)_{t\geq 0}$  a family of operators $(T_{s,t})_{,0\leq s\leq t<\infty}$ by setting
\begin{align}\label{eqEvolutionFamilyMarkov}
T_{s,t}f(x)=\expectation[f(X_t)|X_s=x]
\end{align}
for each $x\in \R^d$ and $f\in B_b(\R^d)$, the space of all bounded Borel measurable functions. 
\begin{definition}\label{definitionMarkovEvolutionFamily}
A family of operators $(T_{s,t})_{0\leq s\leq t<\infty}$ is called a Markov evolution family if it has the properties:
\begin{enumerate}
\item[a)] $T_{s,t}$ is a linear operator on $B_b(\R^d)$ for $0\leq s\leq t<\infty$,
\item[b)] $T_{s,s}=\Ident$ for $s\geq 0$,
\item[c)] $T_{r,s}\circ T_{s,t}=T_{r,t}$ for $0\leq r\leq s\leq t<\infty$,
\item[d)] $f\geq 0 \Rightarrow T_{s,t}f\geq 0$ for all $0\leq s\leq t<\infty$, $f\in B_b(\R^d)$,
\item[e)] $T_{s,t}$ is a contraction, i.e. $\|T_{s,t}\|\leq 1$ for $0\leq s\leq t<\infty$,
\item[f)] $T_{s,t}(1)=1$ for all $t\geq 0$.
\end{enumerate}
\end{definition}
It is easy to see that the family of operators given by \eqref{eqEvolutionFamilyMarkov} is a Markov evolution family in the above sense.
If $p_{s,t}(x,A)$ is the transition function of the process $(X_t)_{t\geq 0}$, then
\begin{align}
T_{s,t}f(x)=\int_{\R^d}f(y)p_{s,t}(x,\id y)
\end{align}
for $f\in B_b(\R^d)$ and $x\in \R^d$. Therefore
\begin{align}\label{eqTransitionFunction}
p_{s,t}(x,A)=T_{s,t}\chi_A(x)=\probability\big(X(t)\in A|X(s)=x\big).
\end{align}
If the Markov process has a transition density $p_{s,t}(x,y)$ then
\begin{align}
p_{s,t}(x,A)=\int_A p_{s,t}(x,y)\id y.
\end{align}
The reverse problem is now of interest: start with an operator $a(t;x,D_x)$ and show that the fundamental solution to \eqref{eqPDE} defines a Markov evolution family. In order to do this, it is necessary that the operator $a(t;x,D_x)$ has negative definite symbols. A family of operators $e(t,s;x,D_x)$ is called a fundamental solution of \eqref{eqPDE} if
\begin{align}
\begin{cases}
&\partial_t e(t,s;x,D_x)+a(t;x,D_x)e(t,s;x,D_x)=0,\qquad\text{on } (s,T],\\
&e(s,s,;x,D_x)=\Ident.
\end{cases}
\end{align}
As a first idea to find the fundamental solution, consider on $\mathcal{S}(\R^d)$ the operator 
\begin{align}
e^{-\int_s^t a(\tau)\id \tau}(x,D_x)u(x)=\int_{\R^d} e^{ix\cdot \xi} e^{-\int_s^t a(\tau;x,\xi)\id \tau}\wh{u}(\xi) \text{\dj}\xi.
\end{align}
Clearly, this operator is not the fundamental solution of \eqref{eqPDE}. However we now use Feynman's time slicing approximation \cite{Feynman-48}: define for $k\in\N$ a partition
\begin{align}\label{eqPartition}
\pi_{s,t}=\{t_0=s,t_1,t_2,\ldots,t_{k+1}=t\}
\end{align}
where $0\leq t_0\leq t_1\leq \ldots\leq t_{k+1}\leq T$. Denote by $|\pi_{s,t}|=\max_{1\leq j\leq k+1}|t_j-t_{j-1}|$ the mesh of $\pi_{s,t}$. Consider now $e^{-\int_{t_j}^{t_{j+1}}a(\tau)\id\tau}(x,D_x)$ as the fundamental solution to operators $\partial_t+a(t_{j+1};x,D_x)$ with ``frozen'' time-dependence. We want to show that
\begin{align}\label{eqIntroduction1}
e^{-\int_{t_0}^{t_1}a(\tau)\id \tau}(x,D_x)\circ \ldots \circ e^{-\int_{t_k}^{t_{k+1}}a(\tau)\id \tau}(x,D_x) 
\end{align}
converges for $|\pi_{s,t}|\rightarrow 0$ to the fundamental solution $e(t,s;x,D_x)$ of \eqref{eqPDE}. Using oscillatory integrals  \eqref{eqIntroduction1} can be written as
\begin{align}\label{eqIntroduction2}
\Os\int_{\R^{2(k+1)d}}e^{\sum_{j=0}^k i(x^{j+1}-x^{j})\cdot \xi^{j+1}-\int_{t_j}^{t_{j+1}}a(\tau;x^j,\xi^{j+1})\id \tau}
u(x^{k+1})\text{\dj} x^{k+1}\text{\dj} \xi^{k+1}\cdots \text{\dj} x^1\text{\dj} \xi^1.
\end{align}
For $|\pi_{s,t}|\rightarrow 0$, the number of integrals in \eqref{eqIntroduction2} will to go to infinity. 
With a version of Kumano-go's theory of multiple symbols \cite{Kumanogo-75} it is possible to estimate multiple compositions of pseudo-differential operators such as \eqref{eqIntroduction2}. This theory is extended in Section \ref{secMultipleSymbols} to negative definite symbols.



In order to see how our symbol classes differ from the standard H{\"o}rmander symbol classes and their adaption to basic weight functions, let us give some details concerning negative definite functions, cf. \cite{Jacob-01:2}.

 The following notation will be used throughout: for $\alpha\in\N_0^d=\N^d\cup \{0\}$ the derivative $\partial_x^{\alpha}u$ is defined as $\frac{\partial^{|\alpha|}}{\partial x_1^{\alpha_1}\cdots \partial x_d^{\alpha^d}}u$. For two quantities $X$ and $Y$ we use $X\lv Y$ or $Y\gv X$ to denote the statement $X\leq CY$ or $X\geq CY$. We use subscripts to emphasize the dependence on parameters, i.e. $X\lv_k Y$ is synonymous with $X\leq C_k Y$ for some constant $C_k>0$ that depends on the parameter $k$. Furthermore, we use the Japanese bracket convention $\jb{\xi}:=\big(1+\psi(\xi)\big)^{\frac{1}{2}}$. The corresponding operator will similarly be written as $\jb{\mathrm{D}}$. Also, define for $g=0,1,2$, the cut-off function $\rho_g:\N_0\rightarrow \N_0,\:\rho_g(k)=\min(k,g)$. 

A function $\psi:\R^d\rightarrow \C$ is a continuous negative definite function if it has the L\'evy-Khintchine representation
\begin{align}\label{eqLevyKhintchine}
\psi(\xi)=c+il\cdot \xi+\frac{1}{2}\xi\cdot Q\xi+\int_{\R^d\backslash\{0\}} \left(1-e^{i\xi\cdot y}+\frac{i\xi\cdot y}{1+|y|^2}\right)\nu(\id y),
\end{align} 
where $c\geq 0$, $l\in\R^d$, $Q\in\R^{d\times d}$ is a positive semi-definite matrix, and $\nu$ is a L\'evy measure, i.e.
\begin{align}
\int_{\R^d\backslash \{0\}} \min(|y|^2,1)\nu(\id y)<\infty.
\end{align}
In particular this implies that $\psi$ is in general not smooth and it is not possible to define a principal symbol. Also, no homogeneous expansion formulae exist. Every continuous negative definite function $\psi$ is the characteristic exponent of a L\'evy process $(Y_t)_{t\geq 0}$:
\begin{align}
\expectation[e^{i\xi\cdot Y_t}]=e^{-t\psi(\xi)}.
\end{align}
The pseudo-differential operator $-\psi(D)u=-\fourier^{-1}[\psi\wh{u}]$ corresponds on $C_0^{\infty}(\R^d)$ with the infinitesimal generator of the L\'evy process $(Y_t)_{t\geq 0}$. The associated operator semigroup $(T_t)_{t\geq 0}$ has the representation $T_tu(x)=\fourier^{-1}[e^{-t\psi}\wh{u}]$. 
Furthermore, pseudo-differential operators with negative definite symbols satisfy the positive maximum principle, cf. \cite{Courrege-67}. This motivates the use of continuous negative definite functions in the construction of more general time- and space-inhomogeneous Markov processes.
\begin{definition}\label{defSymbolClass}
Let $\psi:\R^d\rightarrow \R$ be a continuous negative definite function satisfying for all multi-indices $\alpha\in \N_0^d$,
\begin{align}
|\partial_{\xi}^{\alpha}\jb{\xi}^2|\lv_{\alpha} \jb{\xi}^{2-\rho_2(|\alpha|)}.
\end{align}
For $m\in \R$ we call a $C^{\infty}$-function $a:\R^{2d}\rightarrow \C$ a symbol of class $S^{m,\psi}_{\rho_g}(\R^{2d})$ , $g\in \{0,1,2\}$, if for all $\alpha,\beta\in\N_0^d$,
\begin{align}\label{eqIntroduction3}
|\partial_{\xi}^{\alpha}D_x^{\beta}a(x,\xi)|\lv_{\alpha,\beta} \jb{\xi}^{m-\rho_g(|\alpha|)}
\end{align}
where $x,\xi\in\R^d$. 
\end{definition} 
Note that the use of the cut-off function $\rho_g$ on the right-hand side of \eqref{eqIntroduction3} means that
the decay of the derivatives of the symbol $a$ only improves upto derivatives of order $2$: for a symbol $a\in S^{m,\psi}_{\rho_2}(\R^{2d})$ and $\alpha,\alpha',\alpha''\in \N_0^d$, $|\alpha|=1$, $|\alpha'|=2$, $|\alpha''|>2$ it follows that $\partial_{\xi}^{\alpha}a \in S^{m-1,\psi}_{\rho_1}(\R^{2d})$, $\partial_{\xi}^{\alpha'}a\in S^{m-2,\psi}_{\rho_0}(\R^{2d})$ and $\partial_{\xi}^{\alpha''}a \in S^{m-2,\psi}_{\rho_0}(\R^{2d})$.  This is why the symbolic calculus based on basic weight functions \cite{Kumanogo-63} cannot be applied.

The construction of a fundamental solution of \eqref{eqPDE} requires the following conditions on the symbol $a(t;x,\xi)$.
\begin{assumptions}\label{mainAssumptions}
For $T>0$, let $a:[0,T]\times \R^{2d}\rightarrow \C$ satisfy the following conditions:
\begin{itemize}
\item[(A1)] For $m\leq 2$,
\begin{align}
 a\in C_b\big([0,T]; S^{m,\psi}_{\rho_2}(\R^{2d})\big).
\end{align}
\item[(A2)] There exists $0\leq m'\leq 2$ such that on $[0,T]\times \R^{2d}$
\begin{equation}
\Re a(t;x,\xi)\gv  \jb{\xi}^{m'}
\end{equation}
uniformly in $t\in [0,T]$.
\item[(A3)] For any multi-indices $\alpha,\beta\in \N_0^d$, we have on $[0,T]\in \R^{2d}$,
\begin{equation}\label{A3}
\left|\frac{\partial_{\xi}^{\alpha}D_x^{\beta}a(t;x,\xi)}
{\Re a(t;x,\xi)}\right|\lv_{\alpha,\beta} \jb{\xi}^{-\rho_2(|\alpha|)}
\end{equation}
uniformly in $t\in [0,T]$.
\end{itemize}
\end{assumptions}
If $m'=m$, then (A3) follows from (A2). Note that (A2) implies that parabolic operators of degenerate type satisfy these assumptions. The restriction to $m\leq 2$ in (A1) follows from the fact that every continuous negative 
definite function satisfies the estimate $|\psi(\xi)|\lv_{\psi}\jb[|\cdot|^2]{\xi}^2$, cf. \cite{Hoh-98}. Continuous negative definite functions are needed for the construction of Markov processes, cf. Theorem \ref{theoremTransitionFunction}. This is the motivation for using the symbol class $S^{m,\psi}_{\rho_2}(\R^{2d})$ in (A1). Let us state the main results of this paper. The first one concerns the existence and representation of the fundamental solution of \eqref{eqPDE}.
\begin{theorem}\label{theoremFundamentalSolution}
For $T>0$, let $a:[0,T]\times \R^{2d}\rightarrow \C$ be a function satisfying Assumptions \ref{mainAssumptions}.
Then there exists a symbol $p^{\lim}(s,t)\in S^{0,\psi}_{\rho_2}(\R^{2d})$, given by
\begin{align}\label{eqPLim}
p^{\lim}(s,t;x,\xi)=\lim_{|\pi_{s,t}|\rightarrow 0} \Os\int_{\R^{2kd}} e^{-i\sum_{j=1}^k\big(y^j\cdot \eta^j-\int_{t_j}^{t_{j+1}}a(\tau;x+\sum_{l=1}^j y^l,\xi+\eta^{j+1})\id \tau\big)}\text{\dj} y^k\text{\dj} \eta^k\ldots \text{\dj} y^1\text{\dj} \eta^1.
\end{align}
The corresponding pseudo-differential operator $p^{\lim}(s,t;x,D_x)$ is the fundamental solution for \eqref{eqPDE} and can be written for $u\in \mathcal{S}(\R^d)$ as
\begin{align}\label{eqPLimOperator}
\begin{split}
&p^{\lim}(s,t;x,D_x)u(x)\\
=\:&\lim_{|\pi_{s,t}|\rightarrow 0} e^{-\int_s^{t_1}a(\tau)\id \tau}(x,D_x)e^{-\int_{t_1}^{t_2}a(\tau)\id \tau}(x,D_x)\circ \ldots\circ e^{-\int_{t_k}^t a(\tau)\id \tau}(x,D_x)u(x).
\end{split}
\end{align} 
\end{theorem}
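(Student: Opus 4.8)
The plan is to pass to the limit in Feynman's time-sliced compositions \eqref{eqIntroduction1}: realise each finite composition as a single pseudo-differential operator with an explicit oscillatory-integral symbol, prove for these symbols estimates uniform in the partition $\pi_{s,t}$, extract the limiting symbol $p^{\lim}(s,t)$, and verify that the associated operator is a fundamental solution of \eqref{eqPDE}. The assumption that carries the argument is the parabolicity (A2): it gives $\big|e^{-\int_{t_j}^{t_{j+1}}a(\tau;x,\xi)\id\tau}\big|\le 1$, and, together with (A3), exactly the exponential damping needed to treat a symbol of order $m\le 2$ as if it were of order $0$.

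First I would set $b_j(x,\xi):=e^{-\int_{t_j}^{t_{j+1}}a(\tau;x,\xi)\id\tau}$ and observe that $b_j\in S^{0,\psi}_{\rho_2}(\R^{2d})$ with seminorms bounded uniformly in $j$: the derivative $\partial_\xi^\alpha D_x^\beta b_j$ equals $b_j$ times a polynomial in the quantities $\int_{t_j}^{t_{j+1}}\partial_\xi^{\alpha'}D_x^{\beta'}a(\tau;\cdot)\id\tau$, each $\lv|t_{j+1}-t_j|\,\Re a(\tau;\cdot)\,\jb{\xi}^{-\rho_2(|\alpha'|)}$ by (A3), and every power of $\Re a$ so produced is absorbed by the factor $|b_j|=e^{-\int_{t_j}^{t_{j+1}}\Re a(\tau;\cdot)\id\tau}$. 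Then I would invoke the multiple symbol calculus of Section \ref{secMultipleSymbols} to identify $b_0(x,D_x)\circ\cdots\circ b_k(x,D_x)$ with the pseudo-differential operator whose symbol is the pre-limit oscillatory integral in \eqref{eqPLim}, and --- this is where the extension of Kumano-go's theory is essential --- to estimate the $S^{0,\psi}_{\rho_2}$-seminorms of that composite symbol uniformly in the number $k$ of factors and in the partition. Here the point is that the leading term of the composite symbol is $\prod_j b_j=e^{-\int_s^t a(\tau;\cdot)\id\tau}$, which stays in a fixed bounded subset of $S^{0,\psi}_{\rho_2}(\R^{2d})$ because $\frac{1}{t-s}\int_s^t a(\tau;\cdot)\id\tau$ again satisfies (A1)--(A3), while the commutator corrections are handled by the multiple symbol estimates, each differentiation being balanced against the exponential damping from (A2)--(A3), so that the bounds telescope instead of picking up a constant per slice.

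Next I would prove convergence as $|\pi_{s,t}|\to0$. Since products of symbols correspond to iterated compositions, refining $\pi_{s,t}$ by one node inside a subinterval $[t_j,t_{j+1}]$ changes the operator by exactly the composition error between $b_j(x,D_x)$ and a two-factor product, and every term of that error carries at least one $\xi$- and one $x$-derivative of an exponential, hence a factor $|t_{j+1}-t_j|^2$; summing over subintervals, the total change is $\lv|\pi_{s,t}|$ in a slightly weaker symbol norm. Hence $\{p_{\pi_{s,t}}(s,t)\}$ is Cauchy in such a norm, and combined with the uniform bound above this produces a limit $p^{\lim}(s,t)\in S^{0,\psi}_{\rho_2}(\R^{2d})$; the $L^2(\R^d)$- and $\mathcal{S}(\R^d)$-continuity of pseudo-differential operators with symbols in this class then gives \eqref{eqPLimOperator}. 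Splitting a partition of $[s,t]$ at an interior node $r$ and passing to the limit yields the evolution identity $p^{\lim}(s,t;x,D_x)=p^{\lim}(s,r;x,D_x)\circ p^{\lim}(r,t;x,D_x)$ for $s\le r\le t$.

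Finally I would verify the defining relations of a fundamental solution. The initial condition $p^{\lim}(s,s;x,D_x)=\Ident$ is immediate, since for $t=s$ the composition in \eqref{eqPLimOperator} is the empty product. For the evolution equation I would combine the evolution identity with the short-time asymptotics $p^{\lim}(t,t+h;x,D_x)=e^{-\int_t^{t+h}a(\tau)\id\tau}(x,D_x)+o(h)$ --- a consequence of the convergence estimate, since the single-slice approximation already has the right order --- and the elementary fact that $h^{-1}\big(e^{-\int_t^{t+h}a(\tau;\cdot)\id\tau}-1\big)\to-a(t;\cdot)$ in $S^{m,\psi}_{\rho_2}(\R^{2d})$, using the continuity in $t$ from (A1); differentiating the evolution identity in the relevant time variable then produces the evolution equation for $p^{\lim}$, first as a one-sided derivative and then, by continuity of $t\mapsto a(t;x,D_x)$, as a genuine derivative on $\mathcal{S}(\R^d)$, which identifies $p^{\lim}(s,t;x,D_x)$ as the fundamental solution of \eqref{eqPDE}. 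The step I expect to be the main obstacle is the uniform-in-$\pi_{s,t}$ symbol estimate above: because $\rho_g$ caps the derivative gain at order $2$, the standard asymptotic bookkeeping for long compositions of pseudo-differential operators no longer closes, and one must run the extended Kumano-go estimates of Section \ref{secMultipleSymbols} carefully, extracting the exponential damping from (A2)--(A3) at every differentiation so that the constants telescope into $e^{CT}$ rather than growing with the number $k$ of slices.
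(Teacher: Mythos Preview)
Your proposal is correct and follows the paper's architecture closely: uniform $S^{0,\psi}_{\rho_2}$-bounds on the individual factors $b_j=e^{-\int_{t_j}^{t_{j+1}}a(\tau)\id\tau}$ via (A2)--(A3), identification of the composition with the oscillatory-integral symbol via the multiple-symbol calculus of Section~\ref{secMultipleSymbols}, a Cauchy estimate under refinement with gain $(t_{j+1}-t_j)^2$ per subinterval, extraction of $p^{\lim}(s,t)$, and verification of the fundamental-solution properties.

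The one place where your sketch remains genuinely vague is exactly the point you flag as the main obstacle: the uniform-in-$k$ bound on the composite symbol. Theorem~\ref{multiProduct} alone gives a constant $C_l^{\,k}$, which blows up as $|\pi_{s,t}|\to0$; your phrase ``the bounds telescope instead of picking up a constant per slice'' is the right conclusion but not yet a mechanism. The paper makes this precise by Fujiwara's device (Appendix~\ref{secFujiwara}, Appendix~\ref{secSkip}): one does not estimate $b_0\comp\cdots\comp b_k$ directly but writes it as $(q_0+q_1)(\pi_{s,t})+R(\pi_{s,t})$, where $q_0$ is the product $\prod_j b_j=e^{-\int_s^t a(\tau)\id\tau}$, $q_1$ is the explicit first-order correction \eqref{eqDefinitionq1}, and $R$ is represented as a \emph{skipping} sum $\sum'$ of products of local remainders $r(\pi_{t_{j_{i-1}},t_{j_i}})$. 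The terms $q_0$ and $q_1$ are estimated by hand (this is where the exponential damping $s^je^{-s}\le A_j$ you mention is used), while each $r$-factor carries an explicit time weight $(t_{j_i}-t_{j_i-1})$ (Lemma~\ref{lemmaKey}), so that after applying Theorem~\ref{multiProduct} to the $\sum'$-terms the combinatorics collapse to $\prod_n\big(1+c(t-s)(t_{n+1}-t_n)\big)-1\lv(t-s)^2$. That is the actual ``telescoping''. Your verification of the fundamental-solution property via the evolution identity and short-time asymptotics is more detailed than the paper's, which simply differentiates $p^{\lim}(s,t)$ in $t$ directly, but both arguments are equivalent.
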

Using a standard argument, it is possible to show that Theorem \ref{theoremFundamentalSolution} holds even if the symbol $a$ is non-smooth.
\begin{corollary}\label{corollaryRoughSymbols}
Let $a:[0,T]\times \R^{2d}\rightarrow \C$ be the symbol from Theorem \ref{theoremFundamentalSolution}. Introduce the non-smooth symbol class $S^{m,\psi}_{\rho_2}(\lambda,\R^{2d})$, $\lambda \in\N_0\cup \{\infty\}$ consisting of all functions
$b:\R^{2d}\rightarrow \C$ such for all $|\alpha|,|\beta|\leq \lambda$,
\begin{align}
|\partial_{\xi}^{\alpha}D_x^{\beta} b(x,\xi)|\lv_{\alpha,\beta} \jb{\xi}^{m-\rho_2(|\alpha|)}.
\end{align}
Replace (A2) from Assumptions \ref{mainAssumptions} by
\begin{itemize}
\item[(A2')] For $m\leq 2$ we have
\begin{align}
a\in C_b\big([0,T];S^{m,\psi}_{\rho_2}(100d,\R^{2d})\big).
\end{align}
\end{itemize}
Then there exists a symbol $p^{\lim}(s,t)\in S^{0,\psi}_{\rho_2}(100d,\R^{2d})$ given by \eqref{eqPLim} such that the corresponding pseudo-differential operator \eqref{eqPLimOperator} is the fundamental solution of \eqref{eqPDE}.
\end{corollary}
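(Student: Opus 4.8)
The plan is to reduce the non-smooth case to the smooth case of Theorem \ref{theoremFundamentalSolution} by a mollification argument, combined with a careful bookkeeping of how many $x$- and $\xi$-derivatives are actually consumed in the proof of that theorem. First I would fix a standard Friedrichs mollifier $\varphi_\eps(x,\xi)=\eps^{-2d}\varphi(x/\eps,\xi/\eps)$ with $\varphi\in\mathcal{S}(\R^{2d})$, $\int\varphi=1$, and set $a_\eps(t;x,\xi):=(a(t;\cdot,\cdot)*\varphi_\eps)(x,\xi)$, the convolution being in the $(x,\xi)$-variables only, uniformly in $t$. Since $a\in C_b([0,T];S^{m,\psi}_{\rho_2}(100d,\R^{2d}))$, the mollified symbols $a_\eps$ are $C^\infty$ and satisfy, for every $\alpha,\beta$ with $|\alpha|,|\beta|\le 100d$, the estimate \eqref{eqIntroduction3} with constants uniform in $\eps$; for higher-order derivatives one picks up negative powers of $\eps$, but those orders will not enter the argument. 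One must also check that the lower bound (A2) and the quotient estimate (A3) survive mollification: (A2) follows because convolution with a probability density cannot decrease $\Re a$ below its infimum against $\jb{\xi}^{m'}$ up to a controlled error that is absorbed for small $\eps$, and (A3) is stable because both numerator and denominator behave well under the mollification and the bound is scale-invariant.

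Next I would verify that the entire construction leading to Theorem \ref{theoremFundamentalSolution} — the multiple-symbol estimates of Section \ref{secMultipleSymbols}, the convergence of the time-slicing products \eqref{eqIntroduction2}, and the a priori bounds yielding $p^{\lim}(s,t)\in S^{0,\psi}_{\rho_2}(\R^{2d})$ — only ever differentiates the symbol $a$ a bounded number of times, say at most $N_d$ times in $x$ and in $\xi$, where $N_d$ depends only on the dimension $d$ (the oscillatory-integral integration-by-parts arguments and the Calderón–Vaillancourt-type bounds for negative definite symbols each cost only finitely many, $d$-dependent, derivatives). The number $100d$ in (A2$'$) is chosen precisely so that $100d\ge N_d$, so that every estimate used in the proof of Theorem \ref{theoremFundamentalSolution} goes through verbatim for each $a_\eps$, with constants independent of $\eps$. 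Consequently, for each $\eps>0$ there is a symbol $p^{\lim}_\eps(s,t)\in S^{0,\psi}_{\rho_2}(\R^{2d})$ given by \eqref{eqPLim} with $a$ replaced by $a_\eps$, whose operator is the fundamental solution of \eqref{eqPDE} for the symbol $a_\eps$, and these satisfy the estimates defining $S^{0,\psi}_{\rho_2}(100d,\R^{2d})$ uniformly in $\eps$.

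Finally I would pass to the limit $\eps\to 0$. Since $a_\eps\to a$ uniformly on compact sets together with all derivatives up to order $100d$, and since the defining oscillatory integral \eqref{eqPLim} depends continuously (again by the uniform-in-$\eps$ oscillatory-integral estimates) on the symbol in the relevant finitely-many seminorms, the symbols $p^{\lim}_\eps(s,t)$ form a bounded sequence in $S^{0,\psi}_{\rho_2}(100d,\R^{2d})$ that converges, uniformly on compacta with all derivatives of order $\le 100d$, to the symbol $p^{\lim}(s,t)$ defined by \eqref{eqPLim} for $a$ itself; the limit therefore lies in $S^{0,\psi}_{\rho_2}(100d,\R^{2d})$. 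That the corresponding operator solves \eqref{eqPDE} with the rough symbol $a$ follows by passing to the limit in the operator identity $\partial_t p^{\lim}_\eps + a_\eps(t;x,D_x)p^{\lim}_\eps = 0$ tested against $u\in\mathcal{S}(\R^d)$, using that $a_\eps(t;x,D_x)u\to a(t;x,D_x)u$ in $\mathcal{S}(\R^d)$ and that $p^{\lim}_\eps(s,t;x,D_x)u\to p^{\lim}(s,t;x,D_x)u$, uniformly for $t$ in compact subintervals. The main obstacle is the bookkeeping in the second paragraph: one has to go back through the proof of Theorem \ref{theoremFundamentalSolution} and the multiple-symbol calculus of Section \ref{secMultipleSymbols} and extract an explicit, dimension-only bound on the number of derivatives used, checking that the constants in all the oscillatory-integral and composition estimates do not degenerate when only $100d$ derivatives are available; the mollification and limiting steps themselves are then routine.
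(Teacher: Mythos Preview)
Your proposal is correct, but it takes a somewhat more elaborate route than the paper. The paper does not mollify at all: its entire ``proof'' of Corollary \ref{corollaryRoughSymbols} is the observation (made in the paragraph following Corollary \ref{corollaryFeynmanExists}) that every step of the argument for Theorem \ref{theoremFundamentalSolution}---the oscillatory-integral regularizations in Section \ref{secMultipleSymbols}, the seminorm estimates in Lemmas \ref{lemmaKey} and \ref{lemmaFujiSkip}, and the convergence in Corollary \ref{corollaryMain}---only ever uses seminorms $|\cdot|_l^{(m)}$ with $l$ bounded by a fixed, dimension-dependent number, so that $100d$ derivatives are simply enough to run the proof verbatim for the rough symbol $a$ itself.

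Your approach wraps this same bookkeeping inside a mollification-and-limit argument: you regularize to $a_\eps$, invoke the smooth theorem, use the derivative count to get uniform-in-$\eps$ bounds, and then pass to the limit. This is a legitimate and standard strategy, and it has the conceptual advantage of citing Theorem \ref{theoremFundamentalSolution} as a black box. But notice that the step you yourself flag as the ``main obstacle''---tracking that at most $N_d\le 100d$ derivatives are consumed and that the constants depend only on those seminorms---is exactly the paper's whole argument; once you have that, the mollification and limit steps are superfluous, since the same estimates already hold for $a$ directly. So your route is sound but carries extra overhead (in particular the verification that (A2) and (A3) survive convolution in $(x,\xi)$, which is doable via Peetre's inequality for small $\eps$ but is not entirely trivial), whereas the paper's route is the bare bookkeeping and nothing else.
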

The factor $100d$ is an upper estimate for the differentiability of the symbol and emphasizes the fact that it depends on the dimension $d$ of the underlying space. With the techniques used in this paper it is
not possible to achieve a bound that is independent of $d$. 

The next theorem gives the representation of the transition function of a Markov process in terms of the symbol $p^{\lim}(s,t;x,\xi)$.
\begin{theorem}\label{theoremTransitionFunction}
For $T>0$, let $a:[0,T]\times \R^{2d}\rightarrow \C$ be a function satisfying:
\begin{itemize}
\item[(B1)] For $t\in [0,T]$ and $x\in\R^d$, $\xi\mapsto a(t;x,\xi)$ is a continuous negative definite function such that $a(t;x,0)=0$.
\item[(B2)] For $m\leq 2$,
\begin{align}
a\in C_b\big([0,T]; S^{m,\psi}_{\rho_2}(\R^{2d})\big).
\end{align}
\item[(B3)] There exists $0\leq m'\leq 2$ and $R>0$ such that for $x,\xi\in \R^d$, $|\xi|\geq R$,
\begin{equation}
\Re a(t;x,\xi)\gv  \jb{\xi}^{m'}
\end{equation}
uniformly in $t\in [0,T]$.
\item[(B4)] There exists $R>0$ such that for any multi-indices $\alpha,\beta\in \N_0^d$ and $x,\xi\in \R^d$, $|\xi|\geq R$,
\begin{equation}\label{B4}
\left|\frac{\partial_{\xi}^{\alpha}D_x^{\beta}a(t;x,\xi)}
{\Re a(t;x,\xi)}\right|\lv_{\alpha,\beta} \jb{\xi}^{-\rho_2(|\alpha|)}
\end{equation}
uniformly in $t\in [0,T]$.
\end{itemize}
Then there exists a Markov process with transition function
\begin{align}\label{eqTransitionFunction}
p_{s,t}(x,A)=\int_{\R^d}e^{ix\cdot \xi}p^{\lim}(s,t;x,\xi)\wh{\chi_A}(\xi)\text{\dj}\xi
\end{align}
where $\wh{\chi_A}$ has to be understood as an approximation and $p^{\lim}$ is given by \eqref{eqPLim}.
\end{theorem}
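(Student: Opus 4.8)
The plan is to realise the operators $T_{s,t}:=p^{\lim}(s,t;x,D_x)$ as a Markov evolution family in the sense of Definition \ref{definitionMarkovEvolutionFamily}, to upgrade them by the Riesz representation theorem to a transition function $p_{s,t}(x,\cdot)$, and then to apply Kolmogorov's extension theorem to produce $(X_t)_{t\ge0}$; formula \eqref{eqTransitionFunction} is then nothing but the definition of $T_{s,t}\chi_A$. First I would reduce (B1)--(B4) to Theorem \ref{theoremFundamentalSolution}. The lower bound (B3) and the quotient estimate (B4) are imposed only for $|\xi|\ge R$ — unavoidably, since (B1) forces $a(t;x,0)=0$, so $\Re a$ cannot be bounded below near $\xi=0$ — so I would fix a cut-off $\chi\in C_b^\infty(\R^d)$ with $\chi\equiv1$ on $\{|\xi|\le R\}$ and $\chi\equiv0$ on $\{|\xi|\ge 2R\}$ and pass to $\wt a(t;x,\xi):=a(t;x,\xi)+c\,\chi(\xi)$ with $c>0$ large. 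Using $\Re a\ge0$ together with (B2)--(B4) one checks that $\wt a$ satisfies Assumptions \ref{mainAssumptions}, so Theorem \ref{theoremFundamentalSolution} (or Corollary \ref{corollaryRoughSymbols}) applies to $\wt a$; since $a-\wt a=-c\chi$ is a smooth symbol of order zero compactly supported in $\xi$, a Duhamel-type perturbation of the multiple-symbol estimates of Section \ref{secMultipleSymbols} shows that the time-slicing products \eqref{eqPLimOperator} built from $a$ itself also converge, to a symbol $p^{\lim}(s,t)\in S^{0,\psi}_{\rho_2}(\R^{2d})$ given by \eqref{eqPLim} whose operator is the fundamental solution of \eqref{eqPDE}. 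In particular $T_{s,s}=\Ident$, and $T_{r,s}\circ T_{s,t}=T_{r,t}$ for $0\le r\le s\le t\le T$, because time-slicing products for a partition of $[r,t]$ refining through $s$ factor through those of $[r,s]$ and $[s,t]$ and the limit does not depend on the chosen partitions.

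Next I would establish properties d)--f) of Definition \ref{definitionMarkovEvolutionFamily} together with a Feller property, using (B1) on the original symbol. For fixed $x$ and a subinterval $[t_j,t_{j+1}]$ of the partition, $\xi\mapsto\int_{t_j}^{t_{j+1}}a(\tau;x,\xi)\id\tau$ is again continuous negative definite and vanishes at $\xi=0$ — the cone of continuous negative definite functions is closed under nonnegative combinations and pointwise limits, hence under integration in $\tau$ — so $\xi\mapsto e^{-\int_{t_j}^{t_{j+1}}a(\tau;x,\xi)\id\tau}$ is a continuous positive definite function equal to $1$ at the origin, i.e. by Bochner's theorem the characteristic function of a probability measure $\mu^{j}_x$ on $\R^d$; and for $u\in\mathcal{S}(\R^d)$, $e^{-\int_{t_j}^{t_{j+1}}a(\tau)\id\tau}(x,D_x)u(x)=\int_{\R^d}u(x+y)\,\mu^{j}_x(\id y)$. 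Each time-slicing factor is thus an averaging operator given by a Markov kernel, and — using continuity of $a$ in $x$ and the bounds (B2)--(B4) for weak continuity of $x\mapsto\mu^{j}_x$ — a Feller kernel; positivity preservation, contractivity, fixing the constant $1$, and the Feller property are all stable under composition. It then remains to check that these survive the passage to the limit \eqref{eqPLim} and \eqref{eqPLimOperator}: since that convergence takes place in $S^{0,\psi}_{\rho_2}$, hence locally uniformly in $\xi$, the operator $T_{s,t}$ inherits the positive maximum principle from the approximants, $T_{s,t}1=1$ because the symbol of each finite product \eqref{eqIntroduction1} equals $1$ at $\xi=0$, $\|T_{s,t}\|\le1$, and $T_{s,t}$ maps $C_\infty(\R^d)$ into itself.

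For $0\le s\le t$ and $x\in\R^d$ the functional $f\mapsto T_{s,t}f(x)$ on $C_\infty(\R^d)$ is then positive, linear, of norm $\le1$, with $T_{s,t}1=1$, so the Riesz representation theorem gives $T_{s,t}f(x)=\int_{\R^d}f(y)\,p_{s,t}(x,\id y)$ for a unique probability measure $p_{s,t}(x,\cdot)$; the Feller property renders $x\mapsto p_{s,t}(x,A)$ Borel measurable for $A\in\mathcal{B}(\R^d)$, $T_{s,s}=\Ident$ gives $p_{s,s}(x,\cdot)=\delta_x$, and $T_{r,s}\circ T_{s,t}=T_{r,t}$ becomes the Chapman--Kolmogorov identity $\int_{\R^d}p_{r,s}(x,\id y)p_{s,t}(y,A)=p_{r,t}(x,A)$. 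Kolmogorov's extension theorem then yields a time-inhomogeneous Markov process $(X_t)_{t\ge0}$ on $(\R^d)^{[0,\infty)}$ with transition function $p_{s,t}$, and \eqref{eqTransitionFunction} follows by writing $p_{s,t}(x,A)=T_{s,t}\chi_A(x)=p^{\lim}(s,t;x,D_x)\chi_A(x)$ and reading the right-hand side, exactly as in \eqref{eqPSDO}, through an approximation $\chi_A=\lim_n u_n$ in $C_\infty(\R^d)$, the limit being independent of the sequence by continuity of $T_{s,t}$.

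The step I expect to be the main obstacle is the second one: guaranteeing that the convergence furnished by Theorem \ref{theoremFundamentalSolution} is in a topology strong enough that positivity, contractivity and the normalisation $T_{s,t}1=1$ pass to the limit \emph{simultaneously}, and that the limit operator is genuinely Feller, so that one obtains an honest transition \emph{kernel} with measurable $x$-dependence rather than merely a pointwise family of functionals. The bounded-perturbation reduction from the local hypotheses (B3)--(B4) to the global Assumptions \ref{mainAssumptions} also needs some care, but should be routine given the multiple-symbol machinery of Section \ref{secMultipleSymbols}.
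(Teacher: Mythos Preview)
Your overall strategy---verify the Markov evolution family properties on each time-slicing factor via negative definiteness, pass to the limit, invoke Riesz representation and then Kolmogorov/Dynkin---coincides with the paper's. The chief divergence is in the reduction from (B1)--(B4) to Assumptions~\ref{mainAssumptions}. You propose to add a compactly supported bump $c\chi(\xi)$ and then run a Duhamel-type comparison of the multiple-symbol products; the paper instead adds a \emph{constant} $R>0$. Because $R$ is independent of both $x$ and $\xi$, each factor splits as $e^{-\int_{t_j}^{t_{j+1}}(a(\tau)+R)\id\tau}=e^{-R(t_{j+1}-t_j)}\,e^{-\int_{t_j}^{t_{j+1}}a(\tau)\id\tau}$ and the scalar pieces commute through every $\#_{\textsc{kn}}$-composition, giving $p^{\lim}_{a+R}(s,t;x,D_x)=e^{-R(t-s)}\,p^{\lim}_a(s,t;x,D_x)$ in one line. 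This sidesteps the Duhamel perturbation entirely, and the ``needs some care'' you flag disappears. It is worth checking that $a+R$ really does satisfy (A2)--(A3) globally---this uses $\Re a\ge 0$ from (B1) near $\xi=0$ and (B3)--(B4) away from it---but that is elementary.

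Conversely, your treatment of positivity and contractivity of the individual factors is in fact more direct than the paper's. You observe that $\xi\mapsto\int_{t_j}^{t_{j+1}}a(\tau;x,\xi)\id\tau$ is itself continuous negative definite with value $0$ at $\xi=0$, so Bochner gives a probability measure $\mu^j_x$ immediately and $e^{-\int_{t_j}^{t_{j+1}}a(\tau)\id\tau}(x,D_x)u(x)=\int u(x+y)\,\mu^j_x(\id y)$. The paper instead introduces a further sub-partition of $[t_{j-1},t_j]$ and freezes the time variable on each sub-subinterval in order to reduce to convolution semigroups associated with a single continuous negative definite function; this extra step is not needed once one notes, as you do, that the cone of continuous negative definite functions is closed under integration in $\tau$. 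Your concern about whether positivity, contractivity and $T_{s,t}1=1$ survive the limit is handled in the paper exactly as you sketch: these properties are stable under composition and under the operator convergence furnished by Corollary~\ref{corollaryFeynmanExists}, so it suffices to verify them on each factor.
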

Note that the transition function is a Feynman path integral as  $p^{\lim}(s,t;x,\xi)$ is an infinite-dimensional integral. Furthermore, it follows from \eqref{eqPLimOperator} that
\begin{align}
p^{\lim}(s,t;x,\xi)= e^{-\int_s^t a(\tau;x,\xi)\id \tau}+\lim_{|\pi_{s,t}|\rightarrow 0}r_1(\pi_{s,t};x,\xi).
\end{align}
where $r_1(s,t)\in S^{-1,\psi}_{\rho_1}$. If $a$ is a time-independent symbol and an equidistant partition $\pi_{s,t}$ is chosen then 
\begin{align}
p^{\lim}(s,t;x,D_x)=\lim_{n\rightarrow \infty} \big(e^{-\frac{a}{n}}(x,D_x)\big)^n
\end{align}
which can be compared to the Trotter-Kato-Chernoff product formula for strongly continuous semigroups, cf. \cite{Chernoff-74,Kato-66}. This formula has been used in \cite{Butko-10} for an approximation result for strongly continuous contraction semigroups that are positivity preserving on $C_{\infty}(\R^d)$ and that have as pregenerators pseudo-differential operators with negative definite symbols. 

Our approach is mainly based on the papers by N Kumano-go and Fujiwara \cite{KumanogoNaoto-96,Fujiwara-79,Fujiwara-80,Fujiwara-91,Fujiwara-93}. In \cite{KumanogoNaoto-96} a similar result to that of Theorem \ref{theoremFundamentalSolution} is given for symbols of the class $S^m_{\lambda,\rho,\delta}(\R^{2d})$. This class is an extension of the H{\"o}rmander symbol class to basic weight functions as introduced in \cite{Kumanogo-63}. 
As will be seen later, a continuous negative definite function $\psi$ is not a basic weight function. Therefore $S^{m,\psi}_{\rho_g}(\R^{2d})$ is not a subset of $S^{m}_{\lambda,\rho,\delta}(\R^{2d})$ and techniques need to be suitably modified. Let us emphasize that the reason for using negative definite functions is the connection to the theory of Markov processes. The symbol class $S^{m,\psi}_{\rho_g}(\R^{2d})$ was introduced in \cite{Hoh-98}, cf. also \cite{Hoh-00},\cite{Jacob-02}, to construct Feller semigroups and Feller processes. In \cite{Potrykus-09},\cite{Potrykus-10:1}, it was adapted to take into account rough, non-smooth, symbols similar to the ones found in Corollary \ref{corollaryRoughSymbols}. The important ideas used in  \cite{KumanogoNaoto-96} which have to be modified in order to work with the symbol class $S^{m,\psi}_{\rho_g}(\R^{2d})$ are:
\begin{itemize}
\item The theory of multiple symbols as developed in \cite{Kumanogo-75} gives estimates for $k$-fold compositions of pseudo-differential operators in terms of their symbols. In particular, the dependence of any constants on the variable $k$ is stated explicitly. It is this theory that is extended in Section \ref{secMultipleSymbols} to the symbol classes $S^{m,\psi}_{\rho_g}(\R^{2d})$. Note that as a by-product of this theory a precise control on the maximum differentiability of the involved symbols is possible. More details are given in Section \ref{secMultipleSymbols}.
\item The calculation of the remainder term of the symbol of a $k$-fold composition of pseudo-differential operators as introduced in \cite{Fujiwara-91} by Fujiwara is crucial for our argumentation. This leads to sequences that skip every other index and related estimates. Details are provided in Appendix \ref{secFujiwara} and Appendix \ref{secSkip}.  
\end{itemize}
Many results concerning the construction of Markov processes using continuous negative definite functions exist, c.f. \cite{Jacob-01:2,Jacob-02,Jacob-05:1}. In \cite{Maslov-73} it is shown that for certain negative definite symbols $a(x,\xi)$ with associated Feller semigroup $(T_t)_{t\geq 0}$, the operators $T_t$ are on $C_0^{\infty}(\R^d)$ pseudo-differential operators with symbol $p(t;x,\xi)$ satisfying for $t\rightarrow 0$ the asymptotic relation
$$
p(t;x,\xi)=e^{-ta(x,\xi)}+o(1)
$$
uniformly for $x\in K$, $K\subset \R^d$ compact, and $\xi\in\R^d$.  In \cite{Kolokoltsov-00}, Kolokoltsov constructed Markov processes by using symbols $a(x,\xi)$ given by
\begin{align*}
a(x,\xi)=i\big(b(x),\xi\big)+\int_0^{\infty}\int_{S^{d-1}} \left(e^{-iy\cdot \xi}
-1+\frac{iy\cdot \xi}{1+|y|^2}\right)\frac{\mathrm{d}|y|}{|y|^{1+\alpha}}\wt{\mu}(x,\id \eta)
\end{align*}
where $\alpha\in (0,2)$, $y=|y|\eta$ and $\wt{\mu}(x,\id \eta)$ is a kernel on $\R^d\times \mathcal{B}(S^{d-1})$ that satisfies some additional assumptions. In \cite{Jacob-07} an approximation for a Feller semigroup $(T_t)_{t\geq 0}$ based on the Yosida approximation is given:  let $a(x,D)$ be a pseudo-differential operator with continuous negative definite symbol $a(x,\xi)$ satisfying the usual assumptions. Define the Yosida approximation of the symbol as
$
a^{\nu}(x,\xi)=(\nu a(x,\xi)(\nu+a(x,\xi))^{-1}.
$
This symbol is uniformly bounded in $(x,\xi)\in \R^{2d}$, and hence the associated semigroups $(T_t^{\nu})_{t\geq 0}$ exist. If $-a(x,D)$ is the pre-generator of $(T_t)_{t\geq 0}$, then $T_tu=\lim_{\nu\rightarrow \infty}T_t^{\nu}u$ strongly for $t>0$.  In \cite{Kochubei-88} a Markov process is constructed using the fundamental solution for \eqref{eqPDE}. Here, the symbol $a(t;x,\xi)$ has the representation
\begin{align}
a(t;x,\xi)=\sum_{j=0}^m a_j(t;x,\xi)
\end{align}
where each $a_j$ has certain homogeneity as well as other properties
that guarantee that the theory of hypersingular integral operators is applicable. Note that these conditions
imply that the involved symbols are continuous negative definite functions.
In \cite{Tsutsumi-74} Tsutsumi used the Levi-Mizohata method to find the fundamental solution for \eqref{eqPDE} for \psd operators with H{\"o}rmander symbols $S^{m}_{\rho,\delta}(\R^{2d})$ for $m\geq 0$ and $0\leq \delta\leq \rho\leq 1$. In short this method can be described as follows: start by setting
\begin{align}
e_0(t,s;x,\xi):=e^{-\int_s^t a(\tau;x,\xi)\id \tau}
\end{align}
and define for $j=1,2,\ldots$ the symbols $e_j(t;s,x,\xi)$ as solutions to the ordinary differential equation
\begin{align}
\begin{cases}
&\partial_t e_j(t,s;x,\xi)+a(t;x,\xi)e_j(t,s;x,\xi)=-q_j(t,s;x,\xi)\\
&e_j(t,s;x,\xi)|_{t=s}=0,
\end{cases}
\end{align}
where
\begin{align}
q_j(t,s;x,\xi):=\sum_{k=0}^{j-1} \sum_{|\alpha|+k=j} \frac{1}{\alpha!}
\partial_{\xi}^{\alpha} a(t;x,\xi) D_x^{\alpha}e_k(t,s;x,\xi).
\end{align}
Using an iterative procedure, the symbol of the fundamental solution $e(t,s;x,D_x)$ for \eqref{eqPDE} can then be written as
\begin{align}\label{eqIntroduction4}
e(t,s;x,\xi)=e^{-\int_s^t a(\tau;x,\xi)\id \tau}+r(t,s;x,\xi)
\end{align}
where $r(t,s;x,\xi)$ is of lower order. This result was extended in \cite{Bottcher-08} to the symbol classes $S^{m,\psi}_{\rho_g}(\R^{2d})$, $0\leq m\leq 2$, using slightly stronger conditions on the symbols than we use in Assumptions \ref{mainAssumptions}. It is shown that the fundamental solution forms a Markov evolution system and therefore gives rise to a time- and space-inhomogeneous Markov process. In this case, the corresponding Markov transition function is a pseudo-differential operator with symbol of class $S^{0,\psi}_{\rho_g}(\R^{2d})$ and representation \eqref{eqIntroduction4}. However, the exact form of the remainder term $r(t,s;x,\xi)$ is not known. In this paper we weaken the conditions from \cite{Bottcher-08} to allow non-degenerate, non-smooth, symbols, and show that the fundamental solution can be written as a Feynman path integral.

This paper is organized as follows: Section \ref{secMultipleSymbols} develops the theory of multiple symbols from \cite{Kumanogo-75} for negative definite symbols $S^{m,\psi}_{\rho_g}(\R^{2d})$. The construction of the fundamental solution of \eqref{eqPDE} is given in Section \ref{secFundamentalSolution} and follows \cite{KumanogoNaoto-96}. In Section \ref{secMarkovProcesses}, the fundamental solution constructed in Section \ref{secFundamentalSolution} is used to construct Markov processes. Finally, the appendix contains results that do not naturally fit into any of the other proofs but hopefully provide additional insights.

\section{Pseudo-differential operators with multiple negative definite symbols}\label{secMultipleSymbols}
In \cite{Kumanogo-75} a theory for the symbol of multiple compositions of pseudo-differential operators with symbols of class $S^{m}_{\lambda,\rho,\delta}(\R^{2d})$ is developed. As mentioned in Section \ref{secIntroduction}, the class $S^m_{\lambda,\rho,\delta}(\R^{2d})$ is an extension of the H{\"o}rmander symbol classes to functions $\lambda:\R^d\rightarrow \R$ with the properties:
\begin{enumerate}
    \item[(a)]$1\lv \lambda(\xi)\lv \jb[|\cdot|^2]{\xi}$
    \item[(b)]$|\partial_{\xi}^{\alpha}\lambda(\xi)|\lv_{\alpha} \lambda(\xi)^{1-|\alpha|},\qquad \alpha\in\N_0^d$.
\end{enumerate}
Continuous negative definite functions are not weight functions in the above sense, i.e. we cannot set $\lambda(\xi):=\jb{\xi}$ as by Definition \ref{defSymbolClass}, property (b) is not satisfied. For this reason Hoh extended in \cite{Hoh-98} the symbolic calculus for weight functions to take into account negative definite functions. 

Consider the composition of two pseudo-differential operators with symbols $a_1\in S^{m_1,\psi}_{\rho_g}(\R^{2d})$ and $a_2\in S^{m_2,\psi}_{\rho_g}(\R^{2d})$, $g\in \{0,1,2\}$, in Kohn-Nirenberg quantization. The symbol of the composition can be written as an oscillatory integral:
\begin{align}
(a_1\#_{\textsc{kn}} a_2)(x,\xi)=\Os\int_{\R^{2d}} e^{-iy\cdot \eta}a_1(x,\xi+\eta)a_2(x+y,\xi)\text{\dj} y\text{\dj} \eta
\end{align}
In a straightforward manner it is possible to find a similar expression for the symbol of the composition of $k\in \N$ pseudo-differential operators. The following notational conventions are helpful: let $x^1, \ldots, x^k\in \R^d$ and  $\xi^1,\ldots, x^k\in \R^d$, then
\begin{align}
\wt{x}_k=(x^1,\ldots,x^k)\in \R^{kd}, \: \wt{\xi}_k=(\xi^1,\ldots,\xi^k)\in \R^{kd}.
\end{align}
Also,
\begin{align}
\wt{x}_k\cdot \wt{\xi}_k=\sum_{j=1}^k x^j\cdot \xi^j
\end{align}
and for $1\leq l\leq k$,
\begin{align}
\summ{x}{k}{l}=\sum_{j=1}^l x^j.
\end{align}
Moreover,
\begin{align}
\id \wt{x}_k\id \wt{\xi}_k=\id x^1\id \xi^1\cdots \id x^k\id \xi^k
\end{align}
and for multi-indices $\wt{\alpha}_k=(\alpha^1,\ldots,\alpha^k)\in \N_0^{kd}$, $\wt{\beta}_k=(\beta^1,\ldots,\beta^k)\in \N_0^{kd}$,
\begin{align}
\partial_{\wt{\xi}_k}^{\wt{\alpha}_k}D_{\wt{x}_k}^{\wt{\beta}_k}=\partial_{\xi^1}^{\alpha^1}\cdots \partial_{\xi^k}^{\alpha^k}D_{x^1}^{\beta^1}\cdots D_{x^k}^{\beta^k}
\end{align}
where $D^{\alpha}=(-i)^{|\alpha|} \partial^{\alpha}$. The symbol of the composition of $k$ pseudo-differential operators with symbols $a_j\in S^{m_j,\psi}_{\rho_g}(\R^{2d})$ is then given by
\begin{align}
&(a_1\#_{\textsc{kn}} \ldots \#_{\textsc{kn}} a_k)(x,\xi)\\
=&\Os\int_{R^{2(k-1)d}} e^{-i\wt{y}_{k-1}\cdot \wt{\eta}_{k-1}}a_1(x,\xi+\eta^1)
a_2(x+\summ{y}{k-1}{1},\xi+\eta^2)\cdots a_k(x+\summ{y}{k-1}{k-1},\xi)\text{\dj}\wt{y}_{k-1}\text{\dj}\wt{\eta}_{k-1}.
\end{align}
The theory of multiple symbols therefore includes for $k=2$ the theory of double symbols as given in \cite{Hoh-98} for negative definite functions. The remainder of this section is concerned with proving estimates for multiple negative definite symbols and follows \cite{Kumanogo-75}. Often the following extension of Peetre's inequality to negative definite functions is needed, cf. \cite{Jacob-02}.
\begin{lemma}\label{lemmaPeetre}
Let $\psi:\R^d\rightarrow \C$ be a negative definite function. Then 
\begin{align}
\frac{1+|\psi(\xi)|}{1+|\psi(\eta)|}\leq 2 (1+|\psi(\xi-\eta)|).
\end{align}
\end{lemma}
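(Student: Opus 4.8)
The plan is to recast the claimed two-sided estimate in the equivalent multiplicative form
\begin{align}\label{eqPeetreMult}
1+|\psi(\xi)|\leq 2\big(1+|\psi(\eta)|\big)\big(1+|\psi(\xi-\eta)|\big),
\end{align}
which is legitimate because $1+|\psi(\eta)|>0$, and then to deduce it from a single structural property of continuous negative definite functions, namely the sub-additivity of $|\psi(\cdot)|^{1/2}$,
\begin{align}\label{eqSubadd}
|\psi(\zeta_1+\zeta_2)|^{1/2}\leq |\psi(\zeta_1)|^{1/2}+|\psi(\zeta_2)|^{1/2}\qquad(\zeta_1,\zeta_2\in\R^d).
\end{align}
This ``triangle inequality'' is the classical core of Peetre's inequality in the negative definite category, and I would simply quote it from \cite{Jacob-02} (see also \cite{Jacob-01:2}); everything else is elementary.

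Granting \eqref{eqSubadd}, apply it with $\zeta_1=\xi-\eta$ and $\zeta_2=\eta$, so that $\zeta_1+\zeta_2=\xi$, and square, using $(p+q)^2\leq 2p^2+2q^2$ for $p,q\geq 0$:
\begin{align}
|\psi(\xi)|\leq\big(|\psi(\xi-\eta)|^{1/2}+|\psi(\eta)|^{1/2}\big)^2\leq 2|\psi(\xi-\eta)|+2|\psi(\eta)|.
\end{align}
Hence $1+|\psi(\xi)|\leq 1+2|\psi(\xi-\eta)|+2|\psi(\eta)|$, and since $1+2a+2b\leq 2+2a+2b+2ab=2(1+a)(1+b)$ for all $a,b\geq 0$, inequality \eqref{eqPeetreMult} follows with $a=|\psi(\xi-\eta)|$ and $b=|\psi(\eta)|$. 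Dividing by $1+|\psi(\eta)|$ gives the statement of the lemma.

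The only genuine obstacle is \eqref{eqSubadd}. I would not reprove it, but to indicate why it holds: the L\'evy--Khintchine representation \eqref{eqLevyKhintchine} exhibits $\Re\psi$ as the constant $\psi(0)\geq 0$ plus a continuous superposition of the elementary negative definite functions $\zeta\mapsto(b\cdot\zeta)^2$ and $\zeta\mapsto 1-\cos(y\cdot\zeta)$; for each of these the square root is sub-additive, by the triangle inequality in $\R$ in the first case and by $\sin(a+b)=\sin a\cos b+\cos a\sin b$ together with $|\cos|\leq 1$ in the second, and a Cauchy--Schwarz step transfers sub-additivity of the root from the pieces to $\sqrt{\Re\psi}$; the imaginary part $\Im\psi$ is handled by the analogous trigonometric manipulation. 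The constant $\psi(0)$ is harmless, since $\Re\psi\geq\psi(0)$ forces $\psi(0)\leq|\psi(\eta)|$, so it can always be majorised by the right-hand side. In short, once the classical sub-additivity input \eqref{eqSubadd} is in place the proof reduces to the few lines above, and that input is where all the content of the lemma sits.
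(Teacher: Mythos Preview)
Your argument is correct. The paper itself does not prove this lemma; it merely states it with a reference to \cite{Jacob-02}, so there is no ``paper's own proof'' to compare against. What you have written---reducing the claim to the sub-additivity of $|\psi|^{1/2}$ and then the two elementary inequalities $(p+q)^2\leq 2p^2+2q^2$ and $1+2a+2b\leq 2(1+a)(1+b)$---is exactly the standard derivation one finds in Jacob's books, so you have supplied precisely what the paper outsources.

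One small caveat: your informal sketch of why \eqref{eqSubadd} holds, via the L\'evy--Khintchine pieces and a ``Cauchy--Schwarz step'', is not quite the way the result is usually established and is imprecise as stated (sub-additivity of square roots does not pass through integral superpositions by Cauchy--Schwarz in the manner you suggest). The standard proof in \cite{Jacob-01:2,Jacob-02} proceeds directly from the definition of negative definiteness via an appropriate $3\times 3$ matrix positivity argument. Since you explicitly say you would cite the result rather than reprove it, this does not affect the validity of your main argument; just be aware that the heuristic paragraph should not be mistaken for a proof outline.
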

The following estimate is important for the existence of oscillatory integrals based on negative definite functions, cf. \cite{Jacob-02}.
\begin{lemma}\label{lemmaGrowth}
Any locally bounded negative definite function $\psi:\R^d\rightarrow \C$ satisfies the estimate
\begin{align}
|\psi(\xi)|\lv_{\psi} \jb[|\cdot|^2]{\xi}^2.
\end{align}
\end{lemma}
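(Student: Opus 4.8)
The plan is to exploit the L\'evy--Khintchine representation \eqref{eqLevyKhintchine} and bound each of the four contributions separately by a constant multiple of $\jb[|\cdot|^2]{\xi}^2 = (1+|\xi|^2)$. The constant term $c$ is trivially $\lv 1 \lv \jb[|\cdot|^2]{\xi}^2$; the linear term $|l\cdot\xi|\leq |l|\,|\xi| \lv \jb[|\cdot|^2]{\xi}^2$ by Cauchy--Schwarz and $|\xi|\leq \tfrac12(1+|\xi|^2)$; and the Gaussian part $|\tfrac12\xi\cdot Q\xi|\leq \tfrac12\|Q\|\,|\xi|^2 \lv \jb[|\cdot|^2]{\xi}^2$. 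So the whole matter reduces to estimating the integral term
\begin{align}
I(\xi):=\int_{\R^d\backslash\{0\}}\left|1-e^{i\xi\cdot y}+\frac{i\xi\cdot y}{1+|y|^2}\right|\nu(\id y).
\end{align}

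First I would split the domain of integration at $|y|=1$. On $\{|y|\geq 1\}$ the integrand is bounded by $2 + \frac{|\xi||y|}{1+|y|^2}\leq 2 + \tfrac12|\xi|\lv \jb[|\cdot|^2]{\xi}$, using $\frac{|y|}{1+|y|^2}\leq \tfrac12$; since $\nu(\{|y|\geq 1\})<\infty$ by the L\'evy condition $\int\min(|y|^2,1)\,\nu(\id y)<\infty$, this part of the integral is $\lv \jb[|\cdot|^2]{\xi}\lv \jb[|\cdot|^2]{\xi}^2$. On $\{0<|y|<1\}$ I would write $1-e^{i\xi\cdot y}+\frac{i\xi\cdot y}{1+|y|^2} = \bigl(1-e^{i\xi\cdot y}+ i\xi\cdot y\bigr) - i\xi\cdot y\cdot\frac{|y|^2}{1+|y|^2}$. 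The first bracket is bounded by $\tfrac12|\xi\cdot y|^2\leq\tfrac12|\xi|^2|y|^2$ via the elementary estimate $|1-e^{it}+it|\leq\tfrac12 t^2$; the second is bounded by $|\xi||y|\cdot|y|^2\leq |\xi|\,|y|^2$ on $|y|<1$. Hence on this region the integrand is $\lv (|\xi|^2+|\xi|)\,|y|^2 \lv \jb[|\cdot|^2]{\xi}^2\,|y|^2$, and integrating against $\nu$ over $\{|y|<1\}$ and again invoking $\int\min(|y|^2,1)\,\nu(\id y)<\infty$ gives the bound $\lv \jb[|\cdot|^2]{\xi}^2$ on that piece as well.

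Combining the four estimates yields $|\psi(\xi)|\lv_\psi \jb[|\cdot|^2]{\xi}^2$, with the implicit constant depending only on $c$, $|l|$, $\|Q\|$ and the finite quantities $\nu(\{|y|\geq 1\})$ and $\int_{\{|y|<1\}}|y|^2\,\nu(\id y)$, i.e.\ only on $\psi$. I do not anticipate a genuine obstacle here: the one point requiring a little care is matching the truncation $\frac{i\xi\cdot y}{1+|y|^2}$ appearing in \eqref{eqLevyKhintchine} with the cleaner truncation $i\xi\cdot y\mathbf{1}_{\{|y|<1\}}$ that makes the quadratic estimate transparent, which is exactly what the algebraic decomposition in the previous paragraph handles. (Local boundedness of $\psi$ is automatic from the representation, so the hypothesis is used only to ensure $\psi$ genuinely admits form \eqref{eqLevyKhintchine}.)
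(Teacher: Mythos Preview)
The paper does not actually prove this lemma; it simply cites \cite{Jacob-02}. So there is no ``paper's own proof'' to compare against, only the standard argument in the literature.

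Your computation via the L\'evy--Khintchine representation is clean and the inequalities are all correct. However, there is a genuine gap in the logic: the statement is formulated for \emph{locally bounded} negative definite functions, whereas the representation \eqref{eqLevyKhintchine} is only asserted (in this paper and in general) for \emph{continuous} negative definite functions. Your final parenthetical has the implication backwards---yes, any $\psi$ of the form \eqref{eqLevyKhintchine} is locally bounded, but you need the converse, and local boundedness alone does not give you continuity or the representation. So as written your argument proves the lemma only under the stronger hypothesis of continuity (which, to be fair, is the only case the paper ever uses).

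The standard proof that matches the stated hypothesis---and is almost certainly what the citation points to---avoids \eqref{eqLevyKhintchine} altogether and uses the elementary subadditivity $\sqrt{|\psi(\xi+\eta)|}\leq \sqrt{|\psi(\xi)|}+\sqrt{|\psi(\eta)|}$, valid for any negative definite function. From this, $\sqrt{|\psi(n\xi_0)|}\leq n\sqrt{|\psi(\xi_0)|}$, and writing a general $\xi$ as $n\xi_0$ with $|\xi_0|\leq 1$ and $n\lesssim 1+|\xi|$ immediately gives $|\psi(\xi)|\leq (1+|\xi|)^2\sup_{|\eta|\leq 1}|\psi(\eta)|\lv_\psi (1+|\xi|^2)$, using only local boundedness near the origin. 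This is shorter and covers the lemma as stated.
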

The next definition describes the class of multiple negative definite symbols.
\begin{definition}
Let a continuous negative definite function $\psi:\R^d\rightarrow \R$ satisfy for all multi-indices $\alpha\in \N_0^d$,
\begin{align}
|\partial_{\xi}^{\alpha}\jb{\xi}^2|\lv_{\alpha} \jb{\xi}^{2-\rho_2(|\alpha|)}.
\end{align}
Then for $\wt{m}_k=(m_1,\ldots,m_k)\in \R^k$, $k\in \N$, we say that a $C^{\infty}$-function
\begin{align}
a:\R^{kd}\times \R^{kd}\rightarrow \C, (\wt{x}_k,\wt{\xi}_k)\mapsto p(\wt{x}_k,\wt{\xi}_k):=p(x^1,\xi^1,\ldots,x^k,\xi^k)
\end{align}
belongs to the class of multiple symbols $S^{\wt{m}_k,\psi}_{\rho_g}(\R^{2kd})$, $g\in \{0,1,2\}$, if for any multi-indices, $\wt{\alpha}_k$, $\wt{\beta}_k\in \N_0^{kd}$, we have
\begin{align}
\left|\partial_{\bxi_k}^{\ba_k} D_{\bx_k}^{\bb_k} a(\bx_k,\bxi_k)\right|
&\lv_{\ba_k,\bb_k} \prod_{j=1}^k \jb{\xi^j}^{m_j-\rho_g(|\alpha^j|)}. 
\end{align}
\end{definition}
Using the semi-norms
\begin{align}
|a|_{l,l'}^{(\bm_k)}:=\max_{|\ba_{k}|\leq l,|\bb_{k}|\leq l'}\sup_{\bx_{k},\bxi_{k} \in \R^{kd}}\left(
\left|\partial_{\bxi_{k}}^{\ba_{k}} 
D_{\bx_{k}}^{\bb_{k}} a(\bx_k,\bxi_{k})\right|\prod_{j=1}^k
\jb{\xi^j}^{-m_j+\rho_g(|\alpha^j|)}\right),
\end{align}
where $l,l'\in \N_0$, the space $S^{\bm_{k},\psi}_{\rho_g}(\R^{2kd})$ is a Fr\'echet-space. The following Lemma is needed for the proof of Theorem \ref{theoremThetaEstimate}. 
\begin{lemma}\label{lemmaAux1}
Let $\psi:\R^d\rightarrow \R$ be a continuous negative definite function such that for all multi-indices $\alpha\in \N_0^d$,
\begin{align}\label{eqSection2_1}
|\partial_{\xi}^{\alpha}\jb{\xi}^2|\lv_{\alpha} \jb{\xi}^{2-\rho_2(|\alpha|)}.
\end{align}
Then there exists a constant $c_0>0$ such that
\begin{align}
\frac{1}{2}\jb{\xi}\leq \jb{\xi+\eta}\leq 2\jb{\xi}
\end{align}
for $|\eta|\leq c_0\jb{\xi}$.
\end{lemma}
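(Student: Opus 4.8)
The plan is to show that $\jb{\cdot}$ varies slowly, by bounding the increment $\jb{\xi+\eta}-\jb{\xi}$ using only the first-order derivative estimate contained in the hypothesis. Fix $\xi,\eta\in\R^d$ and set $F(t):=\jb{\xi+t\eta}^2=1+\psi(\xi+t\eta)$ for $t\in[0,1]$. Since $\psi$ is a real-valued continuous negative definite function, its L\'evy--Khintchine representation \eqref{eqLevyKhintchine} forces $\psi\ge 0$, so $F(t)\ge 1$; moreover \eqref{eqSection2_1} with $|\alpha|\le 1$ makes $\jb{\cdot}^2$ a $C^1$-function and yields $|\nabla_\xi\jb{\zeta}^2|\le C_1\jb{\zeta}$ for all $\zeta$, with $C_1$ absorbing the dimensional factor $\sqrt d$. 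Hence $F\in C^1([0,1])$ with $|F'(t)|=|\eta\cdot\nabla_\xi\jb{\xi+t\eta}^2|\le C_1|\eta|\sqrt{F(t)}$.

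First I would pass to $G(t):=\sqrt{F(t)}=\jb{\xi+t\eta}$, which is $C^1$ on $[0,1]$ because $F\ge 1$ is bounded away from zero; differentiating, $G'(t)=F'(t)/(2\sqrt{F(t)})$, so the bound on $F'$ gives $|G'(t)|\le C_1|\eta|/2$ uniformly in $t$. Integrating over $[0,1]$ yields
\[
\bigl|\jb{\xi+\eta}-\jb{\xi}\bigr|=|G(1)-G(0)|\le \tfrac{C_1}{2}\,|\eta|.
\]
Then I would simply take $c_0:=\min(1,C_1^{-1})$ (any positive $c_0\le C_1^{-1}$ works): for $|\eta|\le c_0\jb{\xi}$ the displayed estimate gives $|\jb{\xi+\eta}-\jb{\xi}|\le \tfrac12\jb{\xi}$, whence $\tfrac12\jb{\xi}\le\jb{\xi+\eta}\le\tfrac32\jb{\xi}\le 2\jb{\xi}$, which is the assertion.

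I do not expect a genuine obstacle here; the two points that warrant a sentence of care are (i) noting that the hypothesis already makes $\jb{\cdot}^2$ differentiable, so $F$ and then $G=\sqrt F$ are $C^1$, and (ii) recording that $\psi\ge 0$ (hence $F\ge 1$) so that the square root stays away from its singularity at the origin --- both are immediate from \eqref{eqLevyKhintchine} and \eqref{eqSection2_1}. One could bootstrap the argument to obtain constants arbitrarily close to $1$, but the factor $2$ demanded by the statement falls out directly from the estimate above.
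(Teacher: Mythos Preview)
Your proof is correct and follows essentially the same route as the paper: write $\jb{\xi+\eta}-\jb{\xi}$ via the fundamental theorem of calculus, use hypothesis \eqref{eqSection2_1} with $|\alpha|=1$ to obtain the uniform bound $|\partial_{\xi_j}\jb{\zeta}|\lv 1$, conclude $|\jb{\xi+\eta}-\jb{\xi}|\le c_1|\eta|$, and then choose $c_0$ accordingly. You are in fact slightly more careful than the paper, both in recording why $F\ge 1$ (so that $G=\sqrt{F}$ is $C^1$) and in picking $c_0\le C_1^{-1}$ so that the lower bound $\tfrac12\jb{\xi}$ actually follows; the paper's stated choice $c_0=c_1^{-1}$ only literally yields the upper bound.
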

\begin{proof}
We find $\jb{\xi+\eta}-\jb{\xi}=\sum_{j=1}^d \int_0^1 \eta_j \jb[\partial_{\xi_j} \psi]{\xi+\theta \eta}\id \theta$. Using \eqref{eqSection2_1} and Peetre's inequality for negative definite functions, cf. Lemma \ref{lemmaPeetre}, this can now be estimated as
\begin{align}
|\jb{\xi+\eta}-\jb{\xi}|\lv \sum_{j=1}^d \int_0^1|\eta_j \jb{\xi+\theta\eta}^0|\id \theta\leq c_1 |\eta|.
\end{align}
Setting $c_0:=(c_1)^{-1}$ concludes the proof.
\end{proof}

\begin{theorem}\label{theoremThetaEstimate}
Let $a\in S^{\wt{m}_k,\psi}_{\rho_g}(\R^{2kd})$, $g\in \{0,1,2\}$, be a multiple negative definite symbol and define the symbol $b_{\theta}$, $|\theta|\leq 1$, by
\begin{align}
&b_{\theta}(x,\xi)=\int_{\R^{2(k-1)d}}e^{-i \by_{k-1}\cdot \be_{k-1}}
 a(x,\xi+\theta \eta^1,x+\summ{y}{k-1}{1},\xi+\theta\eta^2,\ldots, x+\summ{y}{k-1}{k-1},\xi)\text{\dj} \by_{k-1}\text{\dj} \be_{k-1}.
\end{align}
Then there exists a constant $C>0$ dependent on $\sum_{j=1}^{k-1} |m_j|$ but independent of $k$ such that
\begin{align}\label{qthetaresult}
|b_{\theta}(x,\xi)| \lv C^{k+1} |a|_{l,l'}^{(\bm_k)} \jb{\xi}^{\overline{\bm}_k^{k}}.
\end{align} 
where 
\begin{align}
l=2\left\lceil \frac{d}{2}+1\right\rceil \quad\text{and}\quad l'=2\left\lceil \frac{d+\sum_{j=1}^{k-1}|m_j|}{2}+1\right\rceil.
\end{align}
\end{theorem}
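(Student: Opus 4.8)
The plan is to transplant Kumano-go's multiple-symbol scheme \cite{Kumanogo-75} to the negative definite class; the one step that needs genuinely new care is keeping every constant geometric in $k$, and this forces a particular layout of the integrations by parts. First I would regularise in the usual way: fix $\chi\in\mathcal S(\R^{2d})$ with $\chi(0,0)=1$, multiply the integrand of $b_\theta$ by $\prod_{j=1}^{k-1}\chi(\eps y^j,\eps\eta^j)$ to obtain an absolutely convergent $b_\theta^{\eps}$, and prove \eqref{qthetaresult} for $b_\theta^{\eps}$ with a constant independent of $\eps\in(0,1]$; then $\eps\downarrow0$ gives the theorem. The decisive preparatory move is the measure-preserving substitution $y^j\mapsto w^j:=x+\summ{y}{k-1}{j}$ for $j=1,\dots,k-1$, which turns the $x$-arguments of $a$ into the new integration variables, $x^{j+1}=w^j$. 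After it, each $w^j$ occurs in $a$ only through the single slot $x^{j+1}$, and each $\eta^j$ still occurs only through the single slot $\xi^j=\xi+\theta\eta^j$: this ``de-cumulation'' of the original arguments $x^{j}=x+\summ{y}{k-1}{j-1}$ is exactly what prevents a super-geometric blow-up of the term count.

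In the new variables the phase is $e^{-i\Phi}$ with $\Phi=\sum_{j=1}^{k-1}(w^j-w^{j-1})\cdot\eta^j$, $w^0:=x$, $\eta^k:=0$, and since $\Phi$ is affine in each variable,
\[
(1-\Delta_{w^j})e^{-i\Phi}=(1+|\eta^j-\eta^{j+1}|^2)e^{-i\Phi},\qquad (1-\Delta_{\eta^j})e^{-i\Phi}=(1+|w^j-w^{j-1}|^2)e^{-i\Phi}.
\]
I would integrate by parts by applying $(1-\Delta_{w^j})^{l'/2}$ and $(1-\Delta_{\eta^j})^{l/2}$ for each $j$. On $a$, $(1-\Delta_{w^j})^{l'/2}$ acts as $(1-\Delta_{x^{j+1}})^{l'/2}$ and produces the weight $(1+|\eta^j-\eta^{j+1}|^2)^{-l'/2}$, while $(1-\Delta_{\eta^j})^{l/2}$ acts as $(1-\theta^2\Delta_{\xi^j})^{l/2}$ and produces $(1+|w^j-w^{j-1}|^2)^{-l/2}=(1+|y^j|^2)^{-l/2}$; derivatives falling on the cut-offs only contribute harmless factors $\eps^{\le l+l'}\le1$. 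Because $w^j$ and $\eta^j$ each touch a single slot of $a$, each of these $2(k-1)$ operators expands by Leibniz into a number of terms bounded by a constant depending only on $d,l,l'$, so the whole procedure yields at most $C^{k-1}$ terms; each is a bounded multiple of a derivative $\partial_{\bxi_k}^{\ba_k}D_{\bx_k}^{\bb_k}a$ of order $\le l$ in each $\xi$-block and $\le l'$ in each $x$-block, hence dominated by $|a|^{(\bm_k)}_{l,l'}\prod_{j=1}^{k}\jb{\xi^j}^{m_j}$ (the $\rho_g$-gain being merely helpful).

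It remains to bound the resulting scalar integral. By Peetre's inequality for negative definite functions (Lemma \ref{lemmaPeetre}), the growth estimate $|\psi(\xi)|\lv\jb[|\cdot|^2]{\xi}^2$ (Lemma \ref{lemmaGrowth}), and Lemma \ref{lemmaAux1} to dispose of the regime in which the shift $\theta\eta^j$ does not exceed $\jb{\xi}$, one has $\jb{\xi+\theta\eta^j}^{m_j}\lv\jb{\xi}^{m_j}(1+|\eta^j|^2)^{|m_j|/2}$, so $\prod_{j=1}^{k}\jb{\xi^j}^{m_j}\lv\jb{\xi}^{\summ{m}{k}{k}}\prod_{j=1}^{k-1}(1+|\eta^j|^2)^{|m_j|/2}$. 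Substituting $\zeta^j:=\eta^j-\eta^{j+1}$ (so $\eta^j=\zeta^j+\dots+\zeta^{k-1}$) and $y^j:=w^j-w^{j-1}$, both measure preserving, and using $(1+|\eta^j|^2)^{|m_j|/2}\lv k^{|m_j|/2}\prod_{l\ge j}(1+|\zeta^l|^2)^{|m_j|/2}$, the integral factorises over $j$ into copies of $\int_{\R^d}(1+|y^j|^2)^{-l/2}\,\id y^j$ and $\int_{\R^d}(1+|\zeta^j|^2)^{-l'/2+\frac12\sum_{i=1}^{k-1}|m_i|}\,\id\zeta^j$; the accumulation of $\tfrac12\sum_{i=1}^{k-1}|m_i|$ here -- every shifted slot $\xi^j$ feeds a Peetre factor into each $\zeta^l$ with $l\ge j$ -- is precisely why $l'$ must carry $\sum_{j=1}^{k-1}|m_j|$. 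With $l=2\lceil d/2+1\rceil$ and $l'=2\lceil(d+\sum_{j=1}^{k-1}|m_j|)/2+1\rceil$ all these one-dimensional integrals converge and are bounded by a single constant depending only on $d$ and $\sum_{j=1}^{k-1}|m_j|$; collecting the $C^{k-1}$ terms, the $C^{2(k-1)}$ from the integrals, and the polynomial-in-$k$ factor $k^{\frac12\sum_{j}|m_j|}$ (absorbed into $C^{k}$), one arrives at $|b_\theta^{\eps}(x,\xi)|\le C^{k+1}|a|^{(\bm_k)}_{l,l'}\jb{\xi}^{\summ{m}{k}{k}}$ uniformly in $\eps$, and $\eps\downarrow0$ finishes the proof. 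I expect the one real obstacle to be exactly this uniform-in-$k$ bookkeeping: without the de-cumulating substitution and the one-slot-per-variable organisation of the integrations by parts, the Leibniz expansion through the cumulative arguments $x^{j}=x+\summ{y}{k-1}{j-1}$ generates more than $C^k$ terms and the bound \eqref{qthetaresult} is lost.
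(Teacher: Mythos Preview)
Your argument is correct and reaches the same conclusion, but the route you take for the $\eta$-integral is genuinely different from the paper's.

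Both you and the paper begin identically: integrate by parts in $\eta^j$ to produce the decaying $y$-weights, pass to the cumulative variables $w^j=x+\summ{y}{k-1}{j}$, and integrate by parts in $w^j$ to produce weights in the differences $\eta^j-\eta^{j+1}$. (A minor technical point you gloss over: when $(1-\Delta_{w^j})^{l'/2}$ acts after the first round, it also hits the two neighbouring $y$-weights $(1+|y^j|^2)^{-l/2}$ and $(1+|y^{j+1}|^2)^{-l/2}$, not just the cut-offs; but each derivative of such a weight is again $\lv(1+|y^j|^2)^{-l/2}$, so the term count stays $O(1)$ per $j$ and your $C^{k-1}$ bookkeeping survives.) Also, your appeal to Lemma~\ref{lemmaAux1} is superfluous at the point you invoke it: the bound $\jb{\xi+\theta\eta^j}^{m_j}\lv\jb{\xi}^{m_j}(1+|\eta^j|^2)^{|m_j|/2}$ follows directly from Peetre (Lemma~\ref{lemmaPeetre}) together with the growth estimate (Lemma~\ref{lemmaGrowth}), with no case distinction needed.

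The real divergence is in how the remaining $\eta$-integral is controlled. The paper does \emph{not} factorise: it integrates out the $\eta^{j_0}$ one at a time by induction, at each step splitting into three annular regions $\Omega_{j_0,1},\Omega_{j_0,2},\Omega_{j_0,3}$ according to the size of $|\eta^{j_0}-\eta^{j_0+1}|$ relative to $c_0$ and $c_0\jb{\xi+\theta\eta^{j_0+1}}$, and choosing the number $k_{j_0}$ of $w^{j_0}$-integrations by parts region by region (zero on $\Omega_{j_0,1}$, $l'/2$ elsewhere). On the inner regions Lemma~\ref{lemmaAux1} gives the sharp nearest-neighbour comparison $\frac12\jb{\xi+\theta\eta^{j_0+1}}\le\jb{\xi+\theta\eta^{j_0}}\le2\jb{\xi+\theta\eta^{j_0+1}}$, which is what propagates the weight $\jb{\xi+\theta\eta^{j_0}}^{\summ{m}{k}{j_0}}$ down the chain to $\jb{\xi}^{\summ{m}{k}{k}}$ with a constant $2^{\sum|m_j|}$ per step. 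Your route---a single global Peetre bound followed by the substitution $\zeta^j=\eta^j-\eta^{j+1}$ and complete factorisation---is more elementary and avoids both the induction and the region decomposition, at the cost of the cruder Peetre constant and the extra polynomial factor $k^{\frac12\sum|m_j|}$, which you correctly absorb into $C^k$. Either way one lands on $C^{k+1}|a|_{l,l'}^{(\bm_k)}\jb{\xi}^{\summ{m}{k}{k}}$ with $C$ depending only on $d$ and $\sum_{j=1}^{k-1}|m_j|$.
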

\begin{proof}
For $d_0 \in \N$ and $1\leq j\leq k-1$ note that
\begin{align}
e^{-iy^j\cdot \eta^j}=\big(1+|y^j|^{2d_0}\big)^{-1}\big(1+(-\Delta_{\eta^j})^{d_0}\big)e^{-iy^j\cdot \eta^j}.
\end{align}
Choosing $d_0=\frac{l}{2}$, i.e. $2 d_0>d$, it is possible to repeatedly integrate by parts to obtain
\begin{align}
b_{\theta}(x,\xi)&=\Os\int_{\R^{2(k-1)d}} e^{-i\by_{k-1}\cdot \be_{k-1}}
\left[\prod_{j=1}^{k-1}\big(1+|y^j|^{2 d_0}\big)^{-1}\right]\\ \times
& \left[\prod_{j=1}^{k-1}\big(1+(-\Delta_{\eta^j})^{d_0}\big)\right]a(x,\xi+\theta \eta^1,\ldots,x+\summ{y}{k-1}{k-1},\xi)\text{\dj} \by_{k-1} \text{\dj} \be_{k-1}.
\end{align}
Note that
\begin{align}
\by_{k-1}\cdot\be_{k-1}=\sum_{j=1}^{k-1}\summ{y}{k-1}{j}\cdot (\eta^j-\eta^{j+1}) \quad\text{and}\quad y^j=\summ{y}{k-1}{j}-\summ{y}{k-1}{j}
\end{align}
with $\eta^k:=0$. Hence, we can now write
\begin{align}
\begin{split}
b_{\theta}(x,\xi)&=\Os \int_{\R^{2(k-1)d}} e^{-i\sum_{j=1}^{k-1}
\summ{y}{k-1}{j}\cdot (\eta^j-\eta^{j+1})}
\left[\prod_{j=1}^{k-1}\big(1+|\summ{y}{k-1}{j}-\summ{y}{k-1}{j-1}|^{2 d_0}\big)^{-1}\right]\\ \label{eqThetaExp1} \times
& \left[\prod_{j=1}^{k-1}\big(1+(-\Delta_{\eta^j})^{d_0}\big)\right]a(x,\xi+\theta \eta^1,\ldots,x+\summ{y}{k-1}{k-1},\xi)\text{\dj} \by_{k-1} \text{\dj} \be_{k-1}.
\end{split}
\end{align}
As before, it holds for some $k_j\in \N$:
\begin{align*}
 e^{-i\sum_{j=1}^{k-1}
\summ{y}{k-1}{j}\cdot (\eta^j-\eta^{j+1})}=|\eta^j-\eta^{j+1}|^{-2k_j} (-\Delta_{\summ{y}{k-1}{j}})^{k_j}  e^{-i\sum_{j=1}^{k-1}
\summ{y}{k-1}{j}\cdot (\eta^j-\eta^{j+1})}.
\end{align*}
Making in \eqref{eqThetaExp1} the change of variables
\begin{align}
\by_{k-1}=(y^1,\ldots,y^{k-1})\mapsto \overline{\overline{\by}}_{k-1}=(\summ{y}{k-1}{1},\ldots, \summ{y}{k-1}{k-1}),
\end{align}
and integrating by parts, we find for $0\leq k_j:=k_j(\eta^j,\eta^{j+1})\leq \frac{l'}{2}$,
%
%
\begin{align*}
&b_{\theta}(x,\xi)= \Os \int_{\R^{2(k-1)d}} e^{-i\sum_{j=1}^{k-1}
\overline{\by}_{k-1}^{j}\cdot (\eta^j-\eta^{j+1})}\left(\prod_{j=1}^{k-1}|\eta^j-\eta^{j+1}|^{-2k_j}\right)\\
&\times \left[\prod_{j=1}^{k-1} (-\Delta_{\overline{\by}_{k-1}^j})^{k_j}\right]r_{\theta}(x,\xi;\be_{k-1},
\overline{\overline{\by}}_{k-1})\text{\dj} \overline{\overline{\by}}_{k-1}\text{\dj} \be_{k-1},
\end{align*}
where 
\begin{align*}
r_{\theta}(x,\xi;\be_{k-1},\overline{\overline{\by}}_{k-1})
=&\left[\prod_{j=1}^{k-1} \big(1+ |\overline{\by}^j_{k-1}-
\summ{y}{k-1}{j-1}|^{2d_0}\big)^{-1}\right]
\left[\prod_{j=1}^{k-1}\big(1+(-\Delta_{\eta^j})^{d_0}\big)\right]\\
&\mspace{20mu}\times a(x,\xi+\theta \eta^1,\ldots,x+\summ{y}{k-1}{k-1},\xi)
\end{align*}
with $\overline{\by}^0_{k-1}:=0$. Note that $l'$ was chosen in such a way that the integrals exist even when taking into account the growth of the symbol in the integrands. Using the change of variables $\overline{\by}_{k-1}^{j}\mapsto w^j:=\overline{\by}_{k-1}^j-\overline{\by}_{k-1}^{j-1}$ we also find for some constant $C>0$ that
\begin{align}
\int_{\R^d}\big(1+|\overline{\by}_{k-1}^{j}-
\overline{\by}_{k-1}^{j-1}|^{2d_0}\big)^{-1}\text{\dj}\summ{y}{k-1}{j}\leq C.
\end{align}
Hence there exists a constant $C_1>0$ depending on $l$ such that
\begin{align}\label{qthetaestimate}
|b_{\theta}(x,\xi)|\lv& C_1^{k+1} |a|_{l,l'}^{(\bm_k)} \jb{\xi}^{m_k}\int_{\R^{(k-1)d}} 
\prod_{j=1}^{k-1} |\eta^j-\eta^{j+1}|^{-2k_j} \jb{\xi+\theta \eta^j}^{m_j}\text{\dj} 
\be_{k-1}.
\end{align}
The multiple symbol $a(x,\xi+\theta\eta^1,\ldots,x+\overline{\by}_{k-1}^{k-1},\xi)$ has $k-1$ arguments depending on $\eta^j$, $1\leq j\leq k-1$. The factor $\jb{\xi}^{m_k}$ corresponds to the growth of $a$ with respect to the last argument $\xi \in\R^d$. Next, set
\begin{align}
A_{j_0}:=\int_{\R^{j_0 d}}   \prod_{j=1}^{j_0} |\eta^j-\eta^{j+1}|^{-2k_j} \jb{\xi+\theta \eta^j}^{m_j}\text{\dj} 
\be_{j_0}.
\end{align}
The aim is to prove by induction that for there exists a constant $C_2>0$ independent of $j_0$ and $k$ such that
\begin{align}\label{Aj0_induction}
A_{j_0}\leq C_2^{j_0+1} \jb{\xi+\theta \eta^{j_0+1}}^{\overline{\bm}_k^{j_0}},
\end{align}
where $j_0=1,\ldots, k-1$ and $\eta^k=0$. Setting $j_0=k-1$ and using this estimate in \eqref{qthetaestimate} it holds that
\begin{align}
|b_{\theta}(x,\xi)|\lv C^k |a|_{l,l'}^{(\bm_k)} \jb{\xi}^{m_k} \jb{\xi+\theta\eta^k}^{\overline{\bm}_{k}^{k-1}}\lv C^k |a|_{l,l'}^{(\bm_k)} \jb{\xi}^{\overline{\bm}_k^k}
\end{align}
which is \eqref{qthetaresult}. For the induction, set
\begin{align}
&\Omega_{j,1}:=\{\eta^j\in \R^d:|\eta^j-\eta^{j+1}|\leq c_0\}\\
&\Omega_{j,2}:=\{\eta^j\in \R^d:c_0\leq |\eta^j-\eta^{j+1}|\leq c_0\jb{\xi+\theta\eta^{j+1}}\}\\
&\Omega_{j,3}:=\{\eta^j\in \R^d: |\eta^j-\eta^{j+1}|\geq c_0\jb{\xi+\theta\eta^{j+1}}\}
\end{align}
as well as
\begin{align}
k_j:=\begin{cases}
 0 &,\eta^j\in \Omega_{j,1},\\
\frac{l'}{2} &,\eta^j\in \Omega_{j,2}\cup \Omega_{j,3}.
\end{cases}
\end{align}
Assuming that \eqref{Aj0_induction} is true for $j_0-1$, it follows:
\begin{align}
A_{j_0}&=\intrd A_{j_0-1} |\eta^{j_0}-\eta^{j_0+1}|^{-2k_{j_0}} \jb{\xi+\theta \eta^{j_0}}^{m_{j_0}}\text{\dj} \eta^{j_0}\\
&\leq C_2^{j_0} \intrd |\eta^{j_0}-\eta^{j_0+1}|^{-2k_{j_0}} \jb{\xi+\theta \eta^{j_0}}^{\overline{\bm}_k^{j_0-1}} \jb{\xi+\theta\eta^{j_0}}^{m_{j_0}}\text{\dj} \eta^{j_0}\\
&=C_2^{j_0}\int_{\Omega_{j_0,1}\cup \Omega_{j_0,2}\cup \Omega_{j_0,3}} |\eta^{j_0}-\eta^{j_0+1}|^{-2k_{j_0}} \jb{\xi+\theta\eta^{j_0}}^{\overline{\bm}_k^{j_0}}\text{\dj} \eta^{j_0}.
\end{align}
On $\Omega_{j,1}\cup \Omega_{j,2}$ we have
$$
\theta|\eta^j-\eta^{j+1}|\leq c_0 \jb{\xi+\theta \eta^{j+1}},
$$
and thus by Lemma \ref{lemmaAux1} 
\begin{align}\label{etaJEstimate}
\frac{1}{2}\jb{\xi+\theta\eta^{j+1}}\leq \jb{\xi+\theta\eta^j}\leq 2\jb{\xi+\theta\eta^{j+1}}.
\end{align}
Looking at the proof of Lemma \ref{lemmaAux1} it furthermore holds 
\begin{align}
\jb{\xi+\theta\eta^j}-\jb{\xi+\theta\eta^{j+1}}\lv |\eta^j-\eta^{j+1}| 
\end{align}
i.e. on $\Omega_{j,3}$,
\begin{align}\label{etaJEstimate1}
\jb{\xi+\theta\eta^j}\lv |\eta^j-\eta^{j+1}|.
\end{align}
From \eqref{etaJEstimate} and \eqref{etaJEstimate1} we find
\begin{align}
&\int_{\Omega_{j_0,1}} |\eta^{j_0}-\eta^{j_0+1}|^{-2k_{j_0}} \jb{\xi+\theta\eta^{j_0}}^{\overline{\bm}_k^{j_0}} \text{\dj} \eta^{j_0}\\
\leq\:& 2^{\sum_{j=1}^{j_0}|m_j|}
\jb{\xi+\theta\eta^{j_0+1}}^{\overline{\bm}_k^{j_0}} \int_{\Omega_{j_0,1}} \text{\dj} \eta^{j_0}\\
\lv\:& 2^{\sum_{j=1}^{k-1}|m_j|}
\jb{\xi+\theta\eta^{j_0+1}}^{\overline{\bm}_k^{j_0}}
\end{align}
as well as
\begin{align}
&\int_{\Omega_{j_0,2}} |\eta^{j_0}-\eta^{j_0+1}|^{-2k_{j_0}} \jb{\xi+\theta\eta^{j_0}}^{\overline{\bm}_k^{j_0}} \text{\dj} \eta^{j_0}\\
\leq\:& 2^{\sum_{j=1}^{j_0}|m_j|} \jb{\xi+\theta\eta^{j_0+1}}^{\overline{\bm}_k^{j_0}} \int_{\Omega_{j_0,2}} |\eta^{j_0}-\eta^{j_0+1}|^{-l'}\text{\dj} \eta^{j_0}\\
\leq\:& 2^{\sum_{j=1}^{j_0}|m_j|} \jb{\xi+\theta\eta^{j_0+1}}^{\overline{\bm}_k^{j_0}} \int_{\Omega_{j_0,2}\cup \Omega_{j_0,3}} |\eta^{j_0}-\eta^{j_0+1}|^{-l'}\text{\dj} \eta^{j_0}\\
\lv\:& 2^{\sum_{j=1}^{k-1}|m_j|} \jb{\xi+\theta\eta^{j_0+1}}^{\overline{\bm}_k^{j_0}}
\end{align}
and
\begin{align}
&\int_{\Omega_{j_0,3}} |\eta^{j_0}-\eta^{j_0+1}|^{-2k_{j_0}} \jb{\xi+\theta\eta^{j_0}}^{\overline{\bm}_k^{j_0}} \text{\dj} \eta^{j_0}\\
\leq\:& C_3^{\sum_{j=1}^{j_0}|m_j|} \int_{\Omega_{j_0,3}}|\eta^{j_0}-\eta^{j_0+1}|^{-l'+(\overline{\bm}_k^{j_0})_+}\text{\dj} \eta^{j_0},\\
\lv\:& C_3^{\sum_{j=1}^{j_0}|m_j|} \jb{\xi+\theta \eta^{j_0+1}}^{-l'+(\overline{\bm}_k^{j_0})_+ + d},\\
\lv\:& C_3^{\sum_{j=1}^{k-1}|m_j|} \jb{\xi+\theta \eta^{j_0+1}}^{\overline{\bm}_k^{j_0}},
\end{align}
where $(\overline{\bm}_k^{j_0})_+=\max(\overline{\bm}_k^{j_0},0)$. Finally, choosing 
$$
C_2>2\cdot 2^{\sum_{j=1}^{k-1}|m_j|}+C_3^{\sum_{j=1}^{k-1}|m_j|}
$$
if follows
\begin{align}
A_{j_0}\leq& C_2^{j_0}\cdot \left(2\cdot 2^{\sum_{j=1}^{k-1}|m_j|}+C_3^{\sum_{j=1}^{k-1}|m_j|}\right)\jb{\xi+\theta\eta^{j_0+1}}^{\overline{\bm}_k^{j_0}}\\
\leq \:& C_2^{j_0+1}\jb{\xi+\theta\eta^{j_0+1}}^{\overline{\bm}_k^{j_0}}
\end{align}
which concludes the induction.
\end{proof}
The case $k=2$ in Theorem \ref{theoremThetaEstimate} is of special importance and is stated separately in the Corollary below (note that this also follows from the theory of double symbols).
\begin{corollary}\label{corollaryThetaSeminorm}
Let $a_1 \in S^{m_1,\psi}_{\rho_g}(\R^{2d})$ and 
$a_2\in S^{m_2,\psi}_{\rho_g}(\R^{2d})$, $g\in \{0,1,2\}$. Define $b_{\theta}$, $|\theta|\leq 1$,
by
\begin{equation}
b_{\theta}(x,\xi)=\Os \int_{\R^{2d}}e^{-iy\cdot\eta} a_1(x,\xi+\theta\eta)
a_2(x+y,\xi)\text{\dj} y\text{\dj} \eta.
\end{equation}
Then $\{b_{\theta}(x,\xi)\}_{|\theta|\leq 1}$ is a bounded set in
$S^{m_1+m_2,\psi}_{\rho_g}(\R^{2d})$. Furthermore, for any $l$, there exists
a constant $A_l'$ and an integer $l'$ independent of $\theta$ such
that
\begin{equation}
|b_{\theta}|_l^{(m_1+m_2)}\leq A_l'|a_1|_{l'}^{(m_1)}|a_2|_{l'}^{(m_2)}.
\end{equation}
\end{corollary}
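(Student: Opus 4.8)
The plan is to derive Corollary \ref{corollaryThetaSeminorm} from Theorem \ref{theoremThetaEstimate} by applying the latter not to $a_1$ and $a_2$ themselves but to their $\xi$- and $x$-derivatives, and then collecting the resulting bounds into the seminorm estimate. Indeed, setting $k=2$ in Theorem \ref{theoremThetaEstimate} with the multiple symbol $a(x^1,\xi^1,x^2,\xi^2) := a_1(x^1,\xi^1)a_2(x^2,\xi^2) \in S^{(m_1,m_2),\psi}_{\rho_g}(\R^{4d})$ gives precisely $|b_\theta(x,\xi)| \lesssim C^3 |a|_{l,l'}^{(\bm_2)} \jb{\xi}^{m_1+m_2}$ with $l = 2\lceil d/2+1\rceil$ and $l' = 2\lceil (d+|m_1|)/2 + 1\rceil$, and the product structure gives $|a|_{l,l'}^{(\bm_2)} \leq |a_1|_{\max(l,l')}^{(m_1)}|a_2|_{\max(l,l')}^{(m_2)}$ by the Leibniz rule (each $\xi$- or $x$-derivative of the product splits into derivatives of the factors, and the weights $\jb{\xi}^{-m_j+\rho_g(|\alpha^j|)}$ factor accordingly). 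This handles the pointwise bound, i.e. the case of the $0$-th order seminorm.

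The key observation for the higher seminorms is that differentiation commutes with the oscillatory integral defining $b_\theta$ in a controlled way. First I would show that $\partial_\xi^\alpha D_x^\beta b_\theta(x,\xi)$ can again be written as an oscillatory integral of the same type: differentiating under the oscillatory-integral sign (justified by the standard regularization/dominated-convergence argument for oscillatory integrals, valid here since all integrands lie in the relevant symbol classes) and using the chain rule on $a_1(x,\xi+\theta\eta)a_2(x+y,\xi)$, every $\partial_\xi$ hitting the first factor produces a $\theta$-power times $(\partial_\xi a_1)(x,\xi+\theta\eta)$, every $\partial_\xi$ hitting the second factor produces $(\partial_\xi a_2)(x+y,\xi)$, and $D_x$-derivatives distribute similarly. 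Since $|\theta|\leq 1$, the $\theta$-powers are harmless. Thus $\partial_\xi^\alpha D_x^\beta b_\theta$ is a finite sum (with binomial coefficients depending only on $\alpha,\beta$) of integrals
\begin{align}
\Os\int_{\R^{2d}} e^{-iy\cdot\eta} (\partial_\xi^{\alpha_1} D_x^{\beta_1} a_1)(x,\xi+\theta\eta)\,(\partial_\xi^{\alpha_2} D_x^{\beta_2} a_2)(x+y,\xi)\,\text{\dj} y\,\text{\dj}\eta,
\end{align}
where $\alpha_1+\alpha_2 = \alpha$, $\beta_1+\beta_2=\beta$. Here $\partial_\xi^{\alpha_i}D_x^{\beta_i}a_i \in S^{m_i - \rho_g(|\alpha_i|),\psi}_{\rho_g}(\R^{2d})$, so each such integral is again of the form to which Theorem \ref{theoremThetaEstimate} (with $k=2$) applies.

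Applying that theorem termwise then yields, for each term, a bound of the shape $C^3 \,|\partial_\xi^{\alpha_1}D_x^{\beta_1}a_1|_{l'}^{(m_1-\rho_g(|\alpha_1|))}\,|\partial_\xi^{\alpha_2}D_x^{\beta_2}a_2|_{l'}^{(m_2-\rho_g(|\alpha_2|))}\,\jb{\xi}^{(m_1-\rho_g(|\alpha_1|)) + (m_2-\rho_g(|\alpha_2|))}$ with an appropriate $l'$. The point is now that $\rho_g(|\alpha_1|) + \rho_g(|\alpha_2|) \geq \rho_g(|\alpha_1+\alpha_2|) = \rho_g(|\alpha|)$ (subadditivity of $\rho_g$, since $\rho_g(a+b) = \min(a+b,g) \leq \min(a,g)+\min(b,g)$), so the exponent of $\jb{\xi}$ is at most $m_1+m_2-\rho_g(|\alpha|)$, which is exactly the decay required for $b_\theta$ to lie in $S^{m_1+m_2,\psi}_{\rho_g}(\R^{2d})$. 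Summing the finitely many terms, taking the supremum over $|\alpha|\leq l$, $|\beta|\leq l'$, and absorbing $\jb{\xi}^{-m_1-m_2+\rho_g(|\alpha|)}$ converts these into the seminorm estimate $|b_\theta|_l^{(m_1+m_2)} \leq A_l' |a_1|_{l'}^{(m_1)}|a_2|_{l'}^{(m_2)}$ with $A_l'$ and $l'$ independent of $\theta$; boundedness of $\{b_\theta\}_{|\theta|\leq 1}$ in $S^{m_1+m_2,\psi}_{\rho_g}(\R^{2d})$ is then immediate.

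The main obstacle I anticipate is the rigorous justification of differentiating under the oscillatory-integral sign and of the claim that the decay $\jb{\xi}^{-\rho_g(|\alpha_i|)}$ of the derivative $\partial_\xi^{\alpha_i}a_i$ is genuinely inherited by the composition — i.e. that when $\alpha_1$ hits $a_1$ one really gains $\rho_g(|\alpha_1|)$ powers of $\jb{\xi+\theta\eta}$ and that, after the $\eta$-integration, this converts to a gain in $\jb{\xi}$ rather than being lost to the growth of the phase-induced weights. This is precisely the content of the $A_{j_0}$-induction inside the proof of Theorem \ref{theoremThetaEstimate} (Lemma \ref{lemmaAux1} is what lets one trade $\jb{\xi+\theta\eta}$ for $\jb{\xi}$ on the bulk region, and the $\Omega_{j,3}$ estimate controls the tail), so for $k=2$ it is already packaged; the only care needed is to track that the $l'$ appearing does not depend on $\theta$, which it does not since the constants $c_0, c_1$ in Lemma \ref{lemmaAux1} are $\theta$-free and $|\theta|\leq 1$ throughout.
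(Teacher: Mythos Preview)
Your proposal is correct and follows essentially the same approach as the paper: specialize Theorem~\ref{theoremThetaEstimate} to $k=2$ with $a(x^1,\xi^1,x^2,\xi^2)=a_1(x^1,\xi^1)a_2(x^2,\xi^2)$, then handle derivatives by Leibniz and reapply the theorem termwise (this is exactly the mechanism the paper makes explicit in the proof of Theorem~\ref{kcompositionestimate}). One minor imprecision worth flagging: your assertion that $\partial_\xi^{\alpha_i}D_x^{\beta_i}a_i \in S^{m_i-\rho_g(|\alpha_i|),\psi}_{\rho_g}(\R^{2d})$ is not quite right---after a $\xi$-derivative the cut-off index drops (e.g.\ $\partial_\xi^{\alpha_i}a_i \in S^{m_i-\rho_g(|\alpha_i|),\psi}_{\rho_{g'}}$ with $g'=\max(0,g-|\alpha_i|)$, as the paper notes after Definition~\ref{defSymbolClass})---but this is harmless, since the pointwise conclusion of Theorem~\ref{theoremThetaEstimate} depends only on the orders $m_j$ and not on $g$, and your subadditivity argument for $\rho_g$ still delivers the required decay $\jb{\xi}^{m_1+m_2-\rho_g(|\alpha|)}$.
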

\begin{proof}
This is Theorem \ref{theoremThetaEstimate} for $k=2$, $\bm_2=(m_1,m_2)$ and
$$
a(x,\xi+\theta\eta,x+y,\xi):=a_1(x,\xi+\theta \eta)a_2(x+y,\xi).
$$
\end{proof}
\begin{theorem}\label{kcompositionestimate}
Consider the symbol $a\in S^{\bm_k,\psi}_{\rho_g}$, $g\in \{0,1,2\}$, and define
\begin{align}\label{boscint}
\begin{split}
b(x,\xi):=&\Os \int_{\R^{2(k-1)d}} e^{-i\by_{k-1}\cdot \be_{k-1}}\\
&\mspace{20mu}\times a(x,\xi+\eta^1,x+ \summ{y}{k-1}{1},\ldots,\xi+\eta^{k-1},x+\summ{y}{k-1}{k-1},\xi)\text{\dj} \by_{k-1}\text{\dj} \be_{k-1}.
\end{split}
\end{align}
Then $b\in S^{\summ{m}{k}{k},\psi}_{\rho_g}$ and for any $l,l'\in \N$ there exists a constant $C>0$ such that
\begin{align}
|b|_{l,l'}^{(\summ{m}{k}{k})}\leq C^k |a|_{l_0,l_0'}^{(\bm_k)}.
\end{align}
\end{theorem}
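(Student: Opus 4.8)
The plan is to derive Theorem \ref{kcompositionestimate} from the pointwise bound of Theorem \ref{theoremThetaEstimate} (applied with $\theta=1$, in which case the $b_\theta$ there is exactly the $b$ of \eqref{boscint}). Since Theorem \ref{theoremThetaEstimate} controls only the oscillatory integral itself and not its $(x,\xi)$-derivatives, the first move is to transfer all derivatives $\partial_\xi^\alpha D_x^\beta$ off $b$ and onto the multiple symbol $a$. In \eqref{boscint} the variable $x$ appears only in the $k$ translated arguments $x,x+\summ{y}{k-1}{1},\ldots,x+\summ{y}{k-1}{k-1}$ and $\xi$ only in the $k$ arguments $\xi+\eta^1,\ldots,\xi+\eta^{k-1},\xi$. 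Hence, using the oscillatory integral calculus for negative definite amplitudes from this section to differentiate under the integral sign, $b$ is $C^\infty$ and, by Leibniz' rule,
\begin{align}
\begin{split}
\partial_\xi^\alpha D_x^\beta b(x,\xi)
=\sum_{\substack{\alpha^1+\cdots+\alpha^k=\alpha\\\beta^1+\cdots+\beta^k=\beta}} c_{\ba_k,\bb_k}\,\Os\int_{\R^{2(k-1)d}} & e^{-i\by_{k-1}\cdot\be_{k-1}}\\
&\times\big(\partial_{\bxi_k}^{\ba_k}D_{\bx_k}^{\bb_k}a\big)(x,\xi+\eta^1,\ldots,\xi)\,\text{\dj}\by_{k-1}\,\text{\dj}\be_{k-1},
\end{split}
\end{align}
with bounded multinomial coefficients $c_{\ba_k,\bb_k}$. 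In any term of this sum at most $|\alpha|$ of the multi-indices $\alpha^j$ and at most $|\beta|$ of the $\beta^j$ are non-zero, so the number of terms is bounded by $C_{\alpha,\beta}\,k^{|\alpha|+|\beta|}$, which is polynomial in $k$.

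Each summand is again an oscillatory integral of exactly the shape \eqref{boscint}, with $a$ replaced by $\partial_{\bxi_k}^{\ba_k}D_{\bx_k}^{\bb_k}a$. The second step is to check that this differentiated amplitude still belongs to the scale of multiple negative definite symbols: by the defining estimates and the monotonicity of $\rho_g$ it is a multiple symbol of orders $m_j'=m_j-\rho_g(|\alpha^j|)$ (with the cut-off possibly weakened, e.g. to $\rho_0$), and the seminorm of it that Theorem \ref{theoremThetaEstimate} calls for is bounded by $|a|_{l_0,l_0'}^{(\bm_k)}$ for $l_0,l_0'$ large enough (depending only on $l$, $l'$, $d$ and $\sum_{j}|m_j|$). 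Applying Theorem \ref{theoremThetaEstimate} to each summand then yields a bound by $C^{k+1}\,|a|_{l_0,l_0'}^{(\bm_k)}\,\jb{\xi}^{\sum_{j=1}^k(m_j-\rho_g(|\alpha^j|))}$, with $C$ as in that theorem. Since $\rho_g$ is subadditive, $\sum_{j=1}^k\rho_g(|\alpha^j|)\ge\rho_g(|\alpha|)$ whenever $\alpha^1+\cdots+\alpha^k=\alpha$, and because $\jb{\xi}\ge1$ the resulting exponent is at most $\summ{m}{k}{k}-\rho_g(|\alpha|)$ --- exactly the decay required for membership in $S^{\summ{m}{k}{k},\psi}_{\rho_g}(\R^{2kd})$.

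Finally I would collect the estimates. Summing the $C_{\alpha,\beta}\,k^{|\alpha|+|\beta|}$ contributions gives
\begin{align}
\big|\partial_\xi^\alpha D_x^\beta b(x,\xi)\big|\le C_{\alpha,\beta}\,k^{|\alpha|+|\beta|}\,C^{k+1}\,|a|_{l_0,l_0'}^{(\bm_k)}\,\jb{\xi}^{\summ{m}{k}{k}-\rho_g(|\alpha|)},
\end{align}
and restricting to $|\alpha|\le l$, $|\beta|\le l'$ and using $k^{N}\le(e^{N})^{k}$ for $k\ge1$ absorbs the polynomial factor $k^{|\alpha|+|\beta|}$ into a new exponential; with $l_0,l_0'$ as above and a suitable constant $C$ (independent of $k$) this yields both $b\in S^{\summ{m}{k}{k},\psi}_{\rho_g}(\R^{2kd})$ and $|b|_{l,l'}^{(\summ{m}{k}{k})}\le C^{k}\,|a|_{l_0,l_0'}^{(\bm_k)}$. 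The step that needs the most care is the second one: one must track, slot by slot, how $\xi$-differentiation both shifts the orders $m_j$ and degrades the cut-off $\rho_g$ --- this is exactly where the fact that the decay of $\xi$-derivatives improves only up to order $2$ enters --- and then verify that, after this degradation, the differentiated symbol still lies in a class to which Theorem \ref{theoremThetaEstimate} is applicable and that the order parameters it demands remain under control. The $x$-derivatives are by comparison harmless, since $D_x$ does not change the order of a symbol.
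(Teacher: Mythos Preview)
Your proposal is correct and follows essentially the same route as the paper: pass the derivatives $\partial_\xi^\alpha D_x^\beta$ through the oscillatory integral onto $a$, expand by the chain rule into at most $k^{|\alpha+\beta|}$ terms, and then apply Theorem \ref{theoremThetaEstimate} with $\theta=1$ to each term. You are in fact more careful than the paper's terse argument in tracking the shift of orders $m_j\mapsto m_j-\rho_g(|\alpha^j|)$ and invoking the subadditivity of $\rho_g$ to recover the required $\jb{\xi}^{\summ{m}{k}{k}-\rho_g(|\alpha|)}$ decay, and in noting that $\sum_j|m_j'|$ stays bounded independently of $k$ so that the constants from Theorem \ref{theoremThetaEstimate} remain uniform.
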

\begin{proof}
Take $\chi(x,\xi)\in \mathcal{S}(\R^{2d})$ such that $\chi(0,0)=1$ and set
\begin{align}
\chi_{\varepsilon}(\by_k,\bxi_k):=\chi(\varepsilon y^1,\varepsilon \xi^1)\cdots \chi(\varepsilon y^k,\varepsilon \xi^k).
\end{align}
Using the definition of oscillatory integrals we can then write
\begin{align}
&a(x,D_x)u(x)\\
=\:&\Os \int_{\R^{2kd}} e^{-i\by_k\cdot \bxi_k} a(x,\xi^1,x+\overline{\by}_k^1,\ldots,\xi^k)u(x+\overline{\by}_k^k)\text{\dj} \by_k\text{\dj} \bxi_k\\
=\:&\lim_{\varepsilon\rightarrow 0}\int_{\R^{2kd}} e^{-i\by_k\cdot \bxi_k}\chi_{\varepsilon}(\by_k,\bxi_k) a(x,\xi^1,x+\overline{\by}_k^1,\ldots,\xi^k)u(x+\overline{\by}_k^k)\text{\dj} \by_k\text{\dj} \bxi_k.
\end{align}
By the change of variables $(y^k;\xi^1,\ldots,\xi^{k-1},\xi^k)\mapsto (x';\eta^1,\ldots,\eta^{k-1},\xi)$, where
\begin{align} (x';\eta^1,\ldots,\eta^{k-1},\xi):=(x+\overline{\by}_{k-1}^{k-1}+y^k;\xi^1-\xi^k,\ldots, \xi^{k-1}-\xi^k,\xi^k),
\end{align}
i.e. $\xi^j=\eta^j+\xi^k=\eta^j+\xi$, $1\leq j\leq k-1$ and $y^k=x'-x-\summ{y}{k-1}{k-1}$,
it follows
\begin{align}
&a(x,D)u(x)=\lim_{\varepsilon\rightarrow 0}\int_{\R^{2d}}e^{i(x-x')\cdot \xi}\\
&\times \Bigg[\iint_{\R^{2(k-1)d}} e^{-i\by_{k-1}\cdot \bxi_{k-1}}\chi_{\varepsilon}(y^1,\xi+\eta^1)\cdots \chi_{\varepsilon}(y^{k-1},\xi+\eta^{k-1}) \\
&\times \chi_{\varepsilon}(x'-x-\overline{\by}_{k-1}^{k-1},\xi) a(x,\xi+\eta^1,x+\overline{\by}_{k-1}^1,\ldots,\xi+\eta^{k-1},x+\overline{\by}_{k-1}^{k-1},\xi)\text{\dj} \by_{k-1}\text{\dj} \be_{k-1}\Bigg]\\
&\times u(x')\text{\dj} x' \text{\dj} \xi.
\end{align}
Comparing this expression with \eqref{boscint}, it is clear that $A(x,D_x)=B(x,D_x)$. Moreover, for any $\alpha,\beta \in \N_0^d$:
\begin{align}
&\partial_{\xi}^{\alpha}\partial_x^{\beta} b(x,\xi)=(2\pi)^{-(k-1)d}\Os \int_{\R^{2(k-1)d}} e^{-i \by_{k-1}\cdot \be_{k-1}}\\
&\times \partial_{\xi}^{\alpha}\partial_x^{\beta} a(x,\xi+\eta^1,\ldots,x+\overline{\by}_{k-1}^{k-1},\xi)\id \by_{k-1}\id \be_{k-1},
\end{align}
Elementary calculus yields that $\partial_{\xi}^{\alpha}\partial_x^{\beta}a$ can be expressed as a sum of $k^{|\alpha+\beta|}$ terms of symbols of class
$S^{\bm_k,\psi}_g(\R^{2kd})$. Hence, applying Theorem \ref{theoremThetaEstimate} with $\theta=1$
to $S^{\bm_k,\psi}_g(\R^{2kd})$, we obtain
\begin{align}
|\partial_{\xi}^{\alpha}\partial_x^{\beta} b(x,\xi)|\leq k^{|\alpha+\beta|} C^k |a|_{l_0,l_0'}^{(\bm_k)} \jb{\xi}^{\overline{\bm}_k}
\end{align}
for $l_0,l_0'\in\N$ large enough.
\end{proof}
\begin{theorem}\label{multiProduct}
Let $M>0$ be given and assume $\bm=(m_n)_{n\in  \N}$
is a sequence of real numbers satisfying
\begin{equation}
\sum_{n=1}^{\infty} |m_n|\leq M<\infty.
\end{equation}
For any $k\in\N$, $g\in \{0,1,2\}$ and $a_j\in S^{m_j,\psi}_{\rho_g}$, $1\leq j\leq k$, 
there exists a symbol
\begin{equation}
b_{k}\in S^{\overline{\bm}^k,\psi}_{\rho_g}(\R^{2d})
\end{equation}
where $\overline{\bm}^{k}:=\sum_{j=1}^{k} m_j$ such that
\begin{equation}
\begin{split}
b_{k}(x,D_x)=(a_1 \#_{\textsc{kn}} a_2\#_{\textsc{kn}}\cdots\#_{\textsc{kn}} a_k)(x,D_x):=a_1(x,D_x)a_2(x,D_x)\cdots a_k(x,D_x).
\end{split}
\end{equation}
Furthermore, for any $l\in \N_0$ there exist constants $C_l>0$
and $l'\in\N_0$ such that
\begin{equation}
|b_k|_l^{(\overline{\bm}^{k})}\leq C_l^{k}\prod_{j=1}^{k}
|a_j|_{l'}^{(m_j)}
\end{equation}
where $C_l$ and $l'$ depend only on $M$ and $l$.
\end{theorem}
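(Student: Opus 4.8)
The plan is to reduce the statement to Theorem~\ref{kcompositionestimate} by realising the $k$-fold composition as the pseudo-differential operator attached to a single multiple symbol, and then to keep careful track of the $k$-dependence of the constants. First I would put
\begin{align}
a(\bx_k,\bxi_k):=\prod_{j=1}^{k}a_j(x^j,\xi^j),\qquad \bm_k:=(m_1,\dots,m_k)\in\R^{k}.
\end{align}
Since each $a_j$ depends only on its own block $(x^j,\xi^j)$, one has $\partial_{\bxi_k}^{\ba_k}D_{\bx_k}^{\bb_k}a=\prod_{j=1}^{k}\partial_{\xi^j}^{\alpha^j}D_{x^j}^{\beta^j}a_j$ with no Leibniz expansion, and the bound $|\partial_{\xi^j}^{\alpha^j}D_{x^j}^{\beta^j}a_j|\lv\jb{\xi^j}^{m_j-\rho_g(|\alpha^j|)}$ for each factor shows at once that $a\in S^{\bm_k,\psi}_{\rho_g}(\R^{2kd})$. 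The same factorisation yields, for all $l_0,l_0'\in\N_0$,
\begin{align}\label{eqProductSeminorm}
|a|_{l_0,l_0'}^{(\bm_k)}\leq\prod_{j=1}^{k}|a_j|_{\max(l_0,l_0')}^{(m_j)},
\end{align}
with no extra combinatorial constant, because in $|a|_{l_0,l_0'}^{(\bm_k)}$ one has $|\alpha^j|\leq l_0$ and $|\beta^j|\leq l_0'$ for every $j$ and the supremum over $(\bx_k,\bxi_k)$ of a product of non-negative functions of disjoint variables factorises.

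Next I would invoke the composition formula recalled at the start of Section~\ref{secMultipleSymbols}: the oscillatory integral \eqref{boscint} built from this particular $a$ is exactly $a_1\#_{\textsc{kn}}\cdots\#_{\textsc{kn}}a_k$. Hence Theorem~\ref{kcompositionestimate} applies directly and produces $b_k:=b\in S^{\overline{\bm}^{k},\psi}_{\rho_g}(\R^{2d})$ (note that $\overline{\bm}^{k}=\sum_{j=1}^{k}m_j$ is the $\summ{m}{k}{k}$ of that theorem) with $b_k(x,D_x)=a_1(x,D_x)\cdots a_k(x,D_x)$, and it gives, for any prescribed $l$, integers $l_0,l_0'$ and a constant $C>0$ with $|b_k|_l^{(\overline{\bm}^{k})}\leq C^{k}|a|_{l_0,l_0'}^{(\bm_k)}$. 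Combined with \eqref{eqProductSeminorm} this reads $|b_k|_l^{(\overline{\bm}^{k})}\leq C^{k}\prod_{j=1}^{k}|a_j|_{l'}^{(m_j)}$ with $l':=\max(l_0,l_0')$.

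The only substantial point is then to check that $C$ and $l'$ can be chosen to depend only on $M$ and $l$ (and on the fixed dimension $d$), not on $k$. To see this I would trace the constant back through the proof of Theorem~\ref{kcompositionestimate} to Theorem~\ref{theoremThetaEstimate}, where it enters only through quantities that are monotone increasing in $\sum_{j=1}^{k-1}|m_j|$ --- the base $2^{\sum_{j=1}^{k-1}|m_j|}$, the constant $C_3^{\sum_{j=1}^{k-1}|m_j|}$, and the derivative order $2\lceil(d+\sum_{j=1}^{k-1}|m_j|)/2+1\rceil$ --- and use the hypothesis $\sum_{j=1}^{k-1}|m_j|\leq\sum_{n=1}^{\infty}|m_n|\leq M$ to replace $\sum_{j=1}^{k-1}|m_j|$ by $M$ throughout. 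The proof of Theorem~\ref{kcompositionestimate} also carries an extra polynomial factor $k^{|\alpha+\beta|}\leq k^{2l}$; since $k^{2l}\leq(e^{2l/e})^{k}$ this is absorbed into the geometric factor by enlarging its base, and renaming that base $C_l$ finishes the argument.

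I expect this last bookkeeping to be the main obstacle: one must be certain that no hidden $k$-dependence survives, in particular that the number $l'$ of derivatives of the $a_j$ that enters the bound and the base of the geometric factor are controlled purely by $M$, $l$ and $d$. This is precisely where summability of $(m_n)$ is indispensable --- without a uniform bound $M$ the quantity $\sum_{j=1}^{k-1}|m_j|$, and with it $l'$ and the base constant, would grow with $k$ and the geometric estimate $|b_k|_l^{(\overline{\bm}^{k})}\leq C_l^{k}\prod_j|a_j|_{l'}^{(m_j)}$ with $k$-independent $C_l,l'$ would break down.
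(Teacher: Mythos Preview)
Your proposal is correct and follows essentially the same route as the paper: form the product multiple symbol $a(\bx_k,\bxi_k)=\prod_{j=1}^k a_j(x^j,\xi^j)\in S^{\bm_k,\psi}_{\rho_g}(\R^{2kd})$ and apply Theorem~\ref{kcompositionestimate}. Your additional bookkeeping --- the seminorm factorisation \eqref{eqProductSeminorm} and the explicit tracing of the $M$-dependence through Theorem~\ref{theoremThetaEstimate} --- fills in details that the paper leaves implicit in its one-line proof.
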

\begin{proof}
This follows from Theorem \ref{kcompositionestimate} by noting that 
$$
b_{k}\in S^{\bm_k,\psi}_{\rho_g}(\R^{2kd})
$$
where $\bm_{k}=(m_1,\ldots,m_{k})$.
\end{proof}
The necessary theory to tackle the main problem of constructing a fundamental solution of \eqref{eqPDE} has now been introduced. In particular, we have extended the necessary statements from \cite{Kumanogo-75} concerning multiple symbols using weight functions to negative definite functions.

\section{Fundamental solution of a pseudo-differential evolution equation}\label{secFundamentalSolution}
In this section the fundamental solution of \eqref{eqPDE} is constructed. The proof of Theorem \ref{theoremFundamentalSolution} is given at the end. As before, let $\pi_{s,t}$ denote for $0\leq s\leq t\leq T$ an arbitrary partition of the interval $[s,t]$ into subintervals, cf. \eqref{eqPartition}. Define for a function $a:[0,T]\times \R^{2d}\rightarrow \C$ that satisfies Assumptions \ref{mainAssumptions}
\begin{align}\label{eqDefinitionP}
p(s,t;x,\xi)=e^{-\int_s^t a(\tau;x,\xi)\id \tau}
\end{align}
and
\begin{align}\label{eqMainComposition}
p(\pi_{s,t})=p(t_0,t_1)\#_{\textsc{kn}}p(t_1,t_2)\#_{\textsc{kn}}\ldots \#_{\textsc{kn}}p(t_k,t_{k+1}).
\end{align}
Obviously, $p(\pi_{s,t})$ can be decomposed into a principal symbol, a term of lower order, and a remainder term. However, it is crucial to understand how these terms depend on the number $k$ of partitions 
of the interval $[s,t]$. Using Fujiwara's method, cf. Appendix \ref{secFujiwara} and \cite{Fujiwara-91}, it is possible to find an expression for the remainder term of $p(\pi_{s,t};x,\xi)$ that can be estimated in terms of $t_{k+1}-t_0$ only, i.e. the length of the interval $[s,t]$.

Denote by $q_0(\pi_{t_0,t_{k+1}})$ the principal term of \eqref{eqMainComposition} and set
\begin{align}\label{eqDefinitionq1}
q_1(\pi_{t_0,t_{k+1}};x,\xi)&=\sum_{\wt{\alpha}_k\in \N_0^{kd},|\wt{\alpha}_k|=1}
D_x^{\alpha^k}p(t_k,t_{k+1};x,\xi)\\
&\mspace{20mu}\times \partial_{\xi}^{\alpha^k}
\Bigg(D_x^{\alpha_{k-1}}p(t_{k-1},t_k;x,\xi)\partial_{\xi}^{\alpha^{k-1}}\bigg(\cdots
D_x^{\alpha^2}p(t_2,t_3;x,\xi)\\
&\mspace{20mu}\times \partial_{\xi}^{\alpha^2}
\Big(D_x^{\alpha^1}p(t_1,t_2;x,\xi)\partial_{\xi}^{\alpha^1}
\big(p(t_0,t_1;x,\xi)\big)\Big)\cdots \bigg)\Bigg)
\end{align}
as well as
\begin{align}\label{eqDefinitionr}
r(\pi_{t_0,t_{k+1}};x,\xi)&=\sum_{\wt{\alpha}_k\in \N_0^{kd},|\ba_k|=2, |\alpha^k|\neq 0}\frac{|\alpha^k|}{\ba_k!}\\
&\msL\times\int_0^1 (1-\theta)^{|\alpha^k|-1}\Os\int_{\R^{2d}}e^{-iy\cdot\eta}
D_x^{\alpha^k}p(t_k,t_{k+1};x+y,\xi)\\
&\msL\times\partial_{\xi}^{\alpha^k}\Bigg(D_x^{\alpha^{k-1}}(t_{k-1},t_k;
x,\xi+\theta\eta)\partial_{\xi}^{\alpha^{k-1}}\bigg(\cdots D_x^{\alpha^2}p(t_2,t_3;x,\xi+\theta \eta)\\
&\msL\times \partial_{\xi}^{\alpha^2}\Big(D_x^{\alpha^1}(t_1,t_2;x,\xi+\theta \eta)
\partial_{\xi}^{\alpha^1}\big(p(t_0,t_1;x,\xi+\theta\eta)\big)\Big)
\cdots \bigg)\Bigg)\id y\id \eta\id \theta.
\end{align}
Also, define,
\begin{align}
(q_0+q_1)(\pi_{t_j,t_{j+1}})=p(t_j,t_{j+1}).
\end{align}
The restriction to derivatives of order upto $2$ in \eqref{eqDefinitionr} follows from the use of the cut-off function $\rho_2$ in Assumptions \ref{mainAssumptions}. Higher-order derivatives would therefore not further improve the decay of $r(\pi_{t_0,t_{k+1}};x,\xi)$. 

First we investigate what happens if we increase in \eqref{eqMainComposition} the number of compositions of symbols. Only the behavior of the principal symbol $q_0$ and the symbol $q_1$ is considered. 
\begin{lemma}\label{lemmaKey}
Let $a:[0,T]\times \R^{2d}\rightarrow \C$ satisfy Assumptions \ref{mainAssumptions}.
Then for any partition $\pi_{t_0,t_{k+1}}$, $k\in\N$, it follows that
\begin{align}\label{keyLemmaRes1}
(q_0+q_1)(\pi_{t_0,t_k})\#_{\textsc{kn}}p(t_k,t_{k+1})=(q_0+q_1+r)(\pi_{t_0,t_{k+1}}).
\end{align}
Furthermore there exist constants $l\in \N_0$, $b_l,c_l,d_l,e_l>0$ such that
\begin{align}\label{keyLemmaRes2}
  &\big|(q_0+q_1)(\pi_{t_0,t_k})\big|_l^{(0)}\lv b_l,
  &\intertext{and}\label{keyLemmaRes3}
  &\big|(q_0+q_1)(\pi_{t_0,t_{k+1}})-p(t_0,t_{k+1})\big|_l^{(2m)}\lv c_l(t_{k+1}-t_0)^2,
 \intertext{as well as} 
\label{keyLemmaRes4} &\big|r(\pi_{t_0,t_{k+1}})\big|_l^{(0)}\lv d_l (t_{k+1}-t_k),\\
\label{keyLemmaRes5} &\big|r(\pi_{t_0,t_{k+1}})\big|_l^{(m)}\lv e_l (t_k-t_0) (t_{k+1}-t_k).
\end{align}
for any partition $\pi_{t_0,t_{k+1}}$ and any $k\in \N$.
\end{lemma}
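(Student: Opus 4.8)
The plan is to run Fujiwara's re-expansion scheme (cf. \cite{Fujiwara-91,KumanogoNaoto-96} and Appendix \ref{secFujiwara}): verify the algebraic identity \eqref{keyLemmaRes1} by applying the Kohn--Nirenberg composition formula once, and then estimate the three explicit symbols $q_0,q_1,r$ separately, keeping every constant uniform in the number $k$ of subdivisions. The preliminary input is a set of pointwise bounds for $p(s,t)$ of \eqref{eqDefinitionP}. Put $w(s,t;x,\xi):=\int_s^t\Re a(\tau;x,\xi)\,\id\tau$, so that $|p(s,t;x,\xi)|=e^{-w(s,t;x,\xi)}$ with $w\gv(t-s)\jb{\xi}^{m'}\geq 0$ by (A2). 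Differentiating $p(s,t)$ by Fa\`a di Bruno and bounding each factor $\int_s^t\partial_\xi^{\gamma_i}D_x^{\delta_i}a\,\id\tau$ by (A3) as $\jb{\xi}^{-\rho_2(|\gamma_i|)}\,w(s,t)$ — using $|a|\lv\jb{\xi}^m$ only on the single factor from which a length is to be extracted, while absorbing the remaining weights via $W^N e^{-W}\lv_N 1$ — one gets the crude bound $|\partial_\xi^\gamma D_x^\delta p(s,t)|\lv_{\gamma,\delta}\jb{\xi}^{-\rho_2(|\gamma|)}$ (uniform in $0\leq s\leq t\leq T$) and the refined bound $|\partial_\xi^\gamma D_x^\delta p(s,t)|\lv_{\gamma,\delta}(t-s)\,\jb{\xi}^{m-\rho_2(|\gamma|)}$ whenever $|\gamma|+|\delta|\geq 1$. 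Thus $p(s,t)\in S^{0,\psi}_{\rho_2}(\R^{2d})$ with uniformly bounded seminorms, and every $x$- or $\xi$-derivative of $p(s,t)$ carries a factor $t-s$; recall also that a $\xi$-derivative sends $S^{\mu,\psi}_{\rho_2}$ into $S^{\mu-1,\psi}_{\rho_1}$ and $S^{\mu,\psi}_{\rho_1}$ into $S^{\mu-2,\psi}_{\rho_0}$.

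For \eqref{keyLemmaRes1} I apply the Kohn--Nirenberg composition formula to $(q_0+q_1)(\pi_{t_0,t_k})\comp p(t_k,t_{k+1})$, Taylor-expanding the first symbol $b_1(x,\xi+\eta)$ in $\eta$: its principal part $q_0(\pi_{t_0,t_k})=p(t_0,t_k)$ (a product of exponentials, hence $e^{-\int_{t_0}^{t_k}a}$) to order two, its $q_1$-part to order one, both with integral remainders. The zeroth-order term $p(t_0,t_k)\,p(t_k,t_{k+1})=p(t_0,t_{k+1})$ (telescoping the time integrals) is exactly $q_0(\pi_{t_0,t_{k+1}})$; the first-order term $\sum_{|\alpha|=1}\partial_\xi^\alpha p(t_0,t_k)\,D_x^\alpha p(t_k,t_{k+1})$ is the $\alpha^k\neq 0$ summand of $q_1(\pi_{t_0,t_{k+1}})$ in \eqref{eqDefinitionq1}, while $q_1(\pi_{t_0,t_k})\,p(t_k,t_{k+1})$ supplies the $\alpha^k=0$ summands, because the nested bracket of \eqref{eqDefinitionq1} for $\pi_{t_0,t_k}$ is literally that inner expression. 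Finally, the second-order remainder of $p(t_0,t_k)\comp p(t_k,t_{k+1})$ is the $|\alpha^k|=2$ part of \eqref{eqDefinitionr} and the first-order remainder of $q_1(\pi_{t_0,t_k})\comp p(t_k,t_{k+1})$ (for which $(1-\theta)^{|\alpha^k|-1}=1$) is the $|\alpha^k|=1$ part; together they are $r(\pi_{t_0,t_{k+1}})$. This proves \eqref{keyLemmaRes1}.

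For \eqref{keyLemmaRes2}, the $q_0$-part is handled by the first paragraph. Collapsing the zero multi-indices in \eqref{eqDefinitionq1} one gets
\[
q_1(\pi_{t_0,t_k};x,\xi)=\sum_{j=1}^{k-1}\ \sum_{|\alpha|=1}p(t_{j+1},t_k)\,D_x^\alpha p(t_j,t_{j+1})\,\partial_\xi^\alpha p(t_0,t_j),
\]
and likewise for $\pi_{t_0,t_{k+1}}$. Applying $\partial_\xi^\gamma D_x^\delta$ and Leibniz, bounding every differentiated factor by the first paragraph but \emph{keeping the exponentials}, the undifferentiated factors contribute $\prod_j e^{-w(t_j,t_{j+1})}=e^{-W}$ with $W:=\sum_j w(t_j,t_{j+1})=w(t_0,t_k)$, and the differentiated ones contribute polynomial weights in the $w(t_j,t_{j+1})$ or in partial sums of them; the $j$-sum is then $\lv\jb{\xi}^{-\rho_2(|\gamma|)}\,W^N e^{-W}\lv_N\jb{\xi}^{-\rho_2(|\gamma|)}$, uniformly in $k$. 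For \eqref{keyLemmaRes3}, note $(q_0+q_1)(\pi_{t_0,t_{k+1}})-p(t_0,t_{k+1})=q_1(\pi_{t_0,t_{k+1}})$; in each $j$-summand extract, via the refined bound, a factor $t_{j+1}-t_j$ from $D_x^\alpha p(t_j,t_{j+1})$ and a factor $t_j-t_0$ from $\partial_\xi^\alpha p(t_0,t_j)$ (both surviving further differentiation), bound the rest crudely, and use $\sum_j(t_{j+1}-t_j)(t_j-t_0)\leq(t_{k+1}-t_0)^2$; the order that appears is $2m-1\leq 2m$.

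For \eqref{keyLemmaRes4} and \eqref{keyLemmaRes5}, summing the $|\alpha^k|=1$ terms in \eqref{eqDefinitionr} over the position of the remaining index-$1$ multi-index (linearity of the oscillatory integral), $r(\pi_{t_0,t_{k+1}})$ becomes a finite sum, over $|\alpha^k|\in\{1,2\}$, of terms $\int_0^1(1-\theta)^{|\alpha^k|-1}b_\theta\,\id\theta$ with
\[
b_\theta(x,\xi)=\Os\int_{\R^{2d}}e^{-iy\cdot\eta}\,D_x^{\alpha^k}p(t_k,t_{k+1};x+y,\xi)\ \partial_\xi^{\alpha^k}\!\big(N(x,\xi+\theta\eta)\big)\,\text{\dj}y\,\text{\dj}\eta,
\]
where $N=p(t_0,t_k)$ if $|\alpha^k|=2$ and $N=q_1(\pi_{t_0,t_k})$ if $|\alpha^k|=1$. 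In either case $a_1:=\partial_\xi^{\alpha^k}N$ is, in effect, two $\xi$-derivatives removed from an order-zero symbol, hence lies in $S^{-2,\psi}_{\rho_0}(\R^{2d})$ with seminorms bounded uniformly in $k$ (by \eqref{keyLemmaRes2} for $N=q_1$, and by the first paragraph for $N=p(t_0,t_k)$), and also in $S^{m-2,\psi}_{\rho_0}(\R^{2d})$ carrying a factor $t_k-t_0$ (argued as for \eqref{keyLemmaRes3}); while $a_2:=D_x^{\alpha^k}p(t_k,t_{k+1})$ lies in $S^{0,\psi}_{\rho_2}(\R^{2d})$, and in $S^{m,\psi}_{\rho_2}(\R^{2d})$ carrying a factor $t_{k+1}-t_k$. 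Applying Corollary \ref{corollaryThetaSeminorm} to $a_1$ and $a_2$ (uniformly in $\theta\in[0,1]$) yields $b_\theta$ of order $\leq m-2\leq 0$ with seminorm $\lv t_{k+1}-t_k$, which is \eqref{keyLemmaRes4}, and of order $\leq 2m-2\leq m$ (using $m\leq 2$) with seminorm $\lv(t_k-t_0)(t_{k+1}-t_k)$, which is \eqref{keyLemmaRes5}; integrating in $\theta$ and summing the finitely many $\alpha^k$ preserves these. \emph{The main difficulty} throughout is this uniformity in $k$: $q_1(\pi_{t_0,t_k})$ and $N$ are sums of $k-1$ products, so a naive estimate grows with $k$; one exploits the nonnegativity of $\Re a$ in (A2), whereby each factor $p(t_j,t_{j+1})$ supplies a damping $e^{-w(t_j,t_{j+1})}$ and each differentiation costs only polynomial weights, so that $W^N e^{-W}\lv_N 1$ absorbs the $k$-fold sum. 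Threading this bookkeeping through every Leibniz expansion while simultaneously tracking the cut-off indices $\rho_g$ is the technical heart; the term matching in \eqref{keyLemmaRes1} is routine but notation-heavy.
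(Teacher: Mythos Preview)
Your proof is correct and follows essentially the same route as the paper: the algebraic identity \eqref{keyLemmaRes1} via one Taylor step in the Kohn--Nirenberg composition, the pointwise bounds on $p(s,t)$ from Fa\`a di Bruno combined with (A3) and $W^N e^{-W}\lv_N 1$, the collapsed form of $q_1$ as $\sum_j p(t_{j+1},t_{k+1})\,D_x^\alpha p(t_j,t_{j+1})\,\partial_\xi^\alpha p(t_0,t_j)$, and finally Corollary~\ref{corollaryThetaSeminorm} applied to $a_1=\partial_\xi^{\alpha^k}N$ and $a_2=D_x^{\alpha^k}p(t_k,t_{k+1})$ for \eqref{keyLemmaRes4}--\eqref{keyLemmaRes5}. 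If anything, you are more explicit than the paper's Appendix~\ref{secEstimates} about the crucial uniformity in $k$: you spell out how the telescoping product of the $e^{-w(t_j,t_{j+1})}$ factors rebuilds $e^{-W}$ and how the $j$-sum of the polynomial weights is absorbed by $W^N e^{-W}$, whereas the paper states the estimate \eqref{keyLemmaEQ4} and refers the reader to a ``straightforward'' extension of the $q_0$ computation.
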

\begin{proof}
In order to see \eqref{keyLemmaRes1}, note that using a Taylor expansion it follows that the symbol $\big((q_0+q_1)(\pi_{t_0,t_k})\#_{\textsc{kn}}p(t_k,t_{k+1})\big)(x,\xi)$ is given by 
\begin{align*}
&\sum_{|\alpha^k|<2} \frac{1}{\alpha^k!} \Os\int_{\R^{2d}}e^{-iy\cdot \eta} D_x^{\alpha^k} p(t_k,t_{k+1};x+y,\xi)\\
&\qquad\times\partial_{\xi}^{\alpha^k}(q_0+q_1)(\pi_{t_0,t_k};x,\xi)\id y\id\eta\\
&+\sum_{|\alpha^k|=2} \frac{|\alpha^k|}{\alpha^k!}\int_0^1 (1-\theta)^{|\alpha^k|-1}\Os \int_{\R^{2d}} e^{-iy\cdot \eta} D_x^{\alpha^k}p(t_k,t_{k+1};x+y,\xi)\\
&\qquad\partial_{\xi}^{\alpha^k}(q_0+q_1)(\pi_{t_0,t_k};x,\xi+\theta\eta)\id y\id \eta\id \theta\\
=\:& (q_0+q_1)(\pi_{t_0,t_{k+1}};x,\xi)+r(\pi_{t_0,t_{k+1}};x,\xi).
\end{align*}
Note that
\begin{align}
q_0(\pi_{t_0,t_k};x,\xi)=e^{-\int_{t_0}^{t_k}a(\tau;x,\xi)}
\end{align}
and hence, cf. Appendix \ref{secEstimates},
\begin{align}\label{keyLemmaEQ3}
\left|\partial_{\xi}^{\alpha}D_x^{\beta} q_0(\pi_{t_0,t_k};x,\xi)\right|\lv_{\alpha,\beta} \begin{cases}
\jb{\xi}^{-\rho_2(|\alpha|)},\\
(t_k-t_0)\jb{\xi}^{m-\rho_2(|\alpha|)} &,|\alpha+\beta|\geq 1.
\end{cases}
\end{align}
It can also be shown that, cf. Appendix \ref{secEstimates},

\begin{align}\label{keyLemmaEQ4}
\left|\partial_{\xi}^{\alpha}D_x^{\beta}q_1(\pi_{t_0,t_{k+1}};x,\xi)\right|\lv_{\alpha,\beta} \begin{cases}
\jb{\xi}^{-1-\rho_1(|\alpha|)},\\
(t_{k+1}-t_0) \jb{\xi}^{m-1-\rho_1(|\alpha|)}.
\end{cases}
\end{align}
 By \eqref{keyLemmaEQ3} and \eqref{keyLemmaEQ4} it follows that \eqref{keyLemmaRes2} holds. Next, note that $q_0(\pi_{t_0,t_{k+1}};x,\xi)-p(t_0,t_{k+1};x,\xi)=0$, and hence
by \eqref{keyLemmaEQ4}, 
\begin{align}
|(q_0+q_1)(\pi_{t_0,t_{k+1}})-p(t_0,t_{k+1})|^{(2m)}_l\lv |q_1(\pi_{t_0,t_{k+1}})|_l^{(2m)}\lv_l (t_{k+1}-t_0)^2,
\end{align}
i.e. \eqref{keyLemmaRes3}. Finally, when rewriting $r(\pi_{t_0,t_{k+2}};x,\xi)$, it follows that
%
\begin{align}
r(\pi_{t_0,t_{k+2}};x,\xi)&=\sum_{|\alpha^{k+1}|=1} \frac{|\alpha^{k+1}|}{\alpha^{k+1}!} \int_0^1 (1-\theta)^{|\alpha^{k+1}|-1}\times\\
& \Os \iint e^{-iy\cdot \eta}\partial_{\xi}^{\alpha^{k+1}} q_1(\pi_{t_0,t_{k+1}};x,\xi+\theta\eta)\times \\
&D_x^{\alpha^{k+1}} q_0(\pi_{t_{k+1},t_{k+2}};x+y,\xi)\mathrm{d}y\mathrm{\eta}\mathrm{d}\theta\\
&+\sum_{|\alpha^{k+1}|=2} \frac{|\alpha^{k+1}|}{\alpha^{k+1}!} \int_0^1 (1-\theta)^{|\alpha^{k+1}|-1}\\
&\times \Os\iint e^{-iy\cdot \eta}\partial_{\xi}^{\alpha^{k+1}}q_0(\pi_{t_0,t_{k+1}};x,\xi+\theta\eta)\\
&\times D_x^{\alpha^{k+1}}q_0(\pi_{t_{k+1},t_{k+2}};x+y,\xi)\mathrm{d}y\mathrm{d}\eta \mathrm{d}\theta.
\end{align} 
By \eqref{keyLemmaEQ3} and \eqref{keyLemmaEQ4} and Corollary \ref{corollaryThetaSeminorm} as well as Lemma \ref{lemmaPeetre}, we get
\begin{align}
|r(\pi_{t_0,t_{k+2}})|_l^{(0)}\lv d_l (t_{k+2}-t_{k+1})
\end{align}
as well as
\begin{align}
|r(\pi_{t_0,t_{k+2}})|_l^{(m)}\lv e_l (t_{k+1}-t_0)(t_{k+2}-t_{k+1}).
\end{align}
\end{proof}
For the motivation of the following Lemma we refer to Appendix \ref{secFujiwara}. 
\begin{lemma}\label{lemmaFujiSkip}
Let $a:[0,T]\rightarrow \C$ satisfy Assumptions \ref{mainAssumptions}. Define for $k\in \N$ the symbol  $R(\pi_{t_0,t_{k+1}})$ by
\begin{align}\label{skipRes1}
p(t_0,t_1)\#_{\textsc{kn}} \ldots \# p(t_k,t_{k+1})
=(q_0+q_1)(\pi_{t_0,t_{k+1}})+R(\pi_{t_0,t_{k+1}}).
\end{align}
Then it follows that
\begin{align}\label{skipRes2}
R(\pi_{t_0,t_{k+1}})=\overset{\prime}{\sum}r(\pi_{t_0,t_{j_1}})\#_{\textsc{kn}}
r(\pi_{t_{j_1},t_{j_2}})\#_{\textsc{kn}}\cdots \#_{\textsc{kn}}r(\pi_{t_{j_{J-1}},t_{j_J}})\#_{\textsc{kn}}(q_0+q_1)(\pi_{t_{j_J},t_{k+1}}),
\end{align}
where $\overset{\prime}{\sum}$ stands for the summation with respect to all 
sequences of integers $(j_1,j_2,\ldots, j_J)$ with the property
\begin{align}
0<j_1-1<j_1<j_2-1<j_2<\cdots<j_{J-1}<j_{J}-1<j_J\leq k+1,
\end{align}
and, in the special case of $j_J=k$+1, we set $(q_0+q_1)(\pi_{t_{j_J},t_{k+1}};x,\xi)=1$.
Furthermore,
\begin{align}\label{skipRes3}
|R(\pi_{t_0,t_{k+1}})|_l^{(2m)}\lv_l (t_0-t_{k+1})^2,
\end{align}
for any partition $\pi_{t_0,t_{k+1}}$ and $k\in \N$.
\end{lemma}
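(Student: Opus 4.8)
The plan is to induct on the number $k+1$ of sub-intervals of $\pi_{t_0,t_{k+1}}$, using as the engine the one-step identity \eqref{keyLemmaRes1} of Lemma \ref{lemmaKey} together with the associativity and bilinearity of $\#_{\textsc{kn}}$ (which, being composition of the associated operators, is furnished by Theorem \ref{multiProduct}). Write $P(\pi_{t_0,t_{k+1}})$ for the left-hand side of \eqref{skipRes1}. Splitting off the last factor gives $P(\pi_{t_0,t_{k+1}})=P(\pi_{t_0,t_k})\#_{\textsc{kn}}p(t_k,t_{k+1})$; inserting the inductive form $P(\pi_{t_0,t_k})=(q_0+q_1)(\pi_{t_0,t_k})+R(\pi_{t_0,t_k})$ and applying \eqref{keyLemmaRes1} to the $(q_0+q_1)$-part yields
\[
R(\pi_{t_0,t_{k+1}})=r(\pi_{t_0,t_{k+1}})+R(\pi_{t_0,t_k})\#_{\textsc{kn}}p(t_k,t_{k+1}).
\]
The base case $k=1$ is immediate from \eqref{keyLemmaRes1}: $R(\pi_{t_0,t_2})=r(\pi_{t_0,t_2})$, which is exactly the only admissible skip term.

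For the inductive step I would feed the skip expansion of $R(\pi_{t_0,t_k})$ supplied by the hypothesis into this recursion and compose each summand with $p(t_k,t_{k+1})=(q_0+q_1)(\pi_{t_k,t_{k+1}})$. Since $\#_{\textsc{kn}}$ only acts on the terminal factor $(q_0+q_1)(\pi_{t_{j_J},t_k})$ of a summand, I apply \eqref{keyLemmaRes1} once more to that factor: when it is non-trivial ($j_J<k$) it becomes $(q_0+q_1)(\pi_{t_{j_J},t_{k+1}})+r(\pi_{t_{j_J},t_{k+1}})$, so the summand splits into an \emph{extended} skip term (same $r$-blocks, terminal block now reaching $t_{k+1}$) and a \emph{lengthened} one (an extra $r$-block $r(\pi_{t_{j_J},t_{k+1}})$ followed by the trivial terminal block with $j_{J+1}=k+1$); when it is already the trivial block ($j_J=k$) one simply gets $(q_0+q_1)(\pi_{t_k,t_{k+1}})$ and the summand acquires terminal index $k$. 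Together with the isolated $r(\pi_{t_0,t_{k+1}})$ (the $J=1$, $j_1=k+1$ term), sorting by the value of the terminal index $j_J\in\{2,\dots,k+1\}$ shows that each admissible sequence $0<j_1-1<\cdots<j_J-1<j_J\leq k+1$ is produced exactly once; this is \eqref{skipRes2}. The only delicate point here is the bookkeeping of the trivial-block conventions at $j_J=k$ and $j_J=k+1$ and of the edge index $j_1\geq 2$ — elementary, but where off-by-one errors lurk.

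For \eqref{skipRes3} I would estimate a generic skip summand factor by factor and then sum. Each $r$-block other than one chosen ``distinguished'' block is estimated in the $|\cdot|^{(0)}$-seminorm via \eqref{keyLemmaRes4}, contributing one small factor, the length of its \emph{last} sub-interval; the distinguished $r$-block (when $J\geq 2$, pick two of them) is estimated instead in the $|\cdot|^{(m)}$-seminorm via \eqref{keyLemmaRes5}, which costs an order $m$ but supplies a \emph{quadratic} time factor $(t_{j-1}-t_0)(t_j-t_{j-1})$, bounded by $(t_{k+1}-t_0)^2$ after pulling out the leading factor; the terminal $(q_0+q_1)$-block is $\lv 1$ by \eqref{keyLemmaRes2}. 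These $J+1$ symbols have orders summing to $2m$, so their $\#_{\textsc{kn}}$-composition lies in $S^{2m,\psi}_{\rho_g}$, and Theorem \ref{multiProduct} — whose constants depend only on the seminorm order and on the uniform bound for the total order, not on $J$ — bounds its $|\cdot|_l^{(2m)}$-seminorm by $C_l^{J+1}$ times the product of the individual seminorm bounds. It remains to sum over admissible sequences: the skip condition $j_i-1>j_{i-1}$ makes the last sub-intervals of distinct blocks pairwise disjoint, so for a fixed number $J$ of blocks the sum over sequences of $\prod_i(\text{last length})$ telescopes with a factorial gain to at most $(t_{k+1}-t_0)^J/J!$, and $\sum_{J\geq 1}C_l^{J+1}(t_{k+1}-t_0)^J/J!\leq C_le^{C_lT}\lv 1$. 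Multiplying by the extracted $(t_{k+1}-t_0)^2$ gives \eqref{skipRes3}.

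The main obstacle is obtaining \eqref{skipRes3} uniformly in $k$ and in the partition. This hinges on two quantitative facts: that a $J$-fold $\#_{\textsc{kn}}$-composition costs only $C_l^{J}$ — which is exactly the $k$-explicit control built up in Section \ref{secMultipleSymbols} and culminating in Theorem \ref{multiProduct} — and that the factorial gain $1/J!$ produced by the disjointness forced by the skip structure overcomes that exponential growth, so that the series converges to a $k$-independent constant, with one spare quadratic time factor extracted from the distinguished $r$-block(s). A secondary but genuinely error-prone point is matching the trivial-block and edge-index conventions in the combinatorial identity \eqref{skipRes2}; sorting both sides by the terminal index, as above, is the safest way to carry this out.
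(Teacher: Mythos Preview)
Your proposal is correct and follows essentially the same route as the paper. For \eqref{skipRes2} you run the same induction via \eqref{keyLemmaRes1} that the paper does (the paper defers the bookkeeping to Appendix \ref{secSkip}, but the mechanism is identical), and for \eqref{skipRes3} you use exactly the same ingredients: \eqref{keyLemmaRes5} on two $r$-blocks, \eqref{keyLemmaRes4} on the rest, \eqref{keyLemmaRes2} on the terminal block, and Theorem \ref{multiProduct} with its $C_l^{J}$ constant to compose. The only cosmetic difference is in the final summation: the paper packages the sum over skip sequences into the product $\prod_{n=0}^k\bigl(1+c(t_{k+1}-t_0)(t_{n+1}-t_n)\bigr)-1$ and then bounds that by $(t_{k+1}-t_0)^2$, whereas you bound the length-$J$ contribution directly by the elementary symmetric polynomial estimate $e_J\le(t_{k+1}-t_0)^J/J!$ and sum the resulting exponential series; these are two ways of writing the same inequality.
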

\begin{proof}
Using induction, it follows by \eqref{keyLemmaRes1} that
\begin{align}\label{eqKeyLemma1}
&p(t_0,t_1)\#_{\textsc{kn}} \ldots \#_{\textsc{kn}} p(t_{k-1},t_k)\#_{\textsc{kn}} p(t_k,t_{k+1})\\
=&\big((q_0+q_1)(\pi_{t_0,t_k})+R(\pi_{t_0,t_k})\big)\#_{\textsc{kn}} p(t_k,t_{k+1})\\
=&(q_0+q_1)(\pi_{t_0,t_{k+1}})+r(\pi_{t_0,t_{k+1}})
+R(\pi_{t_0,t_k})\comp p(t_k,t_{k+1})\\
=&(q_0+q_1)(\pi_{t_0,t_{k+1}})+R(\pi_{t_0,t_{k+1}}).
\end{align}
For details concerning the last equality, cf. Appendix \ref{secSkip}.
%
%
%
%
Therefore, by \eqref{keyLemmaRes2}, \eqref{keyLemmaRes4}, \eqref{keyLemmaRes5} and Theorem \ref{multiProduct}, 
\begin{align}\label{skipEQ1}
&\left|R(\pi_{t_0,t_{k+1}})\right|_l^{(2m)}\\
\lv & C_{l}^J \overset{\prime}{\sum} |r(\pi_{t_0,t_{j_1}})|_{l'}^{(m)} |r(\pi_{t_{j_1},t_{j_2}})|_{l'}^{(m)}|r(\pi_{t_{j_2},t_{j_3}})|_{l'}^{(0)}\cdots |r(\pi_{t_{j_{J-1}},t_{j_J}})|_{l'}^{(0)} |(q_0+q_1)(\pi_{t_{j_J},t_{k+1}})|_{l'}^{(0)}\\
\lv_{l,l'} & \left(\prod_{n=0}^k \big(1+ c (t_{k+1}-t_0)(t_{n+1}-t_n)\big)\right)-1\\
\lv_{l,l'} & (t_{k+1}-t_0)^2,
\end{align}
which is \eqref{skipRes3}. For details, cf. Appendix \ref{secSkip}.
%
%
%
%
\end{proof}
The following Corollary combines the previous estimates and contains a crucial result that is needed to obtain Theorem \ref{theoremFundamentalSolution}. The following notion is used: given a partition $\pi_{s,t}$ of an interval $[s,t]$, we say that $\pi_{s,t}'$ is an arbitrary refinement of $\pi_{s,t}$ if
\begin{align}
\pi_{s,t}'=\begin{cases}
&t_0=t_{0,0}\leq t_{0,1}\leq \ldots \leq t_{0,j_0}=t_1,\\
&t_1=t_{1,0}\leq t_{1,1}\leq \ldots \leq t_{1,j_1}=t_2,\\
&\vdots\\
&t_k=t_{k,0}\leq t_{k,1}\leq \ldots \leq t_{k,j_k}=t_{k+1},
\end{cases}
\end{align}
for $j_k>0$, $j=0,1,2,\ldots, k$.
\begin{corollary}\label{corollaryMain}
Given a function $a:[0,T]:\R^{2d}\rightarrow \C$ that satisfies Assumptions \ref{mainAssumptions} there exists a symbol $p(\pi_{s,t})\in S^{0,\psi}_{\rho_2}(\R^{2d})$, such that
\begin{equation}\label{pPiDefEQ}
\begin{split}
p(\pi_{s,t})=e^{-\int_{s}^{t_1}a(\tau)\id \tau}\#_{\textsc{kn}}e^{-\int_{t_1}^{t_2}a(\tau)\id \tau}\#_{\textsc{kn}}\ldots\#_{\textsc{kn}} e^{-\int_{t_k}^t a(\tau)\id \tau}.
\end{split}
\end{equation}
Furthermore there exist constants $b_l,c_l>0$ and an integer $l$, such that
\begin{align}\label{pPiSemiEQ1}
|p(\pi_{s,t})|_l^{(0)}\lv b_l,
\end{align}
as well as
\begin{align}\label{pPiSemiEQ2}
|p(\pi_{s,t})-p(\pi_{s,t}')|_l^{(2m)}\lv c_l |\pi_{s,t}|,
\end{align}
where $\pi_{s,t}'$ is an arbitrary refinement of $\pi_{s,t}$.
\end{corollary}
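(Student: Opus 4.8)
The plan is to obtain Corollary~\ref{corollaryMain} as a direct packaging of Lemma~\ref{lemmaKey} and Lemma~\ref{lemmaFujiSkip}. First I would record that by definition of $p(s,t;x,\xi)$ in \eqref{eqDefinitionP} and of the Kohn-Nirenberg composition, the iterated composition $e^{-\int_s^{t_1}a(\tau)\id\tau}\comp\cdots\comp e^{-\int_{t_k}^t a(\tau)\id\tau}$ equals $p(t_0,t_1)\comp\cdots\comp p(t_k,t_{k+1})$ with $t_0=s$, $t_{k+1}=t$; by Lemma~\ref{lemmaFujiSkip}, specifically \eqref{skipRes1}, this equals $(q_0+q_1)(\pi_{s,t})+R(\pi_{s,t})$, and hence defines a genuine symbol which I call $p(\pi_{s,t})$. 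That $p(\pi_{s,t})\in S^{0,\psi}_{\rho_2}(\R^{2d})$ follows because $(q_0+q_1)(\pi_{s,t})\in S^{0,\psi}_{\rho_2}$ by the estimates \eqref{keyLemmaEQ3}, \eqref{keyLemmaEQ4} assembled in \eqref{keyLemmaRes2}, while $R(\pi_{s,t})$ lies in the smaller space $S^{2m,\psi}_{\rho_2}\subset S^{0,\psi}_{\rho_2}$ (using $m\le 2$, so $2m\le 4$, but more to the point $R$ has an extra factor $(t-s)^2$ and bounded seminorms) by \eqref{skipRes3}. So \eqref{pPiDefEQ} holds by construction.

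Next, for \eqref{pPiSemiEQ1} I would write $p(\pi_{s,t})=(q_0+q_1)(\pi_{s,t})+R(\pi_{s,t})$ and bound the two pieces separately: the first by \eqref{keyLemmaRes2}, giving $|(q_0+q_1)(\pi_{s,t})|_l^{(0)}\lv b_l$, and the second by \eqref{skipRes3}, which after the continuous embedding $S^{2m,\psi}_{\rho_2}\hookrightarrow S^{0,\psi}_{\rho_2}$ (valid since $m\le 2$ entails $2m-\rho_2(|\alpha|)\le\dots$; more carefully one only needs $2m\le 0$ is false, so instead one uses that $|R|_l^{(2m)}\lv (t-s)^2\le T^2$ is a \emph{bounded} quantity and that on the relevant range of $\xi$ the weight comparison costs only a constant) yields $|R(\pi_{s,t})|_l^{(0)}\lv_l (t-s)^2\lv_{l,T} 1$. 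Adding the two bounds gives \eqref{pPiSemiEQ1} with a new constant $b_l$.

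For the stability estimate \eqref{pPiSemiEQ2} the idea is to apply the same decomposition to both $\pi_{s,t}$ and a refinement $\pi_{s,t}'$ and to compare the pieces across the common subinterval structure. Concretely, a refinement subdivides each $[t_j,t_{j+1}]$ into $[t_{j,0},t_{j,1}],\dots,[t_{j,j_j-1},t_{j,j_j}]$; by the semigroup-type identity for the Kohn-Nirenberg composition together with Lemma~\ref{lemmaKey} applied on each block, the difference $(q_0+q_1)(\pi_{s,t}')-(q_0+q_1)(\pi_{s,t})$ is a sum of terms each carrying a factor of the form $(t_{k+1}-t_0)(t_{j,i+1}-t_{j,i})$ or $(t_{j+1}-t_j)^2$ coming from \eqref{keyLemmaRes3}, and the difference $R(\pi_{s,t}')-R(\pi_{s,t})$ is controlled by \eqref{skipRes3} on subintervals; summing over the mesh produces a factor $\sum_j(t_{j+1}-t_j)|\pi_{s,t}|\le (t-s)|\pi_{s,t}|$, which is $\lv_{l} |\pi_{s,t}|$. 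The measurement is in the coarser seminorm $|\cdot|_l^{(2m)}$ precisely because \eqref{keyLemmaRes3} and \eqref{skipRes3} only give smallness at the level $S^{2m,\psi}_{\rho_2}$, the symbols themselves only being bounded (not small) in $S^{0,\psi}_{\rho_2}$.

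The main obstacle I expect is bookkeeping the combinatorics of the refinement comparison in \eqref{pPiSemiEQ2}: one must expand both $p(\pi_{s,t})$ and $p(\pi_{s,t}')$ via \eqref{skipRes1}-\eqref{skipRes2}, match the skip-sequence sums, and verify that every term in the telescoping difference genuinely carries one extra small factor of the mesh size rather than merely the interval length; the constant-uniformity in $k$ must be tracked through Theorem~\ref{multiProduct} exactly as in the proof of Lemma~\ref{lemmaFujiSkip}. Once that accounting is in place, the estimate follows by the same product-bound trick $\prod(1+c(t_{k+1}-t_0)(t_{n+1}-t_n))-1\lv (t-s)|\pi_{s,t}|$ used in \eqref{skipEQ1}. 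I would relegate the detailed index chase to the appendix (cf.\ Appendix~\ref{secSkip}), as the paper already does for the analogous step in Lemma~\ref{lemmaFujiSkip}.
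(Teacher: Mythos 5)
Your treatment of \eqref{pPiDefEQ} and the general strategy coincide with the paper's: set $p(\pi_{s,t}):=(q_0+q_1)(\pi_{s,t})+R(\pi_{s,t})$ via Lemma \ref{lemmaFujiSkip} and read off the seminorm bounds from Lemma \ref{lemmaKey}. Two points need repair. First, your claim that $R(\pi_{s,t})$ lies in "the smaller space $S^{2m,\psi}_{\rho_2}\subset S^{0,\psi}_{\rho_2}$" has the inclusion backwards when $m>0$ (the generic case here), and your subsequent patch for \eqref{pPiSemiEQ1} --- that "on the relevant range of $\xi$ the weight comparison costs only a constant" --- is not valid, since the discrepancy $\jb{\xi}^{2m}$ is unbounded; boundedness of $|R|_l^{(2m)}$ gives no control on $|R|_l^{(0)}$. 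The correct fix is to rerun the skip-sum estimate \eqref{skipEQ1} using the order-zero bounds \eqref{keyLemmaRes2} and \eqref{keyLemmaRes4} on every factor, which yields $|R(\pi_{s,t})|_l^{(0)}\lv \prod_{n}\big(1+c(t_{n+1}-t_n)\big)-1\lv_T 1$ directly; the gain $(t-s)^2$ is only needed, and only available, at order $2m$.

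Second, for \eqref{pPiSemiEQ2} your plan to expand both $p(\pi_{s,t})$ and $p(\pi_{s,t}')$ via \eqref{skipRes1}--\eqref{skipRes2} and "match the skip-sequence sums" is both harder than necessary and, as a global comparison, would fail: the differences $(q_0+q_1)(\pi_{s,t}')-(q_0+q_1)(\pi_{s,t})$ and $R(\pi_{s,t}')-R(\pi_{s,t})$ are each only $O\big((t-s)^2\big)$ by the triangle inequality, with no mesh-size gain. The paper instead telescopes at the level of the full symbols,
\begin{align}
p(\pi_{s,t})-p(\pi_{s,t}')=\sum_{j=0}^k p(\pi_{t_0,t_j}')\comp\big(p(t_j,t_{j+1})-p(\pi_{t_j,t_{j+1}}')\big)\comp p(\pi_{t_{j+1},t_{k+1}}),
\end{align}
applies the decomposition only inside each block to get $|p(t_j,t_{j+1})-p(\pi_{t_j,t_{j+1}}')|_l^{(2m)}\lv (t_{j+1}-t_j)^2$ from \eqref{keyLemmaRes3} and \eqref{skipRes3}, and then invokes Theorem \ref{multiProduct} with the uniform order-zero bounds on the outer factors, so that $\sum_j(t_{j+1}-t_j)^2\leq |\pi_{s,t}|\,(t-s)$ finishes the proof. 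You do identify the correct local estimate and the correct summation, so the gap is one of assembly rather than of ideas; but as written your argument never isolates the single block on which the two partitions differ, and that isolation is precisely where the factor $|\pi_{s,t}|$ comes from.
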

\begin{proof}
Let us define $p(\pi_{s,t})$ by
\begin{align}\label{pPiSymbolEQ}
p(\pi_{s,t}):=(q_0+q_1)(\pi_{s,t})+R(\pi_{s,t}).
\end{align}
Then \eqref{pPiDefEQ} follows by Lemma \ref{lemmaFujiSkip}. By \eqref{keyLemmaRes2} and \eqref{skipRes3}, 
we get \eqref{pPiSemiEQ1}. Using \eqref{pPiSymbolEQ} it follows
\begin{align}
p(\pi_{t_j,t_{j+1}}')-p(t_j,t_{j+1})=\Big( (q_0+q_1)(\pi_{t_j,t_{j+1}}')-p(t_j,t_{j+1})\Big)+R(\pi_{t_j,t_{j+1}}').
\end{align}
By \eqref{keyLemmaRes3}, we now find that 
\begin{align}\label{pPiEQ1}
&|p(t_j,t_{j+1})-p(\pi_{t_{j},t_{j+1}}')|_l^{(2m)}\\
\lv_l& \big|(q_0+q_1)(\pi_{t_j,t_{j+1}}')-p(t_j,t_{j+1})\big|_l^{(2m)}+|R(\pi_{t_j,t_{j+1}})|_l^{(2m)}\\
\lv_l& (t_{j+1}-t_{j})^2.
\end{align}
As
\begin{align}
p(\pi_{s,t}')=p(\pi_{t_0,t_1}')\#_{\textsc{kn}} p(\pi_{t_1,t_2}')\#_{\textsc{kn}}\ldots\#_{\textsc{kn}} p(\pi_{t_k,t_{k+1}}')
\end{align}
%
we have for some $0\leq j\leq k$,
\begin{align}
&p(\pi_{t_0,t_{j-1}}')\#_{\textsc{kn}} p(\pi_{t_{j-1},t_{k+1}})-p(\pi_{t_0,t_j}')\#_{\textsc{kn}} p(\pi_{t_j,t_{k+1}})\\
=& p(\pi_{t_0,t_{j-1}}')\#_{\textsc{kn}} \big(p(\pi_{t_{j-1},t_j})-p(\pi_{t_{j-1},t_j}')\big)\#_{\textsc{kn}}p(\pi_{t_j,t_{k+1}}).
\end{align}
Hence
\begin{align}
p(\pi_{s,t})-p(\pi_{s,t}')= \sum_{j=0}^k p(\pi_{t_0,t_j}')\#_{\textsc{kn}}\big(p(t_j,t_{j+1})-p(\pi_{t_j,t_{j+1}}')\big)\#_{\textsc{kn}}  p(\pi_{t_{j+1},t_{k+1}}),
\end{align}
and thus by Theorem \ref{multiProduct}, \eqref{pPiEQ1} and \eqref{pPiSemiEQ1}, 
\begin{align}
|p(\pi_{s,t})-p(\pi_{s,t}')|_l^{(2m)}\lv_l & \sum_{j=0}^k |p(\pi_{t_0,t_j}')|_l^{(0)}\cdot |p(t_j,t_{j+1})-p(\pi_{t_j,t_{j+1}}')|_l^{(2m)}\cdot
|p(\pi_{t_{j+1},t_{k+1}})|_l^{(0)}\\
\lv_l & |\pi_{s,t}|\cdot \sum_{j=0}^k (t_{j+1}-t_j) \lv_l |\pi_{s,t}|
\end{align}
This proves \eqref{pPiSemiEQ2}.
\end{proof}
\begin{corollary}\label{corollaryPLim}
Let $a:[0,T]\times \R^{2d}\rightarrow \C$ be a function that satisfies Assumptions \ref{mainAssumptions}. Then there exists a symbol $p^{\mathrm{lim}}(s,t)\in S^{0,\psi}_{\rho_2}(\R^{2d})$ given by 
\begin{align}\label{pstardefinition}
p^{\lim}(s,t;x,\xi)=\lim_{|\pi_{s,t}|\rightarrow 0} \Os \int_{\R^{2kd}}
e^{-i \wt{y}_k\cdot \wt{\eta}_k - \sum_{j=0}^k\int_{t_j}^{t_{j+1}} a(\tau;x+\summ{y}{k}{j},\xi+\eta^{j+1})}\text{\dj} \wt{y}_k\text{\dj}\wt{\eta}_k, \qquad \eta^{k+1}=0,
\end{align}
such that $p(\pi_{s,t};x,\xi)$ converges to $p^{\mathrm{lim}}(s,t;x,\xi)$ in $S^{2m,\psi}_0$ as $|\pi_{s,t}|$ tends to $0$.
\end{corollary}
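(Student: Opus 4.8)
The plan is to deduce everything from Corollary~\ref{corollaryMain} together with the completeness of the symbol space $S^{2m,\psi}_0(\R^{2d})$ and the oscillatory-integral representation of a multiple composition coming from Theorem~\ref{kcompositionestimate}.

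First I would identify $p(\pi_{s,t};x,\xi)$ with the finite-dimensional oscillatory integral appearing under the limit in \eqref{pstardefinition}. Indeed $p(\pi_{s,t})$ is the $(k+1)$-fold composition \eqref{pPiDefEQ} of the symbols $p(t_j,t_{j+1})=e^{-\int_{t_j}^{t_{j+1}}a(\tau)\id\tau}$, each of which lies, uniformly in the partition, in $S^{0,\psi}_{\rho_2}(\R^{2d})$ (this is \eqref{keyLemmaEQ3} with $t_0,t_k$ replaced by $t_j,t_{j+1}$). Applying Theorem~\ref{kcompositionestimate} (equivalently Theorem~\ref{multiProduct}) to the multiple symbol $\prod_{j=0}^{k}p(t_j,t_{j+1})\big(x+\summ{y}{k}{j},\xi+\eta^{j+1}\big)$, with $\summ{y}{k}{0}:=0$ and $\eta^{k+1}:=0$, shows that $p(\pi_{s,t};x,\xi)$ equals exactly the oscillatory integral over $\R^{2kd}$ written in \eqref{pstardefinition}. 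Thus it remains only to prove that this family converges as $|\pi_{s,t}|\to 0$.

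Next I would run a Cauchy argument in $S^{2m,\psi}_0(\R^{2d})$. Given two partitions $\pi_{s,t}$ and $\pi_{s,t}''$ of $[s,t]$, their union $\pi_{s,t}':=\pi_{s,t}\cup\pi_{s,t}''$ is a common refinement with $|\pi_{s,t}'|\leq\min(|\pi_{s,t}|,|\pi_{s,t}''|)$, so two applications of \eqref{pPiSemiEQ2} and the triangle inequality for the seminorm $|\cdot|_l^{(2m)}$ give $|p(\pi_{s,t})-p(\pi_{s,t}'')|_l^{(2m)}\lv c_l\big(|\pi_{s,t}|+|\pi_{s,t}''|\big)$ for every $l$. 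Hence $\{p(\pi_{s,t})\}$, indexed by partitions ordered by refinement, is Cauchy in each defining seminorm of $S^{2m,\psi}_0(\R^{2d})$; since that space is a Fr\'echet space it is complete, and we obtain a limit, which we call $p^{\lim}(s,t)$ and which, by the previous paragraph, is represented by formula \eqref{pstardefinition}. The limit does not depend on the particular sequence of partitions with vanishing mesh, since any two such sequences can be interlaced into one. Finally I would upgrade membership from $S^{2m,\psi}_0$ to $S^{0,\psi}_{\rho_2}$: convergence in $S^{2m,\psi}_0$ forces, for every $\alpha,\beta\in\N_0^d$, uniform convergence on $\R^{2d}$ of $\jb{\xi}^{-2m}\partial_\xi^\alpha D_x^\beta p(\pi_{s,t})(x,\xi)$, hence pointwise convergence $\partial_\xi^\alpha D_x^\beta p(\pi_{s,t})(x,\xi)\to\partial_\xi^\alpha D_x^\beta p^{\lim}(s,t)(x,\xi)$; passing to the limit in the partition-uniform bound contained in \eqref{pPiSemiEQ1} then yields $|\partial_\xi^\alpha D_x^\beta p^{\lim}(s,t)(x,\xi)|\lv b_{|\alpha|+|\beta|}\jb{\xi}^{-\rho_2(|\alpha|)}$, i.e. $p^{\lim}(s,t)\in S^{0,\psi}_{\rho_2}(\R^{2d})$.

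The step I expect to be the main obstacle is the first one: pinning down the precise index shifts relating the multiple composition \eqref{pPiDefEQ} to the oscillatory integral in \eqref{pstardefinition}, i.e. the bookkeeping with $\summ{y}{k}{j}$, with the differences $\eta^j-\eta^{j+1}$, and with the conventions $\summ{y}{k}{0}=0$, $\eta^{k+1}=0$, together with checking that the hypotheses of Theorem~\ref{kcompositionestimate} apply with its $k$ replaced by $k+1$ and its multiple order vector equal to $(0,\dots,0)$. Once this representation is in place, the Cauchy argument and the passage to the limit are routine consequences of Corollary~\ref{corollaryMain}.
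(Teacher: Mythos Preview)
Your proposal is correct and follows essentially the same route as the paper. The paper's proof is extremely terse---it simply cites Corollary~\ref{corollaryMain} and records the two resulting inequalities---while you spell out the Cauchy argument via common refinements, the completeness of the Fr\'echet space, and the pointwise passage to the limit in \eqref{pPiSemiEQ1} to recover the $S^{0,\psi}_{\rho_2}$ membership; these are exactly the steps implicit in the paper's one-line appeal to \eqref{pPiSemiEQ1} and \eqref{pPiSemiEQ2}.
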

\begin{proof}
By Corollary \ref{corollaryMain}, \eqref{pPiSemiEQ1} and \eqref{pPiSemiEQ2}, there exists a symbol $p^{\lim}(s,t)\in S^{0,\psi}_{\rho_2}(\R^{2d})$ such that
for some constant $c_l>0$
\begin{align}
|p^{\lim}(s,t)|_l^{(0)}\leq c_l,
\end{align}
and
\begin{align}
|p(\pi_{s,t})-p^{\lim}(s,t)|_l^{(2m)}\lv c_l |\pi_{s,t}|.
\end{align}
\end{proof}
\begin{corollary}\label{corollaryFeynmanExists}
For a function $u\in L^2(\R^d)$ the \psd operator $p^{\lim}(s,t;x,D_x)$ satisfies
\begin{align}\label{eqPLimDefinition}
&p^{\lim}(s,t;x,D_x)u(x)\\
=\:&\lim_{|\pi_{s,t}|\rightarrow 0} e^{-\int_s^{t_1}a(\tau)\id \tau}(x,D_x)e^{-\int_{t_1}^{t_2}a(\tau)\id\tau}(x,D_x)\circ \ldots \circ e^{-\int_{t_k}^ta(\tau)\id\tau}(x,D_x)u(x)\\
=\:&\lim_{|\pi_{s,t}|\rightarrow 0} \Os\int_{\R^{2(k+1)d}}\mspace{-10mu}
e^{\sum_{j=0}^k i(x^{j+1}-x^{j})\cdot \xi^{j+1}-\int_{t_j}^{t_{j+1}}a(\tau;x^j,\xi^{j+1})\id \tau} u(x^{k+1})\text{\dj} \wt{x}_{k+1}\text{\dj} \wt{\xi}_{k+1},
\end{align}
in $L^2(\R^d)$ where $x^0\equiv x$.
\end{corollary}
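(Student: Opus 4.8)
The plan is to separate the two claimed equalities and reduce each to facts already available. For a \emph{fixed} partition $\pi_{s,t}$ nothing has to be passed to a limit: Corollary~\ref{corollaryMain} asserts that $p(\pi_{s,t})$ is exactly the Kohn--Nirenberg symbol of the composition $e^{-\int_s^{t_1}a(\tau)\id\tau}(x,D_x)\circ\cdots\circ e^{-\int_{t_k}^{t}a(\tau)\id\tau}(x,D_x)$, and since each factor $p(t_j,t_{j+1})=e^{-\int_{t_j}^{t_{j+1}}a(\tau)\id\tau}$ lies in $S^{0,\psi}_{\rho_2}(\R^{2d})$ by \eqref{keyLemmaEQ3}, hence is $L^2$-continuous, this operator identity --- valid a priori on $\mathcal{S}(\R^d)$ --- extends to all $u\in L^2(\R^d)$. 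To obtain the oscillatory-integral form I would then substitute into the composition the kernel representation $e^{-\int_{t_j}^{t_{j+1}}a(\tau)\id\tau}(x,D_x)v(x)=\Os\int_{\R^{2d}}e^{i(x-y)\cdot\eta}e^{-\int_{t_j}^{t_{j+1}}a(\tau;x,\eta)\id\tau}v(y)\,\text{\dj} y\,\text{\dj}\eta$ of each of the $k+1$ factors and collapse the iterated integrals into a single integral over $\R^{2(k+1)d}$ by the Fubini theorem for oscillatory integrals (cf.~\cite{Kumanogo-75}), after an obvious relabelling of variables with $x^0\equiv x$; this is precisely the passage from \eqref{eqIntroduction1} to \eqref{eqIntroduction2}. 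After this first stage, both right-hand sides of the Corollary are equal --- for each $\pi_{s,t}$ --- to $p(\pi_{s,t};x,D_x)u$, so the statement reduces to the single claim that $p(\pi_{s,t};x,D_x)u\to p^{\lim}(s,t;x,D_x)u$ in $L^2(\R^d)$ as $|\pi_{s,t}|\to 0$.

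For this second stage I would combine Corollary~\ref{corollaryPLim}, which gives $p(\pi_{s,t})\to p^{\lim}(s,t)$ in $S^{2m,\psi}_0(\R^{2d})$, with \eqref{pPiSemiEQ1} and the analogous bound for $p^{\lim}(s,t)$, which together show that $d_\pi:=p(\pi_{s,t})-p^{\lim}(s,t)$ stays bounded in $S^{0,\psi}_{\rho_2}(\R^{2d})$ uniformly in $\pi_{s,t}$. It then suffices to prove $\|d_\pi(x,D_x)u\|_{L^2}\to 0$, and I would do this by a frequency splitting: fix $\varphi_R\in C_c^\infty(\R^d)$ with $\varphi_R\equiv 1$ on $\{|\xi|\le R\}$ and $\operatorname{supp}\varphi_R\subset\{|\xi|\le 2R\}$, and write $d_\pi(x,D_x)u=(\varphi_R d_\pi)(x,D_x)u+d_\pi(x,D_x)\bigl((1-\varphi_R(D))u\bigr)$. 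On the $\xi$-support of $\varphi_R d_\pi$ the weight $\jb{\xi}$ is bounded by a constant $c_R$, so every order-zero seminorm of $\varphi_R d_\pi$ is controlled by $C_R\,|d_\pi|_l^{(2m)}$; a Calder\'on--Vaillancourt type $L^2$-estimate for symbols of class $S^{0,\psi}_{\rho_g}(\R^{2d})$ (cf.~\cite{Hoh-98}) then gives $\|(\varphi_R d_\pi)(x,D_x)u\|_{L^2}\le C_R\,|d_\pi|_l^{(2m)}\,\|u\|_{L^2}$, which tends to $0$ as $|\pi_{s,t}|\to 0$. For the remaining term the uniform bound on $d_\pi$ in $S^{0,\psi}_{\rho_2}(\R^{2d})$ yields $\|d_\pi(x,D_x)\|_{L^2\to L^2}\le C$ uniformly in $\pi_{s,t}$, so that $\|d_\pi(x,D_x)((1-\varphi_R(D))u)\|_{L^2}\le C\,\|(1-\varphi_R(D))u\|_{L^2}\to 0$ as $R\to\infty$, since $u\in L^2(\R^d)$. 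Choosing first $R$ large and then $|\pi_{s,t}|$ small completes the argument.

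I expect the second stage to be the main obstacle. The symbol convergence supplied by Corollary~\ref{corollaryPLim} lives in $S^{2m,\psi}_0(\R^{2d})$, which for $m>0$ strictly contains $S^{0,\psi}_0(\R^{2d})$ and so does not by itself control the $L^2$-operator norm; what rescues the argument is that the small $S^{2m,\psi}_0$-seminorms still force smallness on \emph{compact} frequency sets, while the \emph{uniform} order-zero bound absorbs the high frequencies once $u$ is genuinely in $L^2$ --- the frequency splitting is the device that fuses these two. A minor technical point is the justification of interchanging the iterated oscillatory integrals in the first stage, which is routine in Kumano-go's framework. If one prefers to avoid citing an $L^2$-continuity theorem for $S^{0,\psi}_{\rho_g}$ altogether, the same two-stage scheme can be run first for $u\in\mathcal{S}(\R^d)$, where the high-frequency tail is $L^2$-small directly, and then extended to $L^2(\R^d)$ by the uniform bound together with density of $\mathcal{S}(\R^d)$ in $L^2(\R^d)$.
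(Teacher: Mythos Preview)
Your proof is correct, and the first stage (identifying the composed operator with the oscillatory integral for a fixed partition) matches the paper's treatment; the second stage, however, is carried out differently. The paper's own argument for the $L^2$-convergence is a one-line Sobolev mapping estimate:
\[
\|(p(\pi_{s,t})-p^{\lim}(s,t))(x,D_x)u\|_{L^2}\;\lesssim\;|p(\pi_{s,t})-p^{\lim}(s,t)|_l^{(2m)}\,\|u\|_{H^{2m,\psi}}\;\lesssim\;|\pi_{s,t}|\,\|u\|_{H^{2m,\psi}},
\]
i.e.\ the difference is viewed as an operator of order $2m$ mapping $H^{2m,\psi}\to L^2$, with the rate read off directly from Corollary~\ref{corollaryPLim}; the passage from $u\in H^{2m,\psi}$ to general $u\in L^2$ is then implicit via the uniform order-zero bound \eqref{pPiSemiEQ1} and density. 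Your frequency-splitting argument is a genuine alternative that avoids the $\psi$-Sobolev scale entirely and works directly in $L^2$, trading the explicit rate $O(|\pi_{s,t}|)$ (available only for smooth $u$) for a more self-contained proof. Both routes rest on the same two inputs --- symbol convergence in $S^{2m,\psi}_0$ together with uniform boundedness in $S^{0,\psi}_{\rho_2}$ --- but the paper packages them through a Sobolev estimate while you package them through a low/high frequency decomposition; each is legitimate, and your version has the merit of making the extension to $L^2$ explicit rather than tacit.
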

\begin{proof}
This follows from Corollary \ref{corollaryPLim} and
\begin{align}
&\|p(\pi_{s,t};x,D_x)-p^{\lim}(s,t;x,D_x)\|_{L^2}\lv |p(\pi_{s,t})-p^{\lim}(s,t)|^{(2m)}_l \|u\|_{H^{2m,\psi}}\\
\lv\:& |\pi_{s,t}| \|u\|_{H^{2m,\psi}}.
\end{align}
\end{proof}
All the necessary tools and results for the proof of Theorem \ref{theoremFundamentalSolution} have been presented. It remains to make a few straightforward observations.

{\noindent \bf Proof of Theorem \ref{theoremFundamentalSolution}.} It remains to prove that $p^{\lim}(s,t;x,D_x)$ is the fundamental solution of \eqref{eqPDE}. Noting that $t_{k+1}=t$, a simple differentiation yields that
\begin{align}
\partial_t p^{\lim}(s,t)=-a(t)\#_{\textsc{kn}} p^{\lim}(s,t).
\end{align}
It is obvious that $p^{\lim}(s,s)=1$. 
\\
\qed
\\[0.8cm]
Concerning the validity of Corollary \ref{corollaryRoughSymbols} the following remarks can be made: the results so far have relied on the existence of semi-norms $|\cdot|_l^{(m)}$ of symbols for some $m\in \R$ and $l\in \N_0$. The index $l$ is usually chosen in such a way that the corresponding oscillatory integrals exist. Therefore, the constant $l$ depends on the dimension $d$ as well as on the order $m$ of the symbol. By choosing symbols that are $100d$-times differentiable we admit symbols with sufficient differentiability to work through all the proofs. From a probabilistic point of view, it would be desirable to work with symbols whose order of differentiability is independent of the dimension $d$, in particular with respect to the variable $\xi\in\R^d$. However, with the techniques and symbol classes used here, this is not possible. The differentiability conditions with respect to the variable $x\in\R^d$ of the symbol could be significantly improved by adapting techniques from \cite{Taylor-91}.

\section{Markov processes}\label{secMarkovProcesses}
In this section Markov processes are constructed using the fundamental solution 
\begin{align}
p^{\lim}(s,t;x,D_x)
\end{align}
 of \eqref{eqPDE} from Theorem \ref{theoremFundamentalSolution}. Markov processes constructed in this manner are typically time- and space-inhomogeneous. The aim is to prove Theorem \ref{theoremTransitionFunction}. This is done by checking that the conditions of a Markov evolution family are satisfied, cf. Definition \ref{definitionMarkovEvolutionFamily}. Once these properties are verified, the Markov process is constructed using standard theory.

{\noindent \bf Proof of Theorem \ref{theoremTransitionFunction}.} For the given symbol $a$ the translated symbol $a+R$, $R>0$ large enough, satisfies Assumptions \ref{mainAssumptions}. Therefore, by Corollary \ref{corollaryFeynmanExists}, the operator
\begin{align}
&p^{\lim}_R(s,t;x,D_x)u(x)\\
=\:&\lim_{|\pi_{s,t}|\rightarrow 0} e^{-\int_s^{t_1}\big(a(\tau)+R\big)\id \tau}(x,D_x)e^{-\int_{t_1}^{t_2}\big(a(\tau)+R\big)\id\tau}(x,D_x)\circ \ldots \circ e^{-\int_{t_k}^t\big(a(\tau)+R\big)\id\tau}(x,D_x)u(x)\\
=\:&e^{-R(t-s)}\lim_{|\pi_{s,t}|\rightarrow 0}e^{-\int_s^{t_1}a(\tau)\id \tau}(x,D_x)e^{-\int_{t_1}^{t_2}a(\tau)\id\tau}(x,D_x)\circ \ldots \circ e^{-\int_{t_k}^ta(\tau)\id\tau}(x,D_x)u(x)\\
=\:&e^{-R(t-s)}p^{\lim}(s,t;x,D_x)u(x)
\end{align}
is well-defined. Our first aim is show that the family of operators $p^{\lim}(s,t;x,D_x)$ satisfies the conditions set out in Definition \ref{definitionMarkovEvolutionFamily} on $C_{\infty}(\R^d)$. 

It is obvious that the operator $p^{\lim}(s,t;x,D_x)$ is a linear operator and that $p^{\lim}(s,s;x,D_x)=\Ident$. Moreover,
\begin{align}
p^{\lim}(s,r)\#_{\textsc{kn}}p^{\lim}(r,t)=p^{\lim}(s,t).
\end{align}
It remains to check the following three properties:
\begin{align}
&f\geq 0 \Rightarrow p^{\lim}(s,t;x,D_x)f\geq 0\:\text{ for all }\:0\leq s\leq t<\infty, f\in C_{\infty}(\R^d),\\
&p^{\lim}(s,t;x,D_x)\text{ is a contraction on $C_{\infty}(\R^d)$, i.e. }\|p^{\lim}(s,t;x,D_x)\|\leq 1 \:\text{ for }\:0\leq s\leq t<\infty,\\
&p^{\lim}(s,t;x,D_x)1=1\:\text{ for all }\:t\geq 0.
\end{align}
By \eqref{eqPLimDefinition} it suffices to show these properties for
\begin{align}
e^{-\int_{t_{j-1}}^{t_j}a(\tau)\id \tau}(x,D_x)
\end{align}
where $j\in \{1,\ldots,k+1\}$. Let $\pi_{t_{j-1},t_j}=\{t_{j-1}=t_{j-1,0}<t_{j-1,1}<\ldots<t_{j-1,j_1}=t_j\}$, $j_1>0$, be a partition of the interval $[t_{j-1},t_j]$. Also, define
\begin{align}
p_{x_0}(t_{j-1},t_j;\xi)=e^{-\int_{t_{j-1}}^{t_j}a(\tau;x_0,\xi)\id \tau}
\end{align}
as well as
\begin{align}
p_{t_{j-1,j-1},x_0}(t_{j-1,j-1},t_{j,j-1};\xi)=e^{-(t_{j-1,j}-t_{j-1,j-1})a(t_{j-1,j-1};x_0,\xi)}.
\end{align}
For every fixed $t_{j-1,j-1}\in \pi_{t_{j-1},t_j}$, $x_0\in \R^d$, $\xi\mapsto a(t_{j-1};x_0,\xi)$ is a continuous negative definite function with associated convolution semigroup
$(\mu_r^{t_{j-1,j-1},x_0})_{r\geq 0}$ such that
\begin{align}
p_{t_{j-1,j-1},x_0}(t_{j-1,j-1},t_{j-1,j};D_x)u(x)=\int_{\R^d}u(x-y)\mu_{t_{j-1,j}-t_{j-1,j-1}}^{t_{j-1,j-1},x_0}(\id y).
\end{align}
It is obvious that $p_{t_{j-1,j-1},x_0}(t_{j-1,j-1},t_{j-1,j};D_x)$ is positivity preserving and that
\begin{align}
\|p_{t_{j-1,j-1},x_0}(t_{j-1,j-1},t_{j-1,j};D_x)u\|_{\infty}\leq \|u\|_{\infty}.
\end{align}
Therefore the operator
\begin{align}
\prod_{j=1}^{j_1}p_{t_{j-1,j-1},x_0}(t_{j-1,j-1},t_{j-1,j};D_x)
\end{align}
is also positivity preserving as well as a contraction on $C_{\infty}(\R^d)$. Note that
\begin{align}
p_{x_0}(t_{j-1},t_j;\xi)=\lim_{|\pi_{t_{j-1},t_j}|\rightarrow 0}\prod_{j=1}^{j_1}p_{t_{j-1,j-1},x_0}(t_{j-1,j-1},t_{j,j-1};\xi)
\end{align}
and it follows that $p_{x_0}(t_{j-1},t_j;D_x)$ is positivity preserving and
\begin{align}
\|p_{x_0}(t_{j-1},t_j;D_x)u\|_{\infty}\leq \|u\|_{\infty}.
\end{align}
In conclusion
\begin{align}
\begin{split}
\left\|e^{-\int_{t_{j-1}}^{t_j}a(\tau)\id \tau}(x,D_x)u\right\|_{\infty}=&\sup_{x\in\R^d}\left|e^{-\int_{t_{j-1}}^{t_j}a(\tau)\id \tau}(x,D_x)u(x)\right|\\
=&\sup_{x_0\in \R^d}\|p_{x_0}(t_{j-1},t_j;D_x)u\|_{\infty}\\
\leq& \|u\|_{\infty}.
\end{split}
\end{align}
Next let $u(x_0)\geq 0$ for $x_0\in \R^d$. Then
\begin{align}
\left[e^{-\int_{t_{j-1}}^{t_j}a(\tau)\id \tau}(x,D_x)u\right](x_0)&=\int_{\R^d}e^{ix_0\cdot \xi}e^{-\int_{t_{j-1}}^{t_j}a(\tau;x_0,\xi)\id \tau}\wh{u}(\xi)\id \xi\\
&=p_{x_0}(t_{j-1},t_j;D_x)u(x_0)\geq 0.
\end{align}
In order to prove that $e^{-\int_{t_{j-1}}^{t_j}a(\tau)\id \tau}(x,D_x)1(x)=1(x)$, simply note that
\begin{align}
e^{-\int_{t_{j-1}}^{t_j}a(\tau)\id \tau}(x,D_x)1(x)=e^{ix\cdot 0}\left[e^{-\int_{t_{j-1}}^{t_j}a(\tau)\id \tau}(x,D_x)e^{-i(\cdot,0)}\right](x)=e^{-\int_{t_{j-1}}^{t_j}a(\tau;x,0)\id \tau}=1(x).
\end{align}
For $t\geq 0$ and $x\in\R^d$ fixed, the mapping 
\begin{align}
u\mapsto p^{\lim}(s,t;x,D_x)u
\end{align} 
is by the above a linear, continuous and positive operator on $C_{\infty}(\R^d)$. By a variant of Riesz' representation theorem it follows that there exists a unique Borel measure
$p_{s,t}(x,\id y)$ on $\mathcal{B}^d(\R^d)$ such that for $u\in C_{\infty}(\R^d)$
\begin{align}\label{eqPLimRepresentationMeasure}
p^{\lim}(s,t;x,D_x)u(x)=\int_{\R^d}u(y)p_{s,t}(x,\id y).
\end{align}
Therefore $p_{s,t}(x,\id y)$ is indeed a probability measure and the operator $p^{\lim}(s,t;x,D_x)$ can be extended to $B_b(\R^d)$ by defining an operator
\begin{align}
\wt{p}_{\lim}(s,t;x,D_x)u(x)=\int_{\R^d}u(y)p_{s,t}(x,\id y)
\end{align}
for all $u\in B_b(\R^d)$. Using \eqref{eqPLimRepresentationMeasure} it is clear that the operator $\wt{p}_{\lim}(s,t;x,D_x)$ satisfies all the properties of a Markov evolution family. The construction of a Markov process from a Markov evolution family of operators is standard, cf. \cite{Dynkin-65} for details.
\\
\qed

\section{Appendix}
\subsection{Fujiwara's representation of the remainder term}\label{secFujiwara}
The idea behind equations \eqref{skipRes1} and \eqref{skipRes2} is based on Fujiwara's representation \cite{Fujiwara-91} of the remainder term of a multiple composition of pseudo-differential operators. Let $a_1,\ldots,a_k \in S^{\wt{m}_k,\psi}_{\rho_g}$, $g\in \{0,1,2\}$, for some $k\in \N$. Then
\begin{align}\label{eqApp1eq1}
\begin{split}
&(a_1\comp \ldots \comp a_k)(x,\xi)\\
=&\Os\int_{\R^{2(k-1)d}} e^{i\wt{y}_{k-1}\cdot \wt{\eta}_{k-1}} 
a(x,\xi,y^1,\eta^1,\ldots,y^{k-1},\eta^{k-1})
\text{\dj} \wt{y}_{k-1}\text{\dj} \wt{\eta}_{k-1}
\end{split}
\end{align}
where
\begin{align}
&a(x,\xi,y^1,\eta^1,\ldots,y^{k-1},\eta^{k-1})\\
=&a_1(x,\xi+\eta^1)a_2(x+y^1,\xi+\eta^2)\cdots a_{k-1}(x+\summ{y}{k-1}{k-2},\xi+\eta^{k-1})a_k(x+\summ{y}{k-1}{k-1},\xi).
\end{align}
Integrate \eqref{eqApp1eq1} with respect to $(y^1,\eta^1)$ and find that
\begin{align}
&\Os\int_{\R^{2d}}e^{-i y^1\cdot \eta^1}a(x,\xi,y^1,\eta^1,\ldots,y^{k-1},\eta^{k-1})\text{\dj} y^1 \text{\dj} \eta^1\\
=&S_1a(x,\xi,y^2,\eta^2,\ldots,y^{k-1},\eta^{k-1})+R_1a(x,\xi,y^2,\eta^2,\ldots, y^{k-1},\eta^{k-1})
\end{align}
where $S_1a$ denotes the main part and $R_1a$ denotes the remainder:
\begin{align*}
&S_1a(x,\xi,y^2,\eta^2,\ldots,y^{k-1},\eta^{k-1})\\
=&\bigg[\sum_{|\alpha^1|<N}\frac{1}{\alpha^1!}D_x^{\alpha^1}a_2(x,\xi+\eta^2)\partial_{\xi}^{\alpha^1}a_1(x,\xi)\bigg]a_3(x+y^2,\xi+\eta^3)\cdots\\
&\times a_{k-1}(x+y^2+\ldots+y^{k-2},\xi+\eta^{k-1})a_k(x+y^2+\ldots+y^{k-1},\xi),
\end{align*}
and
\begin{align*}
&R_1a(x,\xi,y^2,\eta^2,\ldots,y^{k-1},\eta^{k-1})\\
=&\bigg[\sum_{|\alpha^1|=N}\frac{|\alpha^1|}{\alpha^1!} \int_0^1 (1-\theta)^{|\alpha^1|-1}\bigg(\Os\int_{\R^{2d}}e^{-iy^1\cdot \eta^1}D_x^{\alpha^1}a_2(x+y^1,\xi+\eta^2)\\
&\times \partial_{\xi}^{\alpha^1}a_1(x,\xi+\theta\eta^1)\text{\dj} y^1\text{\dj} \eta^1\bigg)\mathrm{d}\theta\bigg]a_3(x+y^2,\xi+\eta^3)\cdots\\
&\times a_{k-1}(x+y^2+\ldots+y^{k-2},\xi+\eta^{k-1})a_k(x+y^2+\ldots+y^{k-1},\xi).
\end{align*}
 Integrate $S_1a$ over $(y^2,\eta^2)$ and obtain
\begin{align}
&\Os\int_{\R^{2d}}e^{-iy^2\cdot \eta^2}S_1a(x,\xi,y^2,\eta^2,\ldots,y^{k-1},\eta^{k-1})\text{\dj} y^2\text{\dj} \eta^2\\
=&S_2S_1a(x,\xi,y^3,\eta^3,\ldots,y^{k-1},\eta^{k-1})+R_2S_1a(x,\xi,y^3,\eta^3,\ldots,y^{k-1},\eta^{k-1}).
\end{align}
Repeating this procedure we finally integrate over $(y^{k-1},\eta^{k-1})$ and find
\begin{align}
&\Os\int_{\R^{2d}} e^{-iy^{k-1}\cdot \eta^{k-1}} S_{k-2}\cdots S_1a(x,\xi,y^{k-1},\eta^{k-1})\text{\dj} y^{k-1}\text{\dj} \eta^{k-1}\\
=&S_{k-1}\cdots S_1a(x,\xi)+R_{k-1}S_{k-2}\cdots S_1a(x,\xi).
\end{align}
Next, consider the remainder terms. Integration of $R_1a$ with respect to $(y^2,\eta^2)$ is complicated, 
hence this is skipped and $R_1a$ is integrated with respect to $(y^3,\eta^3)$ beforehand:
\begin{align}
&\Os\int_{R^{2d}} e^{-iy^3\cdot \eta^3} R_1a(x,\xi,y^2,\eta^2,\ldots,y^{k-1},\eta^{k-1})\text{\dj} y^3\text{\dj} \eta^3\\
=& S_3R_1a(x,\xi,y^2,\eta^2,y^4,\eta^4,\ldots,y^{k-1}\eta^{k-1})+R_3R_1a(x,\xi,y^2,\eta^2,y^4,\eta^4,\ldots, y^{k-1},\eta^{k-1}).
\end{align}
Skip integration of $R_2S_1a$ with respect to $(y^3,\eta^3)$ and integrate with respect to $(y^4,\eta^4)$ first:
\begin{align}
&\Os\int_{\R^{2d}}e^{-iy^4\cdot \eta^4}R_2S_1a(x,\xi,y^3,\eta^3,\ldots,y^{k-1},\eta^{k-1})\text{\dj} y^4\text{\dj} \eta^4\\
=&S_4R_2S_1a(x,\xi,y^3,\eta^3,y^5,\eta^5,\ldots,y^{k-1},\eta^{k-1})+R_4R_2S_1a(x,\xi,y^3,\eta^3,y^5,\eta^5,\ldots,y^{k-1},\eta^{k-1}).
\end{align}
In the same manner, we integrate $S_3R_1a$ with respect to $(y^4,\eta^4)$ but do not integrate $R_3R_1a$. Repeating this procedure, we arrive at
\begin{align}\label{eqApp1eq2}
(a_1\comp \ldots \comp a_k)(x,\xi)= S_{k-1}\cdots S_1 a(x,\xi)+\overset{\prime}{\sum}a_{j_1,\ldots,j_{J}}(x,\xi)
\end{align}
where $\overset{\prime}{\sum}$ stands for the summation with respect to sequences $(j_1,\ldots j_J)$ of integers with the property
\begin{align}
0<j_1-1<j_1<j_2-1<j_2<\ldots<j_J-1<j_J\leq k-1<j_{J+1}=k
\end{align}
and
\begin{align}
a_{j_1,\ldots j_J}(x,\xi)=\Os\int_{R^{2Jd}} e^{-i\sum_{j\in\{j_1,\ldots j_J\}} y^j\cdot \eta^j}b_{j_1,\ldots,j_J}(x,\xi,y^{j_1},\eta^{j_1},\ldots y^{j_J},\eta^{j_J})\text{\dj} y^{j_1}\eta^{j_1}\ldots y^{j_J}\eta^{j_J}.
\end{align}
Here,
\begin{align}
b_{j_1,\ldots,j_J}(x,\xi,y^{j_1},\eta^{j_1},\ldots y^{j_J},\eta^{j_J})=(Q_{k-1}Q_{k-2}\cdots Q_1a)(x,\xi,y^{j_1},\eta^{j_1},\ldots y^{j_J},\eta^{j_J})
\end{align}
where
\begin{align}
Q_j=\begin{cases}
R_j, &\text{ if }j=j_1-1,j_2-1,\ldots, j_J-1,\\
\Ident, &\text{ if }j=j_1,\ldots,j_J,\\
S_j, &\text{ otherwise.}
\end{cases}
\end{align}

Now apply these ideas in the situation of \eqref{skipRes1}. Consider for ease of notation $k=4$, i.e.
 \begin{align}
&a(\pi_{t_0,t_5};x,\xi,y^1,\eta^1,y^2,\eta^2,y^3,\eta^3)\\
=&p(t_0,t_1;x,\xi+\eta^1)p(t_1,t_2;x+\summ{y}{3}{1},\eta^2)p(t_2,t_3;x+\summ{y}{3}{2},\xi+\eta^3)p(t_3,t_4;x+\summ{y}{3}{3},\xi).
\end{align}
By \eqref{eqApp1eq2} the following terms need to be calculated:
\begin{align*}
&S_3S_2S_1a=(q_0+q_1)(\pi_{t_0,t_4}),\\
&\int_{\R^{2d}}e^{-iy^2\cdot \eta^2}S_3R_1a(y^2,\eta^2)\text{\dj} y^2\text{\dj} \eta^2=r(\pi_{t_0,t_2})\comp (q_0+q_1)(\pi_{t_2,t_4}),\\
&\int_{\R^{2d}}e^{-iy^3\cdot \eta^3}R_2S_1a(y^3,\eta^3)\text{\dj} y^3\text{\dj} \eta^3=r(\pi_{t_0,t_3})\comp (q_0+q_1)(\pi_{t_3,t_4}),\\
&R_3S_2S_1a=r(\pi_{t_0,t_4}),\\
&\int_{\R^{2d}}e^{-iy^2\cdot\eta^2}R_3R_1a(y^2,\eta^2)\text{\dj} y^2\text{\dj} \eta^2=r(\pi_{t_0,t_2})\comp r(\pi_{t_2,t_4}).
\end{align*}
Therefore,
\begin{align}
&p(t_0,t_1)\comp p(t_1,t_2)\comp p(t_2,t_3)\comp p(t_3,t_4)\\
=&(q_0+q_1)(\pi_{t_0,t_4})+r(\pi_{t_0,t_4})+r(\pi_{t_0,t_3})\comp (q_0+q_1)(\pi_{t_3,t_4})+r(\pi_{t_0,t_2})\comp (q_0+q_1)(\pi_{t_2,t_4})\\
&+r(\pi_{t_0,t_2})\comp r(\pi_{t_2,t_4})\\
=&(q_0+q_1)(\pi_{t_0,t_4})+R(\pi_{t_0,t_4}).
\end{align}
In this particular case, the same result can be obtained by a straightforward application of \eqref{keyLemmaRes1}.


\subsection{Estimates for sequences that skip}\label{secSkip}
In this section some of the estimates from Lemma \ref{lemmaFujiSkip} are explained in more detail. Consider for a fixed $k\in\N$ the set $M_k$ of all sequences of integers
$(j_1,j_2,\ldots,j_J)$ with the property 
$$
0<j_1-1<j_1<j_2-1<j_2<\cdots<j_{J-1}<j_J-1<j_J\leq k.
$$
The set $M_{k+1}$ can now be constructed in the following way: start with the set $M_k$ and add an additional sequence containing just the number $k+1$. Then
add all the sequences of the set $M_{k-1}$ but append to every sequence in this set the number $k+1$ beforehand. This explains the last equality in \eqref{eqKeyLemma1}.

Consider next the inequality
\begin{align*}
&C_{l}^J \overset{\prime}{\sum} |r(\pi_{t_0,t_{j_1}})|_{l'}^{(m)} |r(\pi_{t_{j_1},t_{j_2}})|_{l'}^{(m)}|r(\pi_{t_{j_2},t_{j_3}})|_{l'}^{(0)}\cdots |r(\pi_{t_{j_{J-1}},t_{j_J}})|_{l'}^{(0)} |(q_0+q_1)(\pi_{t_{j_J},t_{k+1}})|_{l'}^{(0)}\\
\lv_{l,l'} & \left(\prod_{n=0}^k \big(1+ c (t_{k+1}-t_0)(t_{n+1}-t_n)\big)\right)-1.\\
\end{align*}
As indicated, use \eqref{keyLemmaRes5} on the first two seminorms of order $m$:
\begin{align*}
|r(\pi_{t_0,t_{j_1}})|_{l'}^{(m)}\leq e_l (t_{j_1-1}-t_0)(t_{j_1}-t_{j_1-1})\leq e_l (t_{k+1}-t_0)(t_{j_1}-t_{j_1-1}),\\
|r(\pi_{t_{j_1},t_{j_2}})|_{l'}^{(m)}\leq e_l (t_{j_2-1}-t_{j_1})(t_{j_2}-t_{j_2-1})\leq e_l (t_{k+1}-t_0)(t_{j_2}-t_{j_2-1}).
\end{align*}
For the final term, use \eqref{keyLemmaRes2}:
\begin{align*}
|(q_0+q_1)(\pi_{t_{j_J},t_{k+1}})|_{l'}^{(0)}\leq b_l.
\end{align*}
For the remaining terms, use \eqref{keyLemmaRes4}:
\begin{align*}
|r(\pi_{t_{j_i},t_{j_{i+1}}})|_{l'}^{(0)}\leq d_l
\begin{cases}
(t_{k+1}-t_0),\\
(t_{j_{i+1}}-t_{j_{i+1}-1}).
\end{cases}
\end{align*}

The final inequality 
\begin{align}
 \left(\prod_{n=0}^k \big(1+ c (t_{k+1}-t_0)(t_{n+1}-t_n)\big)\right)-1\lv_{l,l'} (t_{k+1}-t_0)^2\\
\end{align}
can be shown using induction, cf. \cite{Fujiwara-91}.

\subsection{Some Estimates for Negative Definite Symbols}\label{secEstimates}
The following tools are needed throughout this section, cf. \cite{NIST-10}, \cite{Constantine-96}. \\[0.3cm]
\emph{Fa{\`a} di Bruno formula:} for $\gamma\in\N_0^d$, $f:\R\rightarrow \R$ and $g:\R^{2d}\rightarrow \R$,
\begin{align}\label{eqFaa}
\partial_{x}^{\gamma}(f\circ g)(x)=\sum_{j=1}^{|\gamma|}f^{(j)}\big(g(x)\big)\sum_{\substack{\gamma^1+\ldots+\gamma^j=\gamma\\ \gamma^1,\ldots,\gamma^j\in \N_0^d}}c_{\gamma^1,\ldots,\gamma^j}\prod_{l=1}^j \partial_x^{\gamma^{l}}g(x).
\end{align}
\emph{Exponential function estimate:} for $j=0,1,\ldots$, $s\in (0,\infty)$, there exists a constant $A_j>0$ such that
\begin{align}\label{eqExp}
s^j e^{-s}\leq \left(\frac{j}{e}\right)^l=A_j<\infty.
\end{align}

First, observe that 
\begin{align}
|q_0(\pi_{t_0,t_{k+1}};x,\xi)|\leq C
\end{align}
for some constant $C>0$. For $\alpha,\beta\in\N_0^d$ such that $|\alpha+\beta|>0$, it follows using \eqref{eqFaa} that
\begin{align}
&\partial_{\xi}^{\alpha}D_x^{\beta}q_0(\pi_{t_0,t_{k+1}};x,\xi)\\
=&\sum_{j=1}^{|\alpha+\beta|}q_0(\pi_{t_0,t_{k+1}};x,\xi) \sum_{\substack{\alpha^1+\ldots+\alpha^j=\alpha, \alpha^1,\ldots,\alpha^j\in\N_0^d\\ \beta^1+\ldots+\beta^j=\beta, \beta^1,\ldots,\beta^j\in\N_0^d}}c_{\alpha^1,\beta^1,\ldots,\alpha^j,\beta^j}\prod_{l=1}^j \partial_{\xi}^{\alpha^l}D_x^{\beta^l}\left(-\int_{t_0}^{t_{k+1}} a(\tau;x,\xi)\id \tau\right).
\end{align}
Note that 
\begin{align}\label{eqApp31}
\partial_{\xi}^{\alpha^l}D_x^{\beta^l}\left(-\int_{t_0}^{t_{k+1}} a(\tau;x,\xi)\id \tau\right)=-\int_{t_0}^{t_{k+1}} \frac{\partial_{\xi}^{\alpha^l}D_x^{\beta^l}a(\tau;x,\xi)}{\Re a(\tau;x,\xi)}\Re a(\tau;x,\xi)\id \tau.
\end{align}
and it follows by Assumptions \ref{mainAssumptions} that
\begin{align}
\left|\frac{\partial_{\xi}^{\alpha^l}D_x^{\beta^l}a(\tau;x,\xi)}{\Re a(\tau;x,\xi)}\right|\lv_{\alpha^l,\beta^l} \jb{\xi}^{-\rho_2(|\alpha^l|)}.
\end{align}
Taking into account \eqref{eqExp} it holds that
\begin{align}
\left(\int_{t_0}^{t_{k+1}} \Re a(\tau;x,\xi)\id \tau\right)^j e^{-\int_{t_0}^{t_{k+1}} \Re a(\tau;x,\xi)\id \tau}\leq A_j.
\end{align}
Using in addition the subadditivity of the function $\rho_2$, it follows that 
\begin{align}
\prod_{l=1}^j \left|\partial_{\xi}^{\alpha^l}D_x^{\beta^l}\left(-\int_{t_0}^{t_{k+1}}a(\tau;x,\xi)\id \tau\right)\right|\lv_{\alpha^l,\beta^l}\jb{\xi}^{-\rho_2(|\alpha|)} \left(\int_{t_0}^{t_{k+1}} |\Re a(\tau;x,\xi)|\id \tau\right)^j.
\end{align}
In conclusion,
\begin{align}
&|\partial_{\xi}^{\alpha}D_x^{\beta}q_0(s,t;x,\xi)|\\
\lv_{\alpha,\beta}&\jb{\xi}^{-\rho_2(|\alpha|)}\sum_{j=1}^{|\alpha+\beta|}e^{-\int_{t_0}^{t_{k+1}} \Re a(\tau;x,\xi)\id \tau} \left(\int_{t_0}^{t_{k+1}} |\Re a(\tau;x,\xi)|\id \tau\right)^j\\
\lv_{\alpha,\beta}&\begin{cases}
\jb{\xi}^{-\rho_2(|\alpha|)}\\
(t_{k+1}-t_0)\jb{\xi}^{m-\rho_2(|\alpha|)}
\end{cases}
\end{align}
This proves \eqref{keyLemmaEQ3}. 

In order to prove \eqref{keyLemmaEQ4} note that $|\wt{\alpha}_k|=1$ implies that it sufficient to show the following estimate for all $\alpha,\alpha',\beta\in \N_0^d$, $|\alpha'|=1$:
\begin{align}
\left|\partial_{\xi}^{\alpha}D_x^{\beta}\partial_{\xi}^{\alpha'}e^{-\int_{t_{j-1}}^{t_j}a(\tau;x,\xi)\id \tau}\right|\lv_{\alpha,\beta}\begin{cases}
\jb{\xi}^{-1-\rho_1(|\alpha|)},\\
(t_j-t_{j-1})\jb{\xi}^{m-1-\rho_1(|\alpha|)}.
\end{cases}
\end{align}
But this follows in a straightforward manner from the calculations above.

\end{document}